\documentclass[a4paper,UKenglish,cleveref, autoref, nameinlink, thm-restate]{lipics-v2021}

\graphicspath{{./figures/}}

\usepackage{amsmath}
\usepackage{amsfonts}
\usepackage{amssymb}
\usepackage{amsthm}
\usepackage{graphicx}
\usepackage{todonotes}
\usepackage{boxedminipage}
\usepackage{bm}
\usepackage{subcaption}
\usepackage{xspace}
\usepackage{alphalph}

\usepackage{xcolor,soul}
\Crefname{property}{Property}{Properties}
\Crefname{observation}{Observation}{Observations}
\Crefname{theorem}{Theorem}{Theorems}
\Crefname{section}{Section}{Sections}
\Crefname{figure}{Figure}{Figures}
\Crefname{enumi}{Case}{Cases} 

\definecolor{defblue}{rgb}{0.121,0.47,0.705}
\definecolor{orangered}{rgb}{1,0.271,0}
\DeclareTextFontCommand{\emph}{\color{defblue}\em}

\definecolor{lipicsblue}{rgb}{0.08235294118,0.3098039216,0.537254902}

\hypersetup{colorlinks=true,
    linkcolor=lipicsblue,
    anchorcolor=lipicsblue,
    citecolor=lipicsblue,
    filecolor=lipicsblue,
    menucolor=lipicsblue,
    urlcolor=lipicsblue,
    bookmarksopen=true,
    bookmarksopenlevel=2,
    bookmarksnumbered=true,
    plainpages=false,
    }

\hideLIPIcs

\title{On Large Induced Outerplanar Subgraphs in~$2$-Outerplanar Graphs} 

\author{Marco {D'Elia}}{Roma Tre University, Rome, Italy}{marco.delia@uniroma3.it}{https://orcid.org/0009-0008-6266-3324}{}

\author{Fabrizio Frati}{Roma Tre University, Rome, Italy}{fabrizio.frati@uniroma3.it}{https://orcid.org/0000-0001-5987-8713}{}

\authorrunning{M. {D'Elia} and F. Frati}

\Copyright{Marco D'Elia and Fabrizio Frati}

\ccsdesc[500]{Theory of computation~Design and analysis of algorithms}
\ccsdesc[500]{Mathematics of computing~Combinatorics}
\ccsdesc[500]{Mathematics of computing~Graph theory}
\keywords{Induced graphs, planar graphs, outerplanar graphs} 

\category{} 

\relatedversion{} 

\acknowledgements{}

\nolinenumbers 

\begin{document}

\maketitle

\begin{abstract} 
Borradaile, Le and Sherman-Bennett [{\em Graphs and Combinatorics}, 2017] proved that every~$n$-vertex~$2$-outerplane graph has a set of at least~$2n/3$ vertices that induces an outerplane graph. We identify a major flaw in their proof and recover their result with a different, and unfortunately much more complex, proof.  
\end{abstract}

\section{Introduction}\label{se:introduction}

A famous conjecture of Albertson and Berman~\cite{albertson1979conjecture} states that every~$n$-vertex planar graph has a set with at least~$n/2$ vertices that induces a forest. The conjecture is the strongest possible, as there exist~$n$-vertex planar graphs that have no set with more than~$n/2$ vertices inducing a forest (for example, a planar graph composed of~$n/4$ vertex-disjoint~$K_4$'s). The conjecture has been proved for many classes of planar graphs, including outerplanar graphs~\cite{hosono1990induced}, cubic planar graphs~\cite{bhs-lb-87}, planar graphs with girth larger than~$3$~\cite{DBLP:journals/dam/DrossMP19,DBLP:journals/ipl/FertinGR02, DBLP:journals/jgt/KellyL18,DBLP:journals/dmtcs/KowalikLS10,DBLP:journals/gc/Le18,DBLP:journals/gc/Salavatipour06},~$n$-vertex planar graphs with at most~$\lfloor 7n/4 \rfloor$ edges~\cite{DBLP:journals/jgt/ShiX17}, bipartite planar graphs~\cite{DBLP:journals/gc/AkiyamaW87,wxy-ifbpg-17}, and~$2$-outerplanar graphs~\cite{DBLP:journals/gc/BorradaileLS17}, often with bounds that are even stronger than~$n/2$. A positive solution to Albertson and Berman's conjecture would imply, without using the~$4$-color theorem, the existence of an independent set with~$n/4$ vertices in any~$n$-vertex planar graph. Over the years, the conjecture has gained a prominent place in graph theory, many related conjectures have been stated, and many related results have been discovered, see, e.g., \cite{DBLP:journals/gc/AkiyamaW87,DBLP:journals/jgt/AngeliniEFG16,DBLP:journals/gc/BorradaileLS17,DBLP:journals/combinatorics/ChappellP13,DBLP:journals/dm/DrossMP19,DBLP:journals/corr/abs-2506-10471,DBLP:journals/jgt/GuKOQZ22,DBLP:journals/siamdm/KangMM12,DBLP:journals/corr/abs-2303-13655,DBLP:journals/combinatorics/LukotkaMZ15,DBLP:journals/gc/Pelsmajer04,DBLP:journals/jgt/Poh90}.

The best known lower bound for the size of a set of vertices that induces a forest in any~$n$-vertex planar graph is~$2n/5$. This is a consequence of a result of Borodin~\cite{borodin1976proof}, which asserts that every~$n$-vertex planar graph has an acyclic~$5$-coloring; in such a coloring, the two largest color classes induce a forest with at least~$2n/5$ vertices. Since there exist planar graphs with no acyclic~$4$-coloring, this line of attack to the Albertson and Berman's conjecture cannot yield bounds better than~$2n/5$. A promising alternative direction is the one of finding, in any~$n$-vertex planar graph~$G$, a large induced subgraph which belongs to a restricted planar graph class, for which it is known that a large induced forest can always be found. In particular, it is known that every~$n$-vertex outerplanar graph has an induced forest with at least~$2n/3$ vertices~\cite{hosono1990induced}. Thus, every~$n$-vertex planar graph has an induced forest with at least~$2cn/3$ vertices, where~$c$ is the maximum constant such that every~$n$-vertex planar graph has an induced outerplanar graph with at least~$cn$ vertices. Hence, any bound for~$c$ larger than~$3/5$ would improve the currently best known bound of~$2n/5$ for the Albertson and Berman's conjecture. It is not hard to observe that~$c\leq 2/3$ (as one of the two~$6$-vertex maximal planar graphs has a largest induced outerplanar graph with~$4$ vertices only) and that~$c\geq 1/2$ (this comes from a decomposition in {\em outerplanar layers} of the given graph). The best known lower bound for~$c$ is~$11/21$, due to Bose, Dujmovi\'c, Houdrouge, Morin, and Odak~\cite{DBLP:journals/corr/abs-2312-03399}. In fact, the~$11n/21$ lower bound~\cite{DBLP:journals/corr/abs-2312-03399}, as well as the previous~$n/2$ lower bound~\cite{DBLP:journals/jgt/AlbertsonBHT90,DBLP:journals/jgt/AngeliniEFG16} holds for the size of an {\em outerplane graph} that one is guaranteed to find in an~$n$-vertex planar graph. That is, fix a plane embedding of the~$n$-vertex planar graph; then the found subset induces an embedded planar graph in which every vertex is incident to the outer face. This property allows the result to be used for graph drawing applications, see~\cite{DBLP:journals/jgt/AngeliniEFG16,DBLP:journals/corr/abs-2312-03399,DBLP:journals/corr/abs-2506-10471}.  

A \emph{$2$-outerplanar graph} is a planar graph that admits a plane embedding such that, by removing the vertices incident to the outer face, one gets an outerplanar graph; the~$2$-outerplanar graph, equipped with such an embedding, is called \emph{$2$-outerplane graph}. The class of~$2$-outerplanar graphs has been studied by Borradaile, Le and Sherman-Bennett~\cite{DBLP:journals/gc/BorradaileLS17} as an interesting benchmark for the maximum size of an induced forest or induced outerplane graph that one is guaranteed to find in a planar graph. They proved the best possible results for both problems: Every~$n$-vertex~$2$-outerplane graph has a set of~$n/2$ vertices that induces a forest (thus proving Albertson and Berman's conjecture for the~$2$-outerplanar graphs) and has a set of~$2n/3$ vertices that induces an outerplane graph. Unfortunately, we point out a major flaw in their proof of the second result. What makes the flaw major is that the general strategy they use, that is, the set of vertices inducing an outerplane graph should contain all the vertices that are not incident to the outer face of the given~$2$-outerplane graph, plus, possibly, some additional vertices, cannot yield the desired bound. Indeed, we show that, for arbitrarily large values of~$n$, there exists an~$n$-vertex~$2$-outerplane graph such that any set of vertices that contains all the vertices that are not incident to the outer face and induces an outerplanar graph has at most~$7n/11$ vertices. 

The aim of this paper is to recover the following result.

\begin{theorem} \label{th:main}
Let~$G$ be an~$n$-vertex~$2$-outerplane graph. There exists a set~$\mathcal I\subseteq V(G)$ with~$|\mathcal I|\geq 2n/3$, such that the subgraph of~$G$ induced by~$\mathcal I$ is an outerplane graph. 
\end{theorem}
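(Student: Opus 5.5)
We will prove the theorem by induction on $n$, working with a minimal counterexample $G$. First we reduce to the case where $G$ is a triangulated $2$-outerplane graph, in the appropriate sense. Adding edges between two outer vertices or between two inner vertices inside a face preserves $2$-outerplanarity, and it only makes inducing an outerplane graph harder. So we may assume every internal face is a triangle and the inner graph $H$ (the graph induced by the non-outer vertices) is a maximal outerplane graph on each of its components. Let $O$ be the set of outer vertices and $I$ the set of inner ones, with $|O|=a$ and $|I|=b$. The set $\mathcal I$ induces an outerplane graph exactly when no vertex of $\mathcal I$ is enclosed by a cycle of $G[\mathcal I]$ in the inherited embedding. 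The remark after the abstract tells us that keeping all of $I$ cannot work in general. Our set $\mathcal I$ will therefore mix vertices of $O$ and $I$, and the counting has to be done with a discharging or weight argument rather than a single global choice.

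The first key step is a family of reducible configurations that lets the induction go through. Typical examples are an outer vertex of degree at most $3$, an inner vertex adjacent to few outer vertices, and a separating triangle or separating $4$-cycle splitting $G$ into two $2$-outerplane pieces. For each configuration we delete a set $S$ of $s$ vertices, apply induction to $G-S$, and extend the resulting set by at least $2s/3$ vertices of $S$. We must check that the extension keeps every vertex on the outer face. The extension step is the delicate one: an added vertex can close a cycle around a previously outer vertex. We will control this by choosing $S$ so that its neighbourhood in $G-S$ lies on a single face boundary, and, where needed, by strengthening the induction hypothesis. The strengthened form requires that a prescribed outer vertex or outer edge can be forced into or out of $\mathcal I$ at a small cost, which makes the gluing at separating cycles work.

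The second step handles irreducible graphs directly. In such graphs each component $C$ of $H$ is a maximal outerplane graph surrounded by an annulus of triangles towards $O$. We partition the vertices into groups, for instance each inner vertex together with the outer vertices "charged" to it through the annulus faces, and pick locally a $2/3$ fraction in each group. There are two natural baseline choices. One takes all of $I$ together with an independent-like selection of $O$, chosen so that no selected outer vertex becomes enclosed. The other takes all of $O$ together with a $2$-colour class selection inside $H$: since $H$ is outerplanar and hence $3$-colourable, two colour classes give an induced forest on at least $2b/3$ vertices, and we keep whichever of these sits on the outer face appropriately. Averaging such strategies, or taking the better of three strategies from a $3$-colouring, gives $2n/3$ whenever the annulus structure is regular. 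The irreducible configurations are exactly what prevents irregularities.

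The main obstacle will be the interaction in the annulus between the outer cycle and $H$. An outer vertex $v$ whose neighbours span a long path of $H$ becomes enclosed as soon as $v$ and two of its inner neighbours are kept together with a path in $H$ joining them. This is precisely the phenomenon behind the $7n/11$ example. My first attempt will be to treat each outer vertex by the fan of inner vertices it sees, and to prove a local lemma: in every fan one can drop one vertex out of every three, either the outer vertex itself or an inner vertex cutting the fan, consistently across neighbouring fans. If the consistency fails, I would fall back on a more global charging argument along the outer cycle, exploiting that consecutive fans share exactly one inner vertex.
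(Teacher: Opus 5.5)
This is a plan rather than a proof, and its load-bearing steps are either false or missing. The skeleton matches the paper: triangulate, induct, and peel off local configurations such as outer vertices of degree at most $3$ (the paper's Cases~1 and~2). But your reduction that the inner graph $H$ is ``a maximal outerplane graph on each of its components'' is not available. Once $G$ is internally triangulated, every face at a vertex of $L_2$ is a triangle, so no edge with an endpoint in $L_2$ can be added at all. Hence a cutvertex of a component of $G[L_2]$ survives any augmentation, e.g.\ the middle vertex $w$ of an inner path $uwv$ where two edges from $w$ to $L_1$ separate $u$ from $v$. These cutvertices are exactly the hard regime. The paper's Remark notes that the Borradaile et al.\ strategy already succeeds when the components of $G[L_2]$ are biconnected, and Cases~5--13 (rooted block-cutvertex trees of terminal components, extremal leaves, cushy versus pesky blocks) exist to handle the non-biconnected ones. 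Separately, an edge between two inner vertices drawn in a face with an outer vertex on its boundary can enclose a third inner vertex; the paper's augmentation lemma first joins outer vertices to everything on their faces to rule this out.

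Your baselines for the irreducible case also fail. If all of $O$ is kept, the boundary of the outer face of $G$ lies in $G[\mathcal I]$ and encloses every kept inner vertex, so that strategy yields only $|O|$ vertices. Keeping all of $I$ fails by the $7n/11$ example, so there is nothing to average. A small slip in your last paragraph: it is not the outer vertex $v$ that becomes enclosed (outer vertices never can be) but the inner vertices of its fan between the two kept neighbours.

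The two ideas that actually carry the proof are also absent. For gluing you appeal to a strengthened hypothesis letting a prescribed outer vertex or edge be forced in or out ``at a small cost''. This cannot be free in general: forcing both ends of an edge of a triangle out leaves one vertex. Since $2n/3$ is attained (the octahedron has no induced outerplanar subgraph on $5$ of its $6$ vertices), such costs cannot be absorbed by slack and must be repaid exactly, which you do not address. The paper never strengthens the hypothesis. Instead it does the following:
\begin{itemize}
\item It removes a piece $P$ meeting the rest of $G$ only in a set $T$ of at most three attachment vertices ($\{x,y\}$ in Case~4, $\{c_B,\ell_B,r_B\}$ in Case~5, $\{c_P,\ell_P,r_P\}$ in Case~13), and leaves a subset $T'\subseteq T$ in the smaller graph.
\item It proves for $P$ alone a set $\mathcal I_P$ disjoint from $T$ with $|\mathcal I_P|\geq 2(|V(P)|-|T'|)/3$ such that $G[\mathcal I_P\cup T']$ is outerplane (Properties~A--D in Lemmas~\ref{le:biconnectedK}, \ref{le:cushy} and~\ref{le:final-case}).
\item Whatever the induction decides about $T'$, Observation~\ref{obs:union} then glues the two sets.
\end{itemize}

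Finally, you give no argument that a graph avoiding your configurations has exploitable structure, and the fan lemma is only conjectured, with an unspecified fallback. In the paper this is where the work lies. Working at a leaf of the rooted weak dual of $G[L_1]$ and an extremal leaf block of its terminal component, Lemmas~\ref{le:case-distinction-blocks} and~\ref{le:case13} show that if Cases~1--12 fail, there is a non-trivial block $B_P$ all of whose grandchildren in the block-cutvertex tree are trivial or pesky leaves attached through prescribed triangular faces. Explicit charging schemes then certify the ratio $2/3$ for each piece: the stock of charge $\gamma_j$ in Lemma~\ref{le:cushy} and the active-edge invariants in Lemma~\ref{le:final-case}.
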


The rest of the paper is organized as follows. In~\cref{se:preliminaries} we present some definitions and preliminaries. In~\cref{se:flaw} we discuss more in detail the flaw in the algorithm by Borradaile, Le and Sherman-Bennett~\cite{DBLP:journals/gc/BorradaileLS17}. In~\cref{se:algorithm} we show a new proof for~\cref{th:main}. Finally, in~\cref{se:conclusions} we conclude and present some open problems.

\section{Preliminaries} 
\label{se:preliminaries}

We consider simple and finite graphs, and we use standard terminology from Graph Theory~\cite{Diestelbook}. We denote by~$V(G)$ the vertex set of a graph~$G$. Given a set~$\mathcal I \subseteq V(G)$, we denote by~$G[\mathcal I]$ the subgraph of~$G$ \emph{induced by~$\mathcal I$}, that is, the graph whose vertex set is~$\mathcal I$ and whose edge set consists of all edges~$uv$ in~$G$ such that~$u,v \in \mathcal I$. We denote by~$\delta_G(v)$ the \emph{degree} of a vertex~$v$ in~$G$, that is, the number of edges incident to~$v$. 

A \emph{drawing} of a graph maps each vertex to a point in the plane and each edge to a Jordan curve connecting its endpoints. A drawing is \emph{planar} if no two edges cross, except possibly at common endpoints. A planar drawing partitions the plane into connected regions, called \emph{faces}. The unbounded face is called the \emph{outer face}, while the other faces are \emph{internal}. 
A planar drawing is \emph{outerplanar} if every vertex is incident to the outer face.
An \emph{outerplanar graph} is a graph that admits an outerplanar drawing.
We call \emph{outerplane embedding} the topological information associated with such a drawing, and we call \emph{outerplane graph} a graph together with an outerplane embedding. 
A planar drawing is \emph{$2$-outerplanar} if removing all vertices incident to the outer face results in an outerplanar drawing.
A \emph{$2$-outerplanar graph} is a graph that admits a~$2$-outerplanar drawing. We call \emph{$2$-outerplane embedding} the topological information associated with such a drawing, and we call \emph{$2$-outerplane graph} a graph together with a~$2$-outerplane embedding. An induced subgraph~$G[\mathcal I]$ of a~$2$-outerplane graph~$G$ inherits the embedding of~$G$, thus we say that~$G[\mathcal I]$ is outerplane if restricting the embedding of~$G$ to the vertices and edges of~$G[\mathcal I]$ results in an outerplane graph. A~$2$-outerplane graph is \emph{internally-triangulated} if all its internal faces are~$3$-cycles. Given a~$2$-outerplane graph~$G$, we denote by~$L_1$ the set of vertices of~$G$ that lie on the outer face, and by~$L_2$ the set of vertices of~$G$ that do not lie on the outer face, i.e.,~$L_2 = V(G) - L_1$. In our proofs we will exploit several times the following observation. We say that two outerplane subgraphs of a~$2$-outerplane graph are \emph{each in the outer face of the other one} if every vertex and edge of each graph is in the outer face or on the boundary of the outer face of the other graph.

\begin{observation}\label{obs:union}
Let~$G_1$ and~$G_2$ be two outerplane subgraphs of a~$2$-outerplane graph such that: (i) each of~$G_1$ and~$G_2$ is in the outer face of the other one; and (ii)~$G_1$ and~$G_2$ do not share any vertex, or share a single vertex, or share a single edge. Then the subgraph of~$G$ composed of~$G_1$ and~$G_2$ is outerplane.
\end{observation}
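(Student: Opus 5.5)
The plan is to argue by contradiction using the standard characterization of vertices not incident to the outer face. In a plane graph~$H$, a vertex~$v$ is not incident to the outer face of~$H$ if and only if~$v$ lies in the interior of some cycle of~$H$. Let~$H$ be the subgraph composed of~$G_1$ and~$G_2$, with the embedding inherited from~$G$, and suppose some vertex~$v$ of~$H$, say of~$G_1$, is not incident to the outer face of~$H$. Then there is a cycle~$C$ of~$H$ with~$v$ strictly inside it. We will show that no such cycle can exist.

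\textbf{Step 1: $C$ must use both graphs.} First, $C$ is not a cycle of~$G_1$, since~$G_1$ is outerplane and therefore no vertex of~$G_1$ lies strictly inside a cycle of~$G_1$. Second, $C$ is not a cycle of~$G_2$. Indeed, by condition~(i), $v$ lies in the outer face of~$G_2$ or on its boundary, so $v$ is not strictly inside any cycle of~$G_2$.

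\textbf{Step 2: the shape of~$C$.} By Step~1, $C$ contains edges of both~$G_1$ and~$G_2$. Each time~$C$ switches from edges of one graph to edges of the other, it passes through a vertex shared by~$G_1$ and~$G_2$, and there are at least two such switch points. By condition~(ii), the graphs share at most one vertex, or exactly the two endpoints~$u,w$ of a single common edge~$uw$. So the only possible case is the shared edge, and then~$C$ is the union of a $u$--$w$ path~$P_1$ in~$G_1$ and a $u$--$w$ path~$P_2$ in~$G_2$. Neither path is the edge~$uw$ itself, since otherwise~$C$ would lie entirely in one graph. Hence $C_1=P_1+uw$ is a cycle of~$G_1$ and $C_2=P_2+uw$ is a cycle of~$G_2$.

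\textbf{Step 3: theta-graph case analysis.} The three internally disjoint paths~$P_1$, $P_2$ and~$uw$ form a theta graph. Its three cycles~$C$, $C_1$, $C_2$ stand in one of two relations.
\begin{enumerate}
\item The interiors of~$C_1$ and~$C_2$ are disjoint. Then the interior of~$C$ is the union of the interiors of~$C_1$ and~$C_2$ together with the relative interior of the edge~$uw$. The vertex~$v$ does not lie on~$uw$, so it lies strictly inside~$C_1$ or strictly inside~$C_2$. The first contradicts that~$G_1$ is outerplane. The second contradicts that~$G_1$ is in the outer face of~$G_2$.
\item One of~$C_1$, $C_2$ contains the other. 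Say~$C_2$ lies in the closed interior of~$C_1$; the other case is symmetric. Then the internal vertices and edges of~$P_2$, which are elements of~$G_2$ not on~$C_1$, lie strictly inside the cycle~$C_1$ of~$G_1$. This contradicts condition~(i).
\end{enumerate}
In all cases we reach a contradiction. Hence every vertex of~$H$ is incident to the outer face of~$H$, so~$H$ is outerplane.

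\textbf{Main obstacle.} The delicate part is Step~3. It needs the topological fact about how the interiors of the three cycles of a theta graph relate; this follows from the Jordan curve theorem. It also needs care in reading condition~(i) as forbidding elements of one graph from lying strictly inside a cycle of the other, while still allowing them to lie on the boundary of the outer face, as the shared edge does.
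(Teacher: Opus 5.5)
Your proof is correct. The paper gives no proof of this observation; it is stated as self-evident and then used throughout. So there is no argument of the authors' to compare against, and your proof supplies the justification the paper leaves implicit. It does so in three steps: reducing to a cycle $C$ of the union that strictly encloses a vertex; counting switch points to force $C$ through exactly the endpoints $u,w$ of the shared edge; and analysing the theta graph formed by $P_1$, $P_2$ and $uw$.

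One small tidy-up in Step~2. If $uw$ lies on $C$, it belongs to both graphs, so ``switching'' needs a convention. For example, count $uw$ as an edge of $G_1$ when locating switch points. Your later remark that neither path equals $uw$ then shows $uw\notin E(C)$, which is what makes $P_1$, $P_2$, $uw$ internally disjoint.

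Your argument also shows two things the paper leaves implicit:
\begin{itemize}
\item The ``single edge'' clause of (ii) is exactly what the theta-graph step uses. If the graphs shared two vertices but not an edge between them, a cycle using edges of both graphs could enclose a vertex while (i) still holds.
\item The $2$-outerplanarity of the ambient graph is never used, so the statement holds for subgraphs of any plane graph.
\end{itemize}
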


A \emph{cutvertex} of a connected graph~$G$ is a vertex whose removal disconnects~$G$. A connected graph is \emph{biconnected} if it contains no cutvertex. A \emph{biconnected component} (or \emph{block}) of a graph~$G$ is a maximal (in terms of vertices and edges) biconnected subgraph of~$G$. A biconnected graph or a block is \emph{trivial} if it is a single edge, and \emph{non-trivial} otherwise. For a connected graph~$G$, the \emph{block-cutvertex tree}~$T_G$ is a tree that describes the arrangement of biconnected components of~$G$. It contains a B-node for each block of~$G$ and a C-node for each cutvertex of~$G$. A B-node~$B$ and a C-node~$c$ are adjacent in~$T_G$ if the cutvertex corresponding to~$c$ is a vertex of the block corresponding to~$B$. Note that the leaves of~$T_G$ are all B-nodes. We often identify a cutvertex with its corresponding C-node and a block with its corresponding B-node. 

We are going to exploit the following lemma.

\begin{lemma} \label{le:augmentation}
Let~$G$ be a~$2$-outerplane graph with~$n\geq 3$ vertices. It is possible to add edges to~$G$ so that it becomes an internally-triangulated~$2$-outerplane non-trivial biconnected graph.
\end{lemma}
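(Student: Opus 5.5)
The plan is to keep the given embedding of~$G$ fixed and add edges in three stages, checking after each stage that every vertex of~$L_1$ is still on the outer face and every vertex of~$L_2$ is still not. That pair of conditions is exactly what is needed for the result to remain a~$2$-outerplane graph with the same partition~$V(G)=L_1\cup L_2$. Adding edges never makes an internal vertex external. So the danger at every step is only that an edge might cut off an outer vertex from the outer face, or might destroy outerplanarity of the subgraph induced by~$L_2$.

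\textbf{Stage one (connectivity).} If~$G$ is disconnected, I will pick two components that are both incident to a common face~$f$. I choose vertices on~$f$ from each, preferring vertices of~$L_1$ when~$f$ is the outer face, and join them by an edge routed inside~$f$. If~$f$ is the outer face, the two joined components become adjacent along the outer boundary and no vertex leaves the outer face.

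If~$f$ is an internal face, a component lying inside~$f$ consists only of~$L_2$ vertices of~$G$. I instead connect an~$L_2$ vertex on its outer boundary to a vertex on the boundary of~$f$. This keeps the inner vertices inside. The induced graph on~$L_2$ stays outerplane because the new edge lies in a face of~$G[L_2]$ that contains no~$L_1$ vertices on the relevant side, so both endpoints remain on the outer face of~$G[L_2]$. When that fails, I will instead pick the endpoint on~$f$ to be in~$L_1$.

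\textbf{Stage two (biconnectivity).} While a cutvertex~$c$ exists, I take two consecutive neighbours~$u,w$ of~$c$ in the rotation at~$c$ that lie in different blocks, and add the edge~$uw$ inside the face containing the angle~$u c w$. If~$uw$ is already an edge, I use a different consecutive pair; since the graph has~$n\geq 3$ vertices and is simple, such a pair exists. If the face is the outer face, I route~$uw$ so that~$c$ is separated from the outer face only if~$c\in L_2$. I handle this by choosing the pair on the outer walk that keeps~$c$ on the outer face, namely the two neighbours adjacent to~$c$ along a different occurrence of~$c$ on the outer walk. This reduces the number of blocks. A small case check, which I expect to be the main obstacle of the stage, is needed to guarantee that some valid choice always preserves both the~$L_1$/$L_2$ partition and the outerplanarity of~$G[L_2]$.

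\textbf{Stage three (triangulation).} With~$G$ biconnected, every face is bounded by a cycle. For each internal face of length at least~$4$ I add a chord. If the face has two non-adjacent vertices of~$L_1$, I do not need to prefer that chord, since any chord inside an internal face changes no outer-face incidences. The only constraint is that the chord must not create a forbidden configuration in~$G[L_2]$, that is, it must not enclose an~$L_2$ vertex inside a cycle of~$L_2$ vertices.

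I plan to choose the chord to be incident to an~$L_1$ vertex of the face whenever one exists. Then the chord is not an edge of~$G[L_2]$ and cannot affect its outerplanarity. If all vertices of the face lie in~$L_2$, then the face is a face of~$G[L_2]$ containing no vertices inside. It is not the outer face of~$G[L_2]$ in a harmful way, because all its vertices stay on the outer face of~$G[L_2]$ by the fan-chord argument: chords of a face whose boundary vertices are already all incident to the outer face of~$G[L_2]$ keep them so.

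Repeating this yields an internally-triangulated biconnected~$2$-outerplane graph, which is non-trivial since~$n\geq 3$. I expect the case analysis in Stage two, at cutvertices lying on the outer face, to be the delicate part.
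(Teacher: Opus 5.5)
Your Stage three skips the one case that needs an argument. You treat two kinds of faces: faces where a chord at an $L_1$ vertex can be added, and faces whose vertices all lie in $L_2$. But a face of length at least $4$ can contain $L_1$ vertices each of which is already adjacent, through edges outside the face, to every other vertex of the face. Then only $L_2$--$L_2$ chords are available.

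Here is an example. Take the outer cycle $v,p,b,q$, inner vertices $a,c$, and edges $va,ab,bc,cv,pa$, with $vb$ drawn between $c$ and $q$. The face bounded by $v,a,b,c$ then admits only the chord $ac$. Your fan-chord reasoning does not apply here, because such a face need not lie inside any cycle of $G[L_2]$; in the example it lies in the outer face of $G[L_2]$.

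$L_2$--$L_2$ chords in such faces are not automatically safe. Suppose $v\in L_1$ and $a,b,c\in L_2$ bound a face $v,a,b,c$, and $a,b,c,d$ is an empty $L_2$ cycle on the far side of $b$. Then the chord $ac$ closes the cycle $a,c,d$ around $b$. Your rule would pick $vb$ there, but the example shows that the forced case needs its own proof.

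The missing ingredient is the argument of the paper's Step~4. Once every $L_1$ vertex on a face $f$ is adjacent to all vertices of $f$, any chord $v_1v_2$ of $f$ is safe:
\begin{itemize}
\item If it closed a cycle of $G[L_2]$ enclosing a vertex, take such a cycle $\mathcal C$ minimal.
\item Minimality puts the enclosed vertex $v$ on the part of the boundary of $f$ that lies inside $\mathcal C$.
\item An $L_1$ vertex $u$ on the complementary part would be separated from $v$ by $\mathcal C$, so $uv\notin E(G)$, contradicting the hypothesis.
\item Hence that complementary part lies in $L_2$. Together with $\mathcal C-v_1v_2$ it gives a closed walk of the original $G[L_2]$ around $v$, a contradiction.
\end{itemize}

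You also defer the ``small case check'' of Stage two, and you should write it out; it does go through. If $u,w$ lie in different blocks at $c$, they lie in different components of $G-c$. So $uw$ is never already an edge, and your fallback to a different pair is moot. For the same reason, if the angle is on the outer face, the outer walk passes through $c$ at least twice, so $c$ keeps an outer angle. If the angle is in an internal face, every new cycle through $uw$ passes through $c$. Modulo $2$, such a cycle is the sum of the empty triangle $ucw$ and two cycles of $G[L_2]$ through $c$, so it encloses no vertex.

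In Stage one your justification is muddled but unnecessary: an edge joining two different components creates no cycle, so no incidence changes.

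Your route differs from the paper's. The paper never removes internal cutvertices explicitly. It only turns the outer boundary into a cycle, then exhaustively adds $L_1$-incident edges inside internal faces, which absorbs internal components and cutvertices. Only after that does it add $L_2$--$L_2$ chords, using the argument above.
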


\begin{proof}
Throughout the proof, let~$f_G$ denote the outer face of~$G$ and~$W_G$ the boundary of~$f_G$. We add edges to~$G$ in~$4$ steps.

{\bf (Step 1)} First, we augment~$G$ so that~$W_G$ is connected. This is done as follows. While there exist two connected components~$C_1$ and~$C_2$ that contain vertices~$v_1$ and~$v_2$ in~$L_1$, respectively, we add the edge~$v_1v_2$ to~$G$ and embed it in~$f_G$. This augmentation maintains the embedding~$2$-outerplane, since the edge~$v_1v_2$ does not prevent any vertex from being incident to~$f_G$ and is incident to two vertices in~$L_1$. Once Step 1 is concluded,~$W_G$ is connected.

{\bf (Step 2)} Second, we augment~$G$ so that~$W_G$ is a cycle. This is done as follows. We traverse~$W_G$ in clockwise direction. Whenever we encounter two consecutive edges~$v_1u$ and~$uv_2$ that belong to distinct blocks~$B_1$ and~$B_2$ of~$G$, respectively, we add the edge~$v_1v_2$ to~$G$ and embed it in~$f_G$, so that the path~$v_1uv_2$ is replaced by the edge~$v_1v_2$ in~$W_G$. This augmentation maintains the embedding~$2$-outerplane, since the edge~$v_1v_2$ does not prevent any vertex from being incident to~$f_G$ and is incident to two vertices in~$L_1$. In particular, an incidence of~$u$ on~$f_G$ is turned into an incidence of~$u$ on the internal face delimited by the cycle~$v_1uv_2$; however, since the edges~$v_1u$ and~$uv_2$ are incident to~$f_G$, the edges~$uw_1$ and~$uw_2$ that respectively follow and precede~$v_1u$ and~$uv_2$ in clockwise direction along the boundaries of~$B_1$ and~$B_2$ (such edges are again~$uw_1$ and/or~$uw_2$ if~$B_1$ and/or~$B_2$ are trivial), are also incident to~$f_G$, hence~$u$ remains incident to~$f_G$. Once Step 2 is concluded,~$W_G$ is a cycle.

{\bf (Step 3)} Third, while~$G$ contains an internal face~$f$ whose boundary contains two vertices~$v_1$ and~$v_2$ such that at least one of them is in~$L_1$ and such that the edge~$v_1v_2$ does not belong to~$G$, we add to~$G$ the edge~$v_1v_2$ and embed it inside~$f$. This augmentation maintains the embedding~$2$-outerplane, since the edge~$v_1v_2$ does not prevent any vertex from being incident to~$f_G$, given that it is embedded inside an internal face of~$G$, and since it is incident to at least one vertex in~$L_1$. Once Step 3 is concluded, every vertex in~$L_1$ that is on the boundary of an internal face~$f$ of~$G$ is adjacent to all the vertices on the boundary of~$f$.

\begin{figure}[tb]
\centering
\includegraphics[scale=.8]{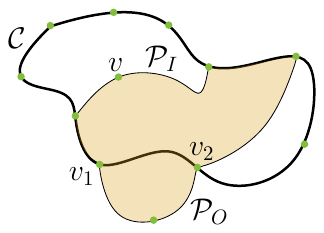}
\caption{Illustration for the correctness of Step 4 in the proof of \cref{le:augmentation}. The cycle~$\mathcal C$ is represented by a thick line and the face~$f$ is shaded orange.}
\label{fig:augmentation}
\end{figure}

{\bf (Step 4)} Finally, for every internal face~$f$ whose boundary contains at least~$4$ distinct vertices, we add an edge between two vertices~$v_1$ and~$v_2$ that are incident to~$f$ and that are not already adjacent in~$G$ (note that~$v_1$ and~$v_2$ are in~$L_2$, as otherwise Step 3 would not be concluded). The existence of such a pair of vertices is a standard argument: The vertices incident to~$f$ cannot induce a clique, since they are at least~$4$ and since they induce an outerplanar subgraph of~$G$. We prove that this augmentation maintains the embedding~$2$-outerplane. Suppose, for a contradiction, that this is not the case; refer to~\cref{fig:augmentation}. The edge~$v_1v_2$ does not prevent any vertex from being incident to~$f_G$, since it is embedded inside an internal face of~$G$. Hence, if the embedding is not~$2$-outerplane, there exists a cycle~$\mathcal C$ in~$G[L_2]$ such that the bounded region~$\mathcal R$ delimited by~$\mathcal C$ contains a vertex~$v$ in its interior. Suppose that~$\mathcal C$ is {\em minimal}, i.e., no cycle~$\mathcal C'\neq \mathcal C$ is such that the bounded region delimited by~$\mathcal C'$ contains~$v$ in its interior and all the edges of~$\mathcal C'$ are in the closure of~$\mathcal R$. Indeed, if such a cycle~$\mathcal C'$ exists, we can use it in place of~$\mathcal C$; the repetition of this argument eventually leads to a minimal cycle~$\mathcal C$.  Note that~$v_1v_2$ is an edge of~$\mathcal C$, as otherwise~$G$ would not be~$2$-outerplane even before the augmentation. Also,~$f$ is externally delimited by a cycle which consists of two paths between~$v_1$ and~$v_2$, one, say~$\mathcal P_I$, whose vertices and edges are all in the closure of~$\mathcal R$ and one, say~$\mathcal P_O$, whose vertices and edges are all in the closure of the complement of~$\mathcal R$. We have that~$v$ is a vertex of~$\mathcal P_I$, as otherwise~$\mathcal P_I$, together with the edge~$v_1v_2$ or with the path~$\mathcal P$ obtained from~$\mathcal C$ by removing the edge~$v_1v_2$, would create a cycle~$\mathcal C'$ such that the bounded region delimited by~$\mathcal C'$ contains~$v$ in its interior and all the edges of~$\mathcal C'$ are in the closure of~$\mathcal R$, contradicting the minimality of~$\mathcal C$. This implies that~$\mathcal P_O$ consists only of vertices in~$L_2$, as if it contained a vertex~$u$ in~$L_1$, then the edge~$uv$ would not belong to~$G$, since~$v$ and~$u$ are one inside and one outside~$\mathcal C$, hence we would be in Step 3 and the edge~$uv$, rather than~$v_1v_2$, would be added to~$G$. Then the paths~$\mathcal P$ and~$\mathcal P_O$ define a walk in~$G[L_2]$ which bounds a region which contains~$v$ in its interior. Hence,~$G$ is not~$2$-outerplane before the augmentation, a contradiction. 

Once Step 4 is concluded,~$G$ is internally-triangulated. Since~$n\geq 3$ and~$W_G$ is a cycle, we have that~$G$ is a non-trivial biconnected graph and concludes the proof of the lemma.  
\end{proof}

\section{On the Proof of~\cref{th:main} by Borradaile, Le and Sherman-Bennett} \label{se:flaw}

In this section, we discuss the flaw in the proof of~\cref{th:main} presented by Borradaile, Le and Sherman-Bennett \cite{DBLP:journals/gc/BorradaileLS17}. There are three distinct levels for this discussion.

First, at the highest level, their general strategy, which they describe as ``{\em To find the vertices inducing a large
outerplanar graph in~$G$, we delete vertices in~$L_1$ until all vertices in~$L_2$ are “exposed” to the external face}'' does not, unfortunately, allow one to find a set of~$2n/3$ vertices that induces an outerplane graph. Indeed, consider the~$2$-outerplane graph~$G$ in~\cref{fig:flaw1}. Each vertex in~$L_{1}$ prevents a distinct vertex in~$L_2$ from being incident to the outer face. Thus, if a set of vertices in~$G$ induces an outerplane graph and contains all the vertices in~$L_2$, it does not contain any of the vertices in~$L_1$, and thus it contains at most~$7$ of the~$11$ vertices of~$G$. Note that~$7/11<2/3$. Furthermore, this counter-example holds even if one wants to find an induced {\em outerplanar}, rather than {\em outerplane}, graph in~$G$. In fact, any vertex of~$L_1$ induces, together with~$4$ vertices in~$L_2$, a subgraph of~$G$ containing~$K_{2,3}$. Replicating this example as shown in~\cref{fig:flaw2} allows us to state the following.

\begin{theorem} \label{th:flaw}
For every positive~$n$ multiple of~$11$, there exists an~$n$-vertex~$2$-outerplane graph~$G$ such that every set of vertices in~$G$ which induces an outerplanar graph and includes all the vertices not incident to the outer face of~$G$ contains at most~$\frac{7n}{11}$ vertices.    
\end{theorem}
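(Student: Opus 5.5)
The plan is to make explicit the $11$-vertex gadget of \cref{fig:flaw1}, prove the claim for a single copy, and then combine $n/11$ copies as in \cref{fig:flaw2} so that the claim holds copy by copy.

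I would describe the gadget $H$ as follows. It has $|L_2|=7$ and $|L_1|=4$, and $G[L_2]$ is an outerplane graph. Each vertex $x\in L_1$ is placed so that, in the induced embedding, it lies in a region where, together with some edges among $L_2$, it encloses a distinct vertex $y_x\in L_2$. I would realize this with $x$ adjacent to four vertices of $L_2$ that form a path (or a fan) around $y_x$.

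The first step is to show that any set $\mathcal I\supseteq L_2$ inducing an outerplanar graph avoids $L_1$ entirely. Fix $x\in L_1$. The induced graph $H[L_2\cup\{x\}]$ contains a subgraph homeomorphic to $K_{2,3}$: take the branch vertices $x$ and one other $L_2$ vertex, and route three internally disjoint paths through the four neighbours of $x$ and the enclosed vertex $y_x$. This must be checked concretely on the drawn gadget, choosing the two poles and three paths explicitly. Since outerplanar graphs have no $K_{2,3}$ minor, and an induced supergraph of a non-outerplanar graph is also non-outerplanar, $x\notin\mathcal I$. Therefore $|\mathcal I\cap V(H)|\le 7$.

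For general $n=11k$, I would take $k$ copies $H_1,\dots,H_k$ and join them using only edges between $L_1$-vertices of consecutive copies, drawn in the outer face, or simply take a disjoint union. The result is still a $2$-outerplane graph. The $L_2$ vertices of each copy must stay off the outer face, so each copy's $L_1$ has to remain on the outer face; this holds if the connecting edges are drawn in the outer face along the outer boundary. Now take any $\mathcal I\supseteq L_2(G)$ inducing an outerplanar graph. Its restriction to $H_i$ is an induced subgraph of an outerplanar graph, hence outerplanar, and it contains $L_2(H_i)$. The single-copy argument then gives $|\mathcal I\cap V(H_i)|\le 7$, and summing over the copies yields $|\mathcal I|\le 7k=7n/11$.

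The main obstacle is the concrete $K_{2,3}$ witness for each of the four outer vertices. It depends on the exact adjacencies in the figure, so I would fix the gadget explicitly: $L_2$ as a $7$-vertex outerplane graph, and each $x\in L_1$ adjacent to a consecutive triple or quadruple of outer vertices of $L_2$ that surrounds an interior one. I would then verify each case directly, using the symmetry of the gadget to reduce the work.
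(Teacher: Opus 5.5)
Your skeleton is the paper's own argument: an $11$-vertex gadget with $|L_1|=4$ and $|L_2|=7$ in which $G[L_2\cup\{x\}]$ contains a $K_{2,3}$ for every $x\in L_1$, then $n/11$ copies, then restricting $\mathcal I$ to each copy. That last part is fine. The gap is that the gadget, which is the entire content of the statement, is never produced, and the realizations you sketch do not work as stated.

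First, since $G[L_2]$ is outerplane, there is no ``interior'' vertex of $L_2$ for $x$ to surround. The vertex $y_x$ is itself on the boundary of $G[L_2]$ and is sealed off only once $x$ is added. Second, there is a counting obstruction:
\begin{itemize}
\item the $L_2$-neighbours of an outer vertex $x$ span a stretch of the boundary walk of $G[L_2]$ facing $x$;
\item stretches of distinct outer vertices are edge-disjoint, since otherwise one outer vertex would lie inside the region cut off by another;
\item $G[L_2\cup\{x\}]$ is outerplanar unless the stretch of $x$ has at least two edges.
\end{itemize}
If $G[L_2]$ were biconnected, its boundary would be a $7$-cycle, but four outer vertices need $8$ edges. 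Giving every outer vertex four $L_2$-neighbours is worse. Each stretch then has at least three edges, so the boundary walk has length at least $12$. On $7$ vertices this forces $G[L_2]$ to be a tree cut into four paths of length three, over which each $x$ merely fans, sealing nothing.

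So $G[L_2]$ must have a cutvertex. For instance, let $G[L_2]$ consist of the $4$-cycles $c\,p_1p_2p_3$ and $c\,q_1q_2q_3$ sharing $c$, with chords $cp_2$ and $cq_2$. Surround them by the $4$-cycle $x_1x_2x_3x_4$, and make $x_1,x_2,x_3,x_4$ adjacent to $\{c,p_1,p_2\}$, $\{p_2,p_3,c\}$, $\{c,q_1,q_2\}$, $\{q_2,q_3,c\}$, respectively. This is an internally-triangulated $2$-outerplane graph on $11$ vertices. $G[L_2\cup\{x_1\}]$ contains the $K_{2,3}$ with poles $c,p_2$ and middle vertices $x_1,p_1,p_3$. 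The same holds for $x_2$ (poles $c,p_2$) and for $x_3,x_4$ (poles $c,q_2$). Note that each $x_i$ has only three $L_2$-neighbours, the $K_{2,3}$ uses four $L_2$-vertices, and $x_i$ is a middle vertex rather than a pole. Your prescription ``poles $x$ and one other $L_2$ vertex'' would therefore not find it. With this gadget the rest of your argument goes through verbatim.
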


\begin{figure}[htb]
\centering
    \begin{subfigure}{0.48\textwidth}
		\centering
		\includegraphics[scale=1, page=1]{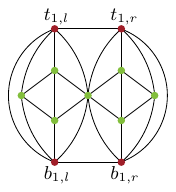}
		\subcaption{}
        \label{fig:flaw1}
	\end{subfigure}
    \begin{subfigure}{0.48\textwidth}
		\centering
		\includegraphics[scale=0.85, page=3]{figures/counterexample.pdf}
		\subcaption{}
        \label{fig:flaw2}
	\end{subfigure}
\caption{An~$n$-vertex~$2$-outerplane graph on which the algorithm by Borradaile et al.\ fails to find a set of~$2n/3$ vertices that induces an outerplanar graph. In this and the following figures, vertices depicted in red and green belong to~$L_1$ and~$L_2$, respectively.}
\label{fig:flaw}
\end{figure}

Second, at an intermediate level, the algorithm by Borradaile et al.\ relies on defining triples of vertices, so that each vertex in~$L_2$ occurs in exactly one triple; each triple contains either a single vertex in~$L_2$ and two of its neighbors in~$L_1$, or two vertices in~$L_2$ and a common neighbor in~$L_1$. Once a vertex in~$L_1$ is deleted, the vertices in~$L_2$ in the same triple as that vertex are exposed on the outer face. In order to find these triples, they rely on a structural lemma (\cite[Lemma 3]{DBLP:journals/gc/BorradaileLS17}), which asserts that there exists a matching on the boundary of~$G[L_2]$ which spans every vertex of~$L_2$ that has a unique neighbor in~$L_1$. The graph~$G$ shown in~\cref{fig:flaw1} is a counter-example to this statement. Indeed,~$L_2$ consists of~$7$ vertices; also,~$4$ of such vertices have a unique neighbor in~$L_1$ and no two of such~$4$ vertices are adjacent on the boundary of~$G[L_2]$. Hence, no matching exists on the boundary of~$G[L_2]$ which spans all such~$4$ vertices. 

Third, at a lowest level, their proof of \cite[Lemma 3]{DBLP:journals/gc/BorradaileLS17} works by induction on the number of vertices with a unique neighbor in~$L_1$. If there is at least one of such vertices, then its neighbors on the boundary of~$G[L_2]$ can be contracted into it, resulting in a graph with one less vertex with a unique neighbor in~$L_1$. The contraction might result in parallel edges and loops, which the authors remove from the graph. However, such a removal might result in a~$2$-outerplane graph which is not internally-triangulated any longer. Hence, the tools they use to find the next vertices to be contracted \cite[Observation 16, Lemma 1, Lemma 2]{DBLP:journals/gc/BorradaileLS17} are not applicable to the obtained graph and the induction breaks down. 

{\bf Remark.} The described flaw in the proof of \cite[Lemma 3]{DBLP:journals/gc/BorradaileLS17} does not occur if~$G[L_2]$ is biconnected or, more in general, a set of pairwise-disjoint biconnected components, since in this case the contractions they use preserve the property that the graph is internally-triangulated. Thus, the algorithm by Borradaile et al.\ successfully finds a set of~$2n/3$ vertices that induces an outerplane graph if the input~$n$-vertex graph~$G$ is such that~$G[L_2]$ is a set of pairwise-disjoint biconnected components. Unfortunately, we cannot use this result as a black box in our algorithm for general~$2$-outerplane graphs, since our induction needs to ensure stronger properties on the large set of vertices inducing an outerplane graph. 

\section{A New Proof of~\cref{th:main}} \label{se:algorithm}

In this section, we show an algorithm that, given an~$n$-vertex~$2$-outerplane graph~$G$, computes a set~$\mathcal I\subseteq V(G)$ with~$|\mathcal I|\geq 2n/3$, such that~$G[\mathcal I]$ is an outerplane graph. This proves~\cref{th:main}. We will not explicitly discuss the running time of our algorithm, however, it is easy to see that it can be implemented to run in polynomial time.

The section is organized as follows. In~\cref{se:definitions}, we introduce some definitions. In~\cref{se:base}, we present the case distinction used by our inductive algorithm. Finally, in~\cref{se:induction}, we discuss the inductive cases of our algorithm.

\subsection{Definitions} \label{se:definitions}

Our algorithm works by induction on~$n$. In the base case, we have~$n\leq 2$ and then the entire vertex set of~$G$ induces an outerplane graph. If~$n \geq 3$, by~\cref{le:augmentation} we can assume that~$G$ is an internally-triangulated non-trivial biconnected graph. Given a set of vertices~$\mathcal I \subseteq V(G)$, we say that~$\mathcal I$ is a \emph{good set} if~$|\mathcal I| \geq \frac{2}{3}n$ and if~$G[\mathcal I]$ is outerplane.

\begin{figure}[tb]
\centering
    \begin{subfigure}{0.48\textwidth}
		\centering
		\includegraphics[scale=.7, page=1]{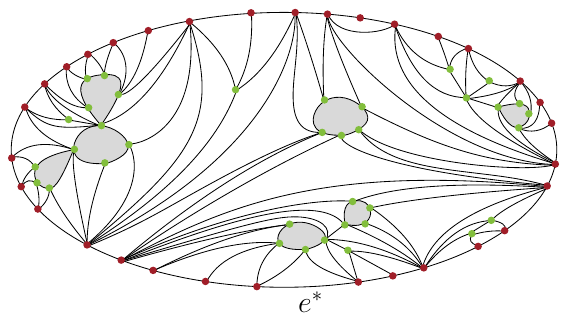}
		\subcaption{}
        \label{fig:setting-1}
	\end{subfigure}
    \hfill
    \begin{subfigure}{0.48\textwidth}
		\centering
		\includegraphics[scale=.7, page=2]{Setting.pdf}
		\subcaption{}
        \label{fig:setting-2}
	\end{subfigure}
    \hfill
    \begin{subfigure}{0.48\textwidth}
		\centering
		\includegraphics[scale=1, page=3]{Setting.pdf}
		\subcaption{}
        \label{fig:setting-3}
	\end{subfigure}
    \hfill
    \begin{subfigure}{0.48\textwidth}
		\centering
		\includegraphics[scale=1, page=4]{Setting.pdf}
		\subcaption{}
        \label{fig:setting-4}
	\end{subfigure}
\caption{(a) An internally-triangulated~$2$-outerplane graph~$G$ with a distinguished edge~$e^*$ along its outer face. (b) The outerplane graph~$G[L_1]$, together with its rooted weak dual tree~$T^*$, whose nodes are empty squares and whose edges are thick and red. (c)-(d) Two terminal components, say~$K_c$ and~$K_d$, of~$G[L_2]$, together with their rooted block-cutvertex trees~$T_{K_c}$ and~$T_{K_d}$, respectively. The extremal leaves of~$T_{K_c}$ are~$B_2$,~$B_3$, and~$B_4$, while~$B_8$ is the only extremal leaf of~$T_{K_d}$.}
\label{fig:setting}
\end{figure}

We introduce some definitions. Let~$G$ be an internally-triangulated~$2$-outerplane graph with a distinguished edge~$e^*$ incident to its outer face, as the one in \cref{fig:setting-1}. The \emph{weak dual}~$T$ of~$G[L_1]$ is the tree that has a node for each internal face of~$G[L_1]$ and an edge between two nodes if the corresponding faces share an edge on their boundaries; see \cref{fig:setting-2}. We root~$T$ at the node~$r^*$ corresponding to the unique internal face of~$G[L_1]$ that has~$e^*$ on its boundary; we denote by~$T^*$ the tree~$T$ once rooted at~$r^*$. Consider a leaf~$l$ of~$T^*$, which corresponds to a face~$f$ of~$G[L_1]$. The face~$f$ might be also a face of~$G$, or it might contain a connected component~$K$ of~$G[L_2]$. In the latter case, we say that~$K$ is a \emph{terminal component} of~$G[L_2]$. Consider a terminal component~$K$ of~$G[L_2]$ embedded in~$G$ inside a face~$f$ of~$G[L_1]$ corresponding to a leaf~$l$ of~$T^*$; see the examples in \cref{fig:setting-3,fig:setting-4}. The rooting of~$T^*$ induces a rooting of the block-cutvertex tree~$T_K$ of~$K$ as follows. Let~$xy$ be the edge of~$G[L_1]$ dual to the edge of~$T^*$ from~$l$ to its parent (or let~$e^*=xy$ if~$l$ is the root of~$T^*$), and let~$z$ be the vertex of~$K$ such that the~$3$-cycle~$xyz$ bounds an internal face of~$G$. If~$z$ is a cutvertex of~$K$, then~$T_K$ is rooted at the corresponding C-node, otherwise~$T_K$ is rooted at the unique B-node containing~$z$. A leaf~$B$ in the rooted tree~$T_K$ is \emph{extremal} if it has maximum depth (i.e., distance from the root) among the leaves of~$T_K$. Note that the children of the parent of an extremal leaf are also extremal leaves.

Consider a terminal component~$K$ of~$G[L_2]$ and suppose that~$K$ is not biconnected. Let~$B$ be an extremal leaf of the rooted block-cutvertex tree~$T_K$ of~$K$. The unique neighbor of~$B$ in~$T_K$ is called \emph{link-vertex} of~$B$ and denoted by~$c_B$. 
If~$B$ is a trivial block, it corresponds to an edge~$c_Bd_B$. If~$B$ is a non-trivial block, the neighbors of~$c_B$ on the boundary of the outer face of~$B$ are called  \emph{leftmost} and \emph{rightmost} neighbors of~$c_B$ in~$B$ and denoted by~$u_B$ and~$v_B$, where~$u_B$,~$v_B$, and~$c_B$ appear in this clockwise order along the outer face of~$B$. Let~$u'_B$ be the vertex that follows~$u_B$ in clockwise order along the outer face of~$B$ and let~$v'_B$ be the vertex that follows~$v_B$ in counter-clockwise order along the outer face of~$B$. 

If~$B$ is a trivial (non-trivial) block, let~$c_B\ell_B$ be the edge of~$G$ that precedes~$c_Bd_B$ (resp.~$c_Bu_B$) in clockwise direction around~$c_B$ and note that~$\ell_B\in L_1$. We call~$\ell_B$ the \emph{left cage vertex} of~$B$. Symmetrically, let~$c_Br_B$ be the edge of~$G$ that follows~$c_Bd_B$ (resp.~$c_Bv_B$) in clockwise direction around~$c_B$ and note that~$r_B\in L_1$. We call~$r_B$ the \emph{right cage vertex} of~$B$. The path obtained by traversing in clockwise direction the outer face of~$G$ from~$\ell_B$ to~$r_B$ is called the \emph{cage path} of~$B$ and denoted by~$\mathcal P_B$. Let~$\ell'_B$ be the vertex that precedes~$\ell_B$ in clockwise order along the outer face of~$G$, and let~$r'_B$ be the vertex that follows~$r_B$ in clockwise order along the outer face of~$G$. Finally, the subgraph of~$G$ whose vertices and edges are inside or on the boundary of the cycle~$\mathcal P_B\cup r_Bc_B\ell_B$ is called the \emph{cage graph} of~$B$ and denoted by~$G_B$. The fact that~$K$ is a terminal component of~$G[L_2]$ and that~$B$ is a leaf of~$T_K$ implies that~$G_B$ does not contain edges between two non-consecutive vertices of~$\mathcal P_B$; hence, every internal face of~$G$ that has an edge of~$\mathcal P_B$ on its boundary has a vertex of~$B$ as its third incident vertex. 

If~$B$ is non-trivial, then we say that~$B$ is \emph{pesky} (see~\cref{fig:pesky-component}) if it satisfies the following properties (the first two properties actually imply the other two, but we state all four of them, so to better describe the structure of~$G_B$):

\begin{itemize}
\item first,~$\delta_{G_B}(\ell_B)=\delta_{G_B}(r_B)=4$;
\item second, for each internal vertex~$w$ of~$\mathcal P_B$, we have~$\delta_{G_B}(w)=5$;
\item third,~$u_B$ and~$v_B$ have only~$\ell_B$ and~$r_B$, respectively, as neighbors in~$L_1$; and
\item finally, traversing~the boundary of the outer face of~$B$ in clockwise direction from~$u_B$ to~$v_B$, the encountered vertices have alternately~$1$ and~$2$ neighbors in~$L_1$. 
\end{itemize}

\begin{figure}[tb]
\centering
    \begin{subfigure}{0.48\textwidth}
		\centering
		\includegraphics[scale=1, page=1]{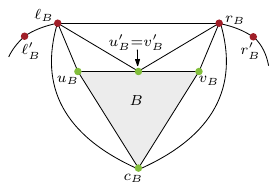}
		\subcaption{}
        \label{fig:pesky1}
	\end{subfigure}
    \hfill
    \begin{subfigure}{0.48\textwidth}
		\centering
		\includegraphics[scale=1, page=2]{figures/bad-component-revised.pdf}
		\subcaption{}
        \label{fig:pesky2}
	\end{subfigure}
\caption{Two pesky blocks.}
\label{fig:pesky-component}
\end{figure}

We observe the following. 

\begin{observation} \label{obs:pesky}
If~$B$ is pesky, the number of vertices of~$B$ is exactly twice the number of vertices of~$\mathcal P_B$. 
\end{observation}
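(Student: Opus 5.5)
The plan is to count the edges of the cage graph~$G_B$ that join the cage path~$\mathcal P_B$ to the block~$B$ in two different ways. The first count uses the degree conditions in the definition of pesky. The second uses the fact that the region between~$\mathcal P_B$ and~$B$ is a triangulated strip. Only the first two properties of a pesky block are needed.

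\textbf{Notation.} Let~$\mathcal P_B=w_0w_1\cdots w_k$ with~$w_0=\ell_B$ and~$w_k=r_B$, so~$\mathcal P_B$ has~$k+1$ vertices. Since~$B$ is non-trivial and biconnected, its outer face is bounded by a cycle. Let~$b_1=u_B,b_2,\dots,b_m=v_B$ be the vertices met when traversing this cycle clockwise from~$u_B$ to~$v_B$. Then~$B$ has exactly~$m+1$ vertices, namely~$c_B$ together with~$b_1,\dots,b_m$. We must show that~$m+1=2(k+1)$.

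\textbf{Step 1: the strip.} By the choice of~$\ell_B$ and~$r_B$ as the edges around~$c_B$ adjacent to~$c_Bu_B$ and~$c_Bv_B$, and since~$G$ is internally triangulated, the triangles~$\ell_Bc_Bu_B$ and~$r_Bv_Bc_B$ are faces of~$G_B$. Removing~$c_B$ and these two faces from the part of~$G_B$ outside~$B$ leaves a region~$S$. The region~$S$ is bounded by the path~$w_0\cdots w_k$, the path~$b_1\cdots b_m$, and the edges~$w_0b_1$ and~$w_kb_m$.

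Every internal face of~$S$ is a triangle. As noted right before the definition of pesky blocks,~$G_B$ has no edge between two non-consecutive vertices of~$\mathcal P_B$. Likewise, no chord between non-consecutive~$b_i$'s lies in~$S$, since such chords lie inside~$B$. Hence every edge of~$S$ that is not on one of the two paths is a \emph{rung}, joining some~$w_i$ to some~$b_j$.

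\textbf{Step 2: counting rungs by faces.} In such a strip triangulation, each triangle has exactly one edge on one of the two paths and two rungs. Consecutive triangles share a rung. So the number of triangles is~$k+(m-1)$, and the number of rungs is~$k+m$ (this counts the boundary rungs~$w_0b_1$ and~$w_kb_m$).

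\textbf{Step 3: counting rungs by degrees.} Each rung has exactly one endpoint on~$\mathcal P_B$, so we can count rungs at their~$\mathcal P_B$-endpoints.
\begin{itemize}
\item The neighbours of~$\ell_B$ in~$G_B$ are~$c_B$,~$w_1$, and its rungs. Since~$\delta_{G_B}(\ell_B)=4$,~$\ell_B$ has exactly~$2$ rungs. Symmetrically,~$r_B$ has exactly~$2$ rungs.
\item An internal vertex~$w_i$ of~$\mathcal P_B$ has~$\delta_{G_B}(w_i)=5$. Two of these edges lie on~$\mathcal P_B$, so~$w_i$ has exactly~$3$ rungs.
\end{itemize}
Therefore the number of rungs is~$2+2+3(k-1)=3k+1$.

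\textbf{Conclusion.} Equating the two counts gives~$k+m=3k+1$, so~$m=2k+1$. Hence~$|V(B)|=m+1=2(k+1)=2|V(\mathcal P_B)|$.

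\textbf{Main obstacle.} The delicate step is justifying the strip structure of Step~1: that the two corner triangles at~$c_B$ are faces, and that no chords of either path lie in~$S$. Both follow from internal triangulation and the remark preceding the definition of pesky blocks.

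A degenerate case also needs checking. If~$k=0$, i.e.\ $\ell_B=r_B$, then~$\ell_B$ is adjacent to~$c_B$ through two distinct edges, which is impossible in a simple graph. So~$k\geq 1$, and the count above applies.
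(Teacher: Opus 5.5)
Your argument is correct, and it takes a different route from the paper. The paper gives no proof of this observation. It is evidently meant to be read off the third and fourth properties in the definition of a pesky block. Along the outer face of $B$ from $u_B$ to $v_B$, the number of neighbours in $L_1$ alternates $1,2,\dots,2,1$. Each vertex with two such neighbours is the apex of the face on an edge of $\mathcal P_B$, and every edge of $\mathcal P_B$ has such an apex. So $B$ has $2(|V(\mathcal P_B)|-1)+1$ vertices besides $c_B$.

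You instead use only the two degree conditions and double-count the rungs of the triangulated strip: $k+m$ via the faces and $3k+1$ via the degrees. The paper's reading is shorter, but it depends on the alternation property. The paper asserts that this property follows from the degree conditions (``the first two properties actually imply the other two'') but never derives it. Your count makes the observation independent of that assertion.

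Two facts you use tacitly deserve a sentence each:
\begin{itemize}
\item Every vertex of $B$ lies on its outer cycle, because $G[L_2]$ is outerplane.
\item $S$ has no vertex in its interior, i.e.\ $V(G_B)=V(\mathcal P_B)\cup V(B)$. This is what makes every non-path edge of $S$ a rung. It follows from $B$ being a leaf block of a terminal component, and the paper relies on it implicitly too (e.g.\ in Case~6).
\end{itemize}

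Finally, $k\geq 1$ is better justified by noting that $c_B$, as a cutvertex of $K$, has edges outside $B$. Hence $c_B\ell_B$ and $c_Br_B$ are distinct edges, and thus $\ell_B\neq r_B$. As written, your argument presupposes that the two edges are distinct.
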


If~$B$ is non-trivial and not pesky, then we say that it is \emph{cushy}.

\subsection{Case~distinction} \label{se:base}

We now discuss the case distinction employed by our inductive algorithm. Suppose that~$G$ is a non-trivial biconnected graph. In all the inductive cases, the basic strategy is the one of removing some vertices of the input~$2$-outerplane graph~$G$, obtaining a smaller~$2$-outerplane graph~$H$, of then applying induction on~$H$, so to get a good set~$\mathcal I_H$ for~$H$, and of then defining a good set~$\mathcal I$ for~$G$ by adding to~$\mathcal I_H$ some of the initially removed vertices. Two initial inductive cases are used to deal with the situation in which~$G$ contains a vertex~$v\in L_1$ whose degree in~$G$ is~$2$ ({\bf Case~1}) or~$3$ ({\bf Case~2}). This allows us to assume that every vertex of~$L_1$ has degree at least~$4$ in~$G$. 

Next, consider a leaf~$l$ of the rooted weak dual tree~$T^*$ of~$G[L_1]$, and let~$f$ be the face of~$G[L_1]$ corresponding to~$l$. Since no vertex of~$L_1$ has degree~$2$ in~$G$, it follows that~$f$ contains a terminal component~$K$. Let~$xy$ be the edge of~$G[L_1]$ dual to the edge of~$T^*$ from the node corresponding to~$f$ to its parent (or let~$xy$ be the edge~$e^*$ if the node corresponding to~$f$ is the root of~$T^*$). Note that~$K$ is not a single vertex, otherwise all its neighbors in~$L_1$ different from~$x$ and~$y$ (there exists at least one such a neighbor) would have degree~$3$ in~$G$, which is excluded. If~$K$ is a single edge~$uv$ ({\bf Case~3}), then~$f$ is delimited by a~$3$-cycle~$xyz$ such that~$z$ is adjacent to both~$u$ and~$v$, or by a~$4$-cycle~$xytz$, such that each of~$t$ and~$z$ is adjacent to both~$u$ and~$v$. Indeed, if~$f$ was delimited by a cycle with more than~$4$ vertices, at least one of them would have degree~$3$ in~$G$; likewise, if one of the vertices on the boundary of~$f$ was not adjacent to~$u$ or~$v$, then it would have degree~$3$ in~$G$.  

Having ruled out that~$K$ is a vertex or an edge, it might be that~$K$ is a non-trivial biconnected graph ({\bf Case~4}). Unfortunately, we cannot handle this case via the algorithm by Borradaile et al.\ \cite{DBLP:journals/gc/BorradaileLS17}. Indeed, the natural choice for the graph~$H$ on which induction should be applied is the following: Remove~$K$ and the vertices in the cycle~$\mathcal C_f$ delimiting~$f$, except for~$x$ and~$y$, from~$G$, and let the resulting graph be~$H$. However, with this approach, the induction would tell us whether~$x$ and~$y$ belong to the set of vertices inducing an outerplane graph, and the algorithm by Borradaile et al.\ does not guarantee this freedom of choice when applied to the~$2$-outerplane graph induced by the vertices inside or on the boundary of~$\mathcal C_f$. Hence, we need to come up with a new algorithm providing guarantees on~$x$ and~$y$.

In the remaining cases,~$K$ is not biconnected. Then recall that~$T_K$ denotes the rooted block-cutvertex tree of~$K$. Let~$B$ be an extremal leaf of~$K$. Suppose first that~$B$ is non-trivial. If~$B$ is cushy ({\bf Case~5}), then we can apply a natural strategy for defining~$H$: We remove from~$G$ all the cage graph~$G_B$ of~$B$, except for the link-vertex~$c_B$ and, possibly, for the left and/or right cage vertices~$\ell_B$ and~$r_B$. If, on the other hand,~$B$ is pesky, then this simple strategy does not work (which justifies the attribute {\em pesky}) and we need to explore the structure of~$G$ in further depth. In particular, the next cases handle the situations in which: The edge~$c_B \ell'_B$ exists and the~$3$-cycle~$c_B \ell_B \ell'_B$ bounds an internal face of~$G$ ({\bf Case~6}); the edge~$c_B r'_B$ exists and the~$3$-cycle~$c_B r_B r'_B$ bounds an internal face of~$G$ ({\bf Case~6'}); the edge~$c_B \ell'_B$ exists and the~$3$-cycle~$c_B \ell_B \ell'_B$ contains in its interior a unique vertex ({\bf Case~7}); and the edge~$c_B r'_B$ exists and the~$3$-cycle~$c_B r_B r'_B$ contains in its interior a unique vertex ({\bf Case~7'}). Having ruled out that there is a face (Cases 6 and 6') or a~$K_4$ (Cases 7 and 7') next to~$G_B$, we next handle the situation in which there is a different pesky block~$B^*$ next to~$G_B$. Formally, this happens if there exists a pesky non-trivial extremal leaf~$B^*\neq B$ in~$K$ such that the link-vertex~$c_{B^*}$ of~$B^*$ is~$c_B$ and the right cage vertex~$r_{B^*}$ of~$B^*$ is~$\ell_B$ ({\bf Case~8}) or if there exists a pesky non-trivial extremal leaf~$B^*\neq B$ in~$K$ such that the link-vertex~$c_{B^*}$ of~$B^*$ is~$c_B$ and the left cage vertex~$\ell_{B^*}$ of~$B^*$ is~$r_B$ ({\bf Case~8'}). 

The next cases assume that~$B$ is trivial. Indeed, similarly to the case in which~$B$ is non-trivial, we can successfully apply induction if: The edge~$c_B \ell'_B$ exists and the~$3$-cycle~$c_B \ell_B \ell'_B$ bounds an internal face of~$G$ ({\bf Case~9}); the edge~$c_B r'_B$ exists and the~$3$-cycle~$c_B r_B r'_B$ bounds an internal face of~$G$ ({\bf Case~9'}); the edge~$c_B \ell'_B$ exists and the~$3$-cycle~$c_B \ell_B \ell'_B$ contains in its interior a unique vertex ({\bf Case~10}); and the edge~$c_B r'_B$ exists and the~$3$-cycle~$c_B r_B r'_B$ contains in its interior a unique vertex ({\bf Case~10'}). 

The described case distinction greatly simplifies the structure of~$G$ that can be assumed in the proximity of an extremal leaf~$B$ of the rooted block-cutvertex tree~$T_K$ of a terminal component~$K$ of~$G[L_2]$, as stated in the following.

\begin{lemma} \label{le:case-distinction-blocks}
If neither of Cases~5, 6, 6', 7, 7', 8, 8', 9, 9' 10, and 10' applies, then~$B$ is the only child of the cutvertex~$c_B$ in~$T_K$. 
\end{lemma}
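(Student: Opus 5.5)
We argue by contradiction: assume none of Cases~5, 6, 6', 7, 7', 8, 8', 9, 9', 10, 10' applies, and that~$c_B$ has in~$T_K$ a child~$B^*\neq B$. Since~$B$ is extremal, so is~$B^*$, and~$c_{B^*}=c_B$. Case~5 does not apply, so every non-trivial extremal leaf is pesky. Hence~$B$ and~$B^*$ are each either trivial or pesky. We order the children of~$c_B$ clockwise around~$c_B$ and choose~$B^*$ to be a child consecutive to~$B$ in this order. By symmetry (exchanging left and right, i.e.\ primed and unprimed cases), we may assume that~$B^*$ immediately follows~$B$ clockwise. We then study the wedge at~$c_B$ between the last edge from~$c_B$ into~$B$ (the edge~$c_Bd_B$, or~$c_Bv_B$) and the first edge from~$c_B$ into~$B^*$. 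By definition of the cage vertices, the edges around~$c_B$ inside this wedge all go to~$L_1$. The first of them is~$c_Br_B$ and the last is~$c_B\ell_{B^*}$.

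First suppose~$r_B=\ell_{B^*}$. Then~$\ell_{B^*}=r_B$, and if both blocks are pesky this is exactly Case~8'. If~$B$ (or~$B^*$) is trivial, we argue with the cage graph of~$B^*$ instead. Because~$G$ is internally triangulated and~$K$ is terminal, the face of~$G$ incident to~$c_Br_B$ on the side of~$B^*$ is a triangle~$c_B r_B w$ with~$w\in L_1$ or~$w$ in~$B^*$. Unwinding this, we need to show that~$r'_B$, or the analogous vertex~$\ell'_{B^*}$ from the side of~$B^*$, yields a triangular face, which is a Case~9/9'/6/6' configuration. I expect this subcase to need some care. The cleanest route is probably to rule out~$r_B=\ell_{B^*}$ when one block is trivial, using the degree~$\geq 4$ condition on~$L_1$ coming from Cases~1 and~2. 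Alternatively, we can observe that the faces on both sides of~$c_Br_B$ then fit Case~9' directly.

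Now suppose~$r_B\neq\ell_{B^*}$. The vertices adjacent to~$c_B$ in the wedge are~$r_B=w_1,w_2,\dots,w_k=\ell_{B^*}$, all in~$L_1$, with~$k\geq 2$. Consecutive~$w_i,w_{i+1}$ bound a triangular face~$c_Bw_iw_{i+1}$. Since the region enclosed contains no vertex of~$K$, the edge~$w_iw_{i+1}$ is either an outer-face edge or a chord of~$G[L_1]$. The key structural claim is the following. Because~$B$ and~$B^*$ lie in the same terminal face~$f$ of~$G[L_1]$, and every part of~$f$ beyond~$c_B$ belongs to~$K$, the cycle~$c_Bw_1w_2$ must either be a face with~$w_2=r'_B$, or enclose only~$L_1$ vertices lying between~$r_B$ and~$w_2$ along the outer face. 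Take the triangle~$c_B r_B w_2$. If~$w_2=r'_B$ and the triangle is a face, Case~6' (pesky~$B$) or Case~9' (trivial~$B$) applies. Otherwise~$r_Bw_2$ is a chord of~$G[L_1]$ cutting off a sub-outerplanar piece of~$L_1$ vertices. We then use internal triangulation together with degree~$\ge4$ (no Cases~1, 2) on the innermost vertex of that piece. A piece with~$\geq 2$ vertices contains a vertex of degree at most~$3$ in~$G$, since the vertices see only the piece plus~$r_B,w_2$. Hence the piece is a single vertex, namely~$r'_B$, and~$w_2$ is the vertex after it. Then we need~$w_2$ adjacent to~$r_B$ with the unique vertex inside. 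I expect this to need adjusting, since Case~7' is phrased with~$c_Br'_B$ being an edge. The right choice is probably to redo the argument so that the triangle~$c_Br_Br'_B$ either is a face or contains exactly one vertex. That is, we show~$c_B$ is adjacent to~$r'_B$ by internal triangulation, since any outer vertex strictly between would have degree~$\le 3$.

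The main obstacle is this last step. We must show rigorously that the region between~$c_Br_B$ and the next edge~$c_Bw_2$ contains at most one vertex, and that this vertex is adjacent to~$c_B$, r_B and~$r'_B$ (a~$K_4$). The plan is a counting argument on the outerplanar sub-triangulation bounded by~$c_B$,~$r_B$ and the outer path. Such a piece with at least two interior-to-path vertices has two ``ears''. Any ear vertex other than those adjacent to~$c_B$ has degree at most~$3$, contradicting the exclusion of Cases~1 and~2. This forces the configuration of Case~6', 7', 9' or 10', and yields the contradiction.
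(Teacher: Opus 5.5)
Your skeleton matches the paper's: argue by contradiction, take a sibling $B^*$ adjacent to $B$ around $c_B$, and inspect the wedge between them. However, both of your subcases reach the wrong conclusion, and the tool you propose for the second one is false.

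\textbf{Subcase $r_B=\ell_{B^*}$ with a trivial block.} This configuration cannot be ruled out by degrees, and it is not a Case~6/6'/9/9' configuration.
If $B^*$ is trivial, its cage path is the single edge $r_Br'_B$, since an internal vertex of the cage path of a trivial block has degree $3$, which Case~2 excludes. So $c_Br'_B=c_Br_{B^*}$ is an edge, and the triangle $c_Br_Br'_B$ contains exactly one vertex, $d_{B^*}$. This is Case~7' if $B$ is pesky and Case~10' if $B$ is trivial. Here $r_B$ has degree at least $5$, so no degree contradiction is available.
If instead $B$ is trivial and $B^*$ is pesky, then symmetrically $\ell'_{B^*}=\ell_B$ and the triangle $c_B\ell_{B^*}\ell'_{B^*}$ contains only $d_B$. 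This is Case~7 applied to $B^*$.
So the $K_4$ you describe as your main obstacle lives in this subcase, not in the other one.

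\textbf{Subcase $r_B\neq\ell_{B^*}$.} The triangle $c_Br_Bw_2$ is a face, because $c_Br_B$ and $c_Bw_2$ are consecutive edges at $c_B$. There is nothing inside it to count. The only question is whether $w_2=r'_B$, which immediately gives Case~6' or~9'.

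Your argument against a chord $r_Bw_2$ does not work. Vertices beyond a chord of $G[L_1]$ do not see ``only the piece plus $r_B,w_2$''. They lie on other faces of $G[L_1]$, which may contain other components of $G[L_2]$, so their degrees are unbounded and the piece may be most of $G$.

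The missing ingredient is the leaf structure of $T^*$. Since $f$ is a leaf of $T^*$, the only chord of $G[L_1]$ on the boundary of $f$ is $xy$. So a chord $r_Bw_2$ forces $\{r_B,w_2\}=\{x,y\}$, forces $c_B$ to be the root of $T_K$, and places the face $xyc_B$ inside your wedge.

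This is excluded only if you take the order of the children of $c_B$ to be a linear order starting from an edge of the parent block of $c_B$, or from $c_Bx$ if $c_B$ is the root, as the paper does. With an unanchored cyclic order, your wedge may contain the face $xyc_B$, or edges of the parent block (contradicting your claim that all wedge edges go to $L_1$). In that situation none of Cases~6', 7', 8', 9', 10' need hold on that side.

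Once the order is anchored, you only need the single face beyond $c_Br_B$. Its third vertex is either $r'_B$, giving Case~6' or~9', or a vertex of $B^*$, giving Cases~7', 10', 8', or Case~7 for $B^*$. This is exactly the paper's argument, mirrored.
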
 

\begin{proof}
Suppose, for a contradiction, that~$c_B$ has a child~$B' \neq B$ in~$T_K$. Since~$B$ is an extremal leaf of~$T_K$, then so is~$B'$. The edges incident to~$c_B$ that belong to the cage graph~$G_B$ of~$B$ appear consecutively in the circular order around~$c_B$, between the edges~$c_B\ell_B$ and~$c_B r_B$, and likewise for the edges incident to~$c_B$ that belong to the cage graph~$G_{B'}$ of~$B'$. We define a linear order~$\sigma$ of the edges that are incident to~$c_B$ and that belong to blocks of~$K$ that are children of~$c_B$ in~$T_K$, as follows. Start from any edge incident to~$c_B$ in the parent block of~$c_B$ in~$T_K$ (or start from the edge~$c_Bx$ if~$c_B$ is the root of~$T_K$). Then visit in clockwise order the edges incident to~$c_B$ in~$G$ and append to~$\sigma$ those that belong to blocks of~$K$ that are children of~$c_B$ in~$T_K$. We can assume that, in~$\sigma$, the edges that belong to~$B$ come right after the edges that belong to~$B'$, as otherwise a different choice for~$B$ and~$B'$ can be made, so to guarantee this property. 
If~$B$ or~$B'$ is non-trivial and cushy, then Case~5 applies, a contradiction. Otherwise, consider the face incident to the edge~$c_B\ell_B$ and outside the cage graph~$G_B$; let~$z$ be the third vertex incident to the face. If~$z \in L_1$, then~$z=\ell'_B$, as otherwise~$K$ would not be a terminal component of~$G[L_2]$, hence we would be in Case~6 or Case~9, depending on whether~$B$ is non-trivial or trivial, respectively, a contradiction. Hence, we have~$z\in L_2$, which implies that~$z\in V(B')$. If~$B'$ is trivial, then Case~7 or Case~10 applies, depending on whether~$B$ is non-trivial or trivial, respectively, a contradiction. If~$B'$ is non-trivial and pesky, and~$B$ is trivial, then Case~7' applies, a contradiction. Finally, if both~$B$ and~$B'$ are non-trivial and pesky, then Case~8 applies, a contradiction.
Since in every case we get a contradiction, the lemma follows.
\end{proof}

By~\cref{le:case-distinction-blocks} and since~$K$ is not biconnected,~$c_B$ has a parent B-node~$B_P$ in~$T_K$. We can handle directly the case in which~$B_P$ is a trivial block, both if~$B$ is non-trivial ({\bf Case~11}) and if it is trivial ({\bf Case~12}). In order to discuss Case~11 and Case~12, it will be useful to exploit the following property.

\begin{lemma} \label{le:case11}
Suppose that~$B_P$ is a trivial block, corresponding to an edge~$c_Bp$. Then the edges~$p \ell_B$ and~$p r_B$ belong to~$G$, and the cycles~$p \ell_B c_B$ and~$p r_B c_B$ delimit faces of~$G$. 
\end{lemma}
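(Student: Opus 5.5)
We look at the rotation of edges around $c_B$. We are in the situation reached after Lemma~\ref{le:case-distinction-blocks}: none of Cases~5--10' applies. Hence $B$ is the only child of $c_B$ in $T_K$, and $c_B$ has parent $B_P$, the single edge $c_Bp$. So the blocks of $K$ incident to $c_B$ are exactly $B$ and $B_P$.

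\textbf{Step 1 (rotation at $c_B$).} In clockwise order around $c_B$, the edges of $K$ at $c_B$ are the edges of $B$ followed by $c_Bp$. By definition of the cage vertices, $c_B\ell_B$ immediately precedes the first edge of $B$ (namely $c_Bd_B$ or $c_Bu_B$), and $c_Br_B$ immediately follows the last one (namely $c_Bd_B$ or $c_Bv_B$). Every other edge at $c_B$ goes to $L_1$. So the clockwise order continues from $c_Br_B$ through some edges to $L_1$ until it reaches $c_Bp$, and then through some edges to $L_1$ until it returns to $c_B\ell_B$.

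\textbf{Step 2 (the edges to $L_1$ form one fan).} Consider the faces incident to $c_B$ lying clockwise between $c_Br_B$ and $c_B\ell_B$, that is, outside the cage graph $G_B$. They are triangles, because $G$ is internally triangulated. I claim $c_Bp$ must be the only edge in this angular range, apart from the two bounding edges. Suppose instead that there is an edge $c_Bw$ with $w\in L_1$ strictly between $c_Br_B$ and $c_Bp$. Take $w$ to be the first such edge after $c_Br_B$. Then the face following $c_Br_B$ is a triangle $c_Br_Bw$, and $r_Bw$ is an edge of $G$.

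\textbf{Step 3 (ruling out such a $w$).} Both $r_B$ and $w$ lie in $L_1$, and $c_B\in K$ lies inside the face $f$ of $G[L_1]$. Therefore $r_Bw$ is an edge on the boundary of $f$ or a chord of $G[L_1]$. It cannot be a chord, because $f$ is a face of $G[L_1]$ and contains $c_B$, and the triangle $c_Br_Bw$ is inside $f$. So $r_Bw$ is an edge of the cycle $\mathcal C_f$ bounding $f$. The neighbours of $r_B$ on $\mathcal C_f$ are the predecessor of $r_B$ in $\mathcal P_B$ and $r'_B$; the predecessor is excluded because it lies inside $G_B$. Hence $w=r'_B$, and the triangle $c_Br_Br'_B$ is a face. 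If $B$ is non-trivial this is Case~6', and if $B$ is trivial it is Case~9'. Either way we contradict the assumption. So the face following $c_Br_B$ clockwise is $c_Br_Bp$; in particular $pr_B$ is an edge and this triangle is a face. The symmetric argument, using Cases~6 and~9, shows that $c_B\ell_Bp$ is a face and $p\ell_B$ is an edge.

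\textbf{Main obstacle.} The delicate point is the claim that $r_Bw$ must be an edge of $\mathcal C_f$ and that $w=r'_B$. This requires using that $K$ is a terminal component, so that $f$ is a face of $G[L_1]$ with no chords inside it. It also requires checking that the first neighbour after $r_B$ along $\mathcal C_f$, outside $\mathcal P_B$, is indeed $r'_B$, including the degenerate situation where $\mathcal C_f$ is short and $r'_B$ coincides with $\ell_B$. I expect that this degenerate case either contradicts the degree assumptions excluded by Cases~1 and~2, or still fits the definition of Case~6'/9'; I would check it separately.
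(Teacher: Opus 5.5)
Your overall route is the same as the paper's. You look at the face of $G$ just outside $G_B$ at $c_Br_B$ (and symmetrically at $c_B\ell_B$), show that its third vertex cannot lie in $L_1$, and conclude that it is $p$. The gap is in Step~3, where you assert that the neighbours of $r_B$ on $\mathcal C_f$ are its predecessor on $\mathcal P_B$ and $r'_B$.

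That assertion holds only for edges of $\mathcal C_f$ that lie on the outer face of $G$. The terminal-component property gives this for every edge of $\mathcal C_f$ \emph{except} $xy$, the edge dual to the edge from $l$ to its parent in $T^*$. When $l$ is not the root of $T^*$, $xy$ is an internal edge of $G[L_1]$, i.e.\ a chord of the outer cycle of $G$. So if $r_B\in\{x,y\}$, your vertex $w$ could be the other endpoint of $xy$. That vertex is not $r'_B$, so neither Case~6' nor Case~9' is triggered, and your argument does not exclude this possibility. The obstacle you flagged (a short $\mathcal C_f$ with $r'_B=\ell_B$) is not the real issue. That case is harmless, because the face $c_Br_B\ell_B$ would leave no room around $c_B$ for the edge $c_Bp$.

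The missing ingredient is the rooting of $T_K$; beyond identifying the blocks at $c_B$, you never use the fact that $c_B$ has a parent. Inside $f$, the unique face of $G$ incident to $xy$ is the triangle whose apex in $K$ determines the root of $T_K$. If $c_Br_Bw$ were that face, the apex would be $c_B$, which is a cutvertex of $K$. Then $T_K$ would be rooted at the C-node $c_B$, contradicting the hypothesis that $c_B$ has the parent $B_P$.

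This is exactly the extra step in the paper's proof. There, a dual-tree argument shows that an $L_1$-edge at $\ell_B$ other than $\ell_B\ell'_B$ must be $xy$, and the rooting of $T_K$ then rules this out. Adding this step, together with its mirror image for $\ell_B$, completes your argument.
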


\begin{proof}
Consider the face incident to the edge~$c_B\ell_B$ and outside the cage graph~$G_B$; let~$z$ be the third vertex incident to the face. We prove that~$z \in L_2$. Suppose, for a contradiction, that~$z \in L_1$.  If~$z=\ell'_B$, then Case~6 or Case~9 applies, depending on whether~$B$ is non-trivial or trivial, respectively, a contradiction. If~$z\neq \ell'_B$, then~$\ell_b\ell'_B$ is an internal edge of~$G[L_1]$, hence it is dual to an edge~$e$ of~$T^*$; one of the end-vertices of~$e$ is the node~$l$ of~$T^*$ corresponding to the face~$f$ of~$G[L_1]$ containing~$K$. If~$l$ is the parent of the other end-vertex of~$e$ in~$T^*$, then~$K$ is not a terminal component of~$G[L_2]$, a contradiction. If~$l$ is a child of the other end-vertex of~$e$ in~$T^*$, then~$\ell_b\ell'_B$ is the edge~$xy$. However, this implies that~$T_K$ is rooted at~$c_B$, a contradiction to the fact that~$c_B$ has a parent~$B_P$ in~$T_K$. This proves that~$z \in L_2$. It follows that~$z=p$, as otherwise the edge~$c_Bz$ would be an edge between two vertices of different connected components of~$G[L_2]$ or an edge between two different blocks of~$K$, which is a contradiction in both cases. It can be symmetrically proved that the edge~$p r_B$ belongs to~$G$ and the cycle~$p r_B c_B$ delimits an internal face of~$G$.
\end{proof}

We are now ready to discuss the final inductive case ({\bf Case~13}), which occurs when neither of Cases 1-12 applies in the entire graph~$G$ (in particular, Cases 5-12 do not apply to any extremal leaf in any terminal component of~$G[L_2]$). The next lemma describes the structure that we are guaranteed to encounter in Case~13. 

\begin{lemma} \label{le:case13}
If neither of Cases 1--12 applies, then there exists a terminal component~$K$ of~$G[L_2]$ that contains a biconnected component~$B_P$ that satisfies the following properties.
\begin{itemize}
\item First,~$B_P$ is non-trivial and is not a leaf of~$T_K$. 
\item Second, each child~$c_B$ of~$B_P$ in~$T_K$ has a unique child~$B$, which is an extremal leaf of~$T_K$ and is either a trivial block or a non-trivial pesky block.
\item Third, for each child~$B$ of a child~$c_B$ of~$B_P$, let~$c^l_B$ and~$c^r_B$ be the vertices that precede and follow~$c_B$ in clockwise order along the boundary of~$B_P$; then the edges~$c^l_B \ell_B$ and~$c^r_B r_B$ belong to~$G$, and the cycles~$c^l_B \ell_B c_B$ and~$c^r_B r_B c_B$ delimit internal faces of~$G$. 
\end{itemize}
\end{lemma}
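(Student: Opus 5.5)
Since Cases 1 and 2 do not apply, every vertex of $L_1$ has degree at least $4$, so, as argued in \cref{se:base}, every leaf of $T^*$ corresponds to a face of $G[L_1]$ containing a terminal component. Fix such a terminal component $K$. Since Cases 3 and 4 do not apply, $K$ is neither a single vertex, nor an edge, nor a non-trivial biconnected graph; hence $K$ is not biconnected and $T_K$ has depth at least $2$. Among the extremal leaves of $T_K$ pick one, $B$, and let $c_B$ be its link-vertex. By \cref{le:case-distinction-blocks}, $B$ is the unique child of $c_B$. Since $K$ is not biconnected, $c_B$ is not the root, so it has a parent B-node $B_P$. Since Cases 11 and 12 do not apply, $B_P$ is non-trivial, and it is not a leaf because it has the child $c_B$. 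This gives the first property.

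For the second property, consider any child $c$ of $B_P$, and let $B'$ be a child of $c$. I claim that $B'$ is an extremal leaf. Every B-node child of $c$ has depth $\mathrm{depth}(B)$. If $B'$ were not a leaf, $T_K$ would contain a leaf deeper than $B$, contradicting the extremality of $B$. So every grandchild of $B_P$ through $c$ is an extremal leaf. Applying \cref{le:case-distinction-blocks} to $B'$ shows that $B'$ is the unique child of $c$. Since Case 5 does not apply, $B'$ is trivial or pesky.

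For the third property, fix a child $B$ of a child $c_B$ of $B_P$, and mimic the proof of \cref{le:case11}. Let $z$ be the third vertex of the internal face incident to $c_B\ell_B$ outside $G_B$. First I show $z\notin L_1$. If $z=\ell'_B$, then Case 6 or Case 9 applies. Otherwise $\ell_B z$ is an internal edge of $G[L_1]$. Then either $K$ is not terminal, or $\ell_B z=xy$ and $T_K$ is rooted at $c_B$; the latter contradicts $c_B$ having the parent $B_P$. Hence $z\in L_2$. Since $z$ is adjacent to $c_B$ and lies outside $G_B$, $z$ belongs to $K$ and to a block containing $c_B$ other than $B$. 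Because $c_B$ has the unique child $B$, that block is $B_P$, so $z\in V(B_P)$.

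It remains to show that $z=c^l_B$, the vertex preceding $c_B$ on the boundary of $B_P$. The face $z\ell_Bc_B$ is consecutive, in the rotation at $c_B$, with the edge $c_B\ell_B$ on the side away from $G_B$. The edge $c_Bz$ lies in $B_P$ and comes first in the rotation after leaving the cage, so it must be the outer-face boundary edge of $B_P$ at $c_B$ on that side. If it were a chord of $B_P$, the boundary edge of $B_P$ on that side would lie between $c_B\ell_B$ and $c_Bz$, contradicting that $z\ell_Bc_B$ is a face. Hence $c_Bz$ is the boundary edge on the clockwise-preceding side, giving $z=c^l_B$. The edge $c^l_B\ell_B$ then exists, and $c^l_B\ell_Bc_B$ bounds an internal face. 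The argument for $c^r_B$ and $r_B$ is symmetric, using Cases 6', 9' instead of 6 and 9.

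The main obstacle is the orientation bookkeeping in this last step. It requires checking that the clockwise conventions for $\ell_B$, $r_B$ and for $c^l_B$, $c^r_B$ match. It also uses that $B_P$ is non-trivial, so that $c^l_B\ne c^r_B$ and both boundary neighbours are well defined.
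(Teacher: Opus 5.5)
Your proposal is correct and follows essentially the same route as the paper. You pick an extremal leaf $B$, then use \cref{le:case-distinction-blocks} and the failure of Cases~5, 11 and~12 to obtain $B_P$ and the first two properties. For the third property you mimic the proof of \cref{le:case11}, and your rotation argument at $c_B$ spells out the step $z=c^l_B$, which the paper only states tersely.

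One small nit: ``$T_K$ has depth at least~$2$'' does not follow from $K$ being non-biconnected alone. If $T_K$ is rooted at a C-node, all leaves may have depth~$1$. The fact that $c_B$ is not the root should instead be derived, as the paper does, from $c_B$ being a cutvertex whose unique child is $B$ by \cref{le:case-distinction-blocks}.
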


\begin{proof}
Since Case~1 does not apply, we have that~$G[L_2]$ contains a terminal component. Let then~$K$ be any terminal component of~$G[L_2]$. Since Cases~2,~3, and~4 do not apply to~$K$, it follows that~$K$ is not biconnected. Let then~$B$ be an extremal leaf of~$T_K$. Since Case~5 does not apply,~$B$ is either a trivial block or a non-trivial pesky block. Since Cases~5, 6, 6', 7, 7', 8, 8', 9, 9', 10, and 10' do not apply, by~\cref{le:case-distinction-blocks}, we have that~$B$ is the only child of its parent~$c_B$ in~$T_K$. We can now define~$B_P$ as the parent of~$c_B$ in~$T_K$. By construction,~$B_P$ is not a leaf of~$T_K$. Also, since Cases~11 and~12 do not apply,~$B_P$ is non-trivial. Since~$B$ is an extremal leaf, every child of a child of~$B_P$ is also an extremal leaf, and then by the same arguments as for~$B$, it is either a trivial block or a non-trivial pesky block and it is the only child of its parent in~$T_K$. It remains to prove the third item. This proof is very similar to the proof of~\cref{le:case11}. Consider any child~$B$ of a child~$c_B$ of~$B_P$, and let~$c^l_B$ be the vertex that precedes~$c_B$ in clockwise order along the boundary of~$B_P$. Consider the face incident to the edge~$c_B\ell_B$ and outside the cage graph~$G_B$; let~$z$ be the third vertex incident to the face. We prove that~$z \in L_2$. Suppose, for a contradiction, that~$z \in L_1$.  If~$z=\ell'_B$, then Case~6 or Case~9 applies, depending on whether~$B$ is non-trivial or trivial, respectively, a contradiction.  If~$z\neq \ell'_B$, then~$\ell_b\ell'_B$ is an internal edge of~$G[L_1]$, hence it is dual to an edge~$e$ of~$T^*$; one of the end-vertices of~$e$ is the node~$l$ of~$T^*$ corresponding to the face~$f$ of~$G[L_1]$ containing~$K$. If~$l$ is the parent of the other end-vertex of~$e$ in~$T^*$, then~$K$ is not a terminal component of~$G[L_2]$, a contradiction. If~$l$ is a child of the other end-vertex of~$e$ in~$T^*$, then~$\ell_b\ell'_B$ is the edge~$xy$. However, this implies that~$T_K$ is rooted at~$c_B$, a contradiction to the fact that~$c_B$ has a parent~$B_P$ in~$T_K$. This proves that~$z \in L_2$. It follows that~$z=c^l_B$, as otherwise the edge~$c_Bz$ would be an edge between two vertices of different connected components of~$G[L_2]$, or an edge between two different blocks of~$K$, or an edge between two vertices of the same block~$B$ of~$K$ not belonging to~$B$, which is a contradiction in every case. It can be symmetrically proved that the edge~$c^r_B r_B$ belongs to~$G$ and the cycle~$c^r_B r_B c_B$ delimits an internal face of~$G$, where~$c^r_B$ is the vertex that follows~$c_B$ in clockwise order along the boundary of~$B_P$.
\end{proof}

Finally, it remains to observe that one of the inductive cases always applies; indeed, if neither of Cases 1--12 applies, then Case~13 does. Note that the graph~$H$ constructed in order to apply induction might be not biconnected, however by~\cref{le:augmentation} it can be augmented to a non-trivial biconnected graph by only adding edges to it, as long as it has at least three vertices (while it encounters the base case if it has at most two vertices). 


\subsection{Inductive cases} \label{se:induction}

We now discuss the inductive cases. Each inductive case assumes that neither of the previous cases applies. 

\paragraph*{Case~1: There exists a vertex~$v\in L_1$ such that~$\delta_G(v)= 2$} Let~$u$ and~$z$ be the neighbors of~$v$ in~$G$, and observe that they belong to~$L_1$ (see \cref{fig:algorithm-case1}). We construct an~$(n-1)$-vertex~$2$-outerplane graph~$H$ from~$G$ by removing~$v$ and its incident edges. We apply induction on~$H$, so to find a good set~$\mathcal I_H$ for~$H$. We define~$\mathcal I=\mathcal I_H\cup \{v\}$. We prove that~$\mathcal I$ is a good set. First, we have~$|\mathcal I|=|\mathcal I_H|+1\geq 2(n-1)/3+1>2n/3$. Also,~$G[\mathcal I]$ is outerplane by~\cref{obs:union}, since~$G[\mathcal I_H]$ and~$G[v \cup (\{u,z\}\cap \mathcal I_H)]$ are outerplane, are each in the outer face of the other one, and share either no vertex, or a single vertex, or a single edge.     

\begin{figure}[tb]
\centering
    \begin{subfigure}{0.48\textwidth}
		\centering
		\includegraphics[scale=1.2, page=1]{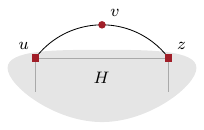}
		\subcaption{}
        \label{fig:algorithm-case1}
	\end{subfigure}
    \hfill
    \begin{subfigure}{0.48\textwidth}
		\centering
		\includegraphics[scale=1.2, page=2]{algorithm.pdf}
		\subcaption{}
        \label{fig:algorithm-case2.1}
	\end{subfigure}
    
    \bigskip
    
    \begin{subfigure}{0.48\textwidth}
		\centering
		\includegraphics[scale=1.2, page=3]{algorithm.pdf}
		\subcaption{}
        \label{fig:algorithm-case2.2}
	\end{subfigure}
    \hfill
    \begin{subfigure}{0.48\textwidth}
		\centering
		\includegraphics[scale=1.2, page=4]{algorithm.pdf}
		\subcaption{}
        \label{fig:algorithm-case2.3}
	\end{subfigure}

\caption{Illustrations for (a) Case~1, (b) Case~2.1, (c) Case~2.2, and (d) Case~2.3. In this and the following figures, squared vertices belong to the graph~$H$ (shown with gray edges and a shaded area which hides most of its vertices and edges) on which induction is applied, while rounded vertices do not belong to~$H$. A cross on a vertex not belonging to~$H$ indicates that it is not selected to be in the set~$\mathcal I$ inducing an outerplane graph. In (b), the dotted curve represents an edge added to~$H$ for the induction and the vertex~$v'$ might belong to~$L_1$ or~$L_2$.}
\label{fig:algorithm-cases1-2}
\end{figure}


\paragraph*{Case~2: There exists a vertex~$v\in L_1$ such that~$\delta_G(v)= 3$} Let~$u$ and~$z$ be the neighbors of~$v$ along the outer face of~$G$; hence,~$u$ and~$z$ belong to~$L_1$. The third neighbor,~$v'$, of~$v$ might belong to~$L_1$ or~$L_2$.  We further distinguish three cases.

{\bf Case~2.1:} The edge~$uz$ does not belong to~$G$ (see \cref{fig:algorithm-case2.1}). We construct an~$(n-1)$-vertex~$2$-outerplane graph~$H$ from~$G$ by removing~$v$ and its incident edges, and by adding the edge~$uz$ in the outer face of~$H$ in place of the path~$uvz$. We apply induction on~$H$, so to find a good set~$\mathcal I_H$ for~$H$. We define~$\mathcal I=\mathcal I_H\cup \{v\}$. We prove that~$\mathcal I$ is a good set. First, we have~$|\mathcal I|=|\mathcal I_H|+1\geq 2(n-1)/3+1>2n/3$. Suppose, for a contradiction, that~$G[\mathcal I]$ contains a cycle~$\mathcal C$ which contains a vertex~$w\in \mathcal I$ in its interior. Note that~$w\neq v$, since~$v$ is incident to the outer face of~$G$, hence~$w\in \mathcal I_H$. Also,~$v$ is one of the vertices of~$\mathcal C$, as otherwise~$G[\mathcal I_H]$ would also contain~$\mathcal C$, and~$\mathcal I_H$ would not be a good set. The neighbors of~$v$ in~$\mathcal C$ are two vertices among~$u$,~$z$, and~$v'$. Note that~$H$ contains all three edges~$uz$,~$uv'$, and~$v'z$, the first one by construction and the other two since~$G$ is internally-triangulated. Hence, the length-$2$ path in~$\mathcal C$ that has~$v$ as middle vertex can be replaced by the edge between its end-vertices, resulting in a cycle which also contains~$w$ in its interior and whose vertices all belong to~$\mathcal I_H$, which implies that~$\mathcal I_H$ is not a good set, a contradiction.

{\bf Case~2.2:} The edge~$uz$ belongs to~$G$ and the subgraph~$G_{uvz}$ of~$G$ induced by the vertices inside or on the boundary of the~$3$-cycle~$uvz$ has at least~$6$ vertices (see \cref{fig:algorithm-case2.2}). Note that the assumption that~$uz$ belongs to~$G$ implies that~$v'\in L_2$. Let~$H$ be the subgraph of~$G$ induced by~$V(G)-V(G_{uvz})$.  We apply induction on~$H$, so to find a good set~$\mathcal I_H$ for~$H$. We define~$\mathcal I$ as~$\mathcal I_H$ plus all the vertices of~$G_{uvz}$ different from~$u$ and~$z$. We prove that~$\mathcal I$ is a good set. First, we have~$|\mathcal I|=|\mathcal I_H|+|V(G_{uvz})|-2$. Note that~$|V(H)|=n-|V(G_{uvz})|$, hence, by induction, we have~$|\mathcal I_H|\geq 2(n-|V(G_{uvz})|)/3$. It follows that~$|\mathcal I|\geq 2(n-|V(G_{uvz})|)/3+|V(G_{uvz})|-2=2n/3 +|V(G_{uvz})|/3-2\geq 2n/3$, where the last inequality comes from the fact that~$|V(G_{uvz})|\geq 6$. By~\cref{obs:pesky}, we have that~$G[\mathcal I]$ is an outerplane graph, since it is composed of the outerplane graphs~$G[\mathcal I_H]$ and~$G_{uvz}-\{u,z\}$, which are each in the outer face of the other one and do not share any vertex. Note that~$G_{uvz}-\{u,z\}$ is indeed an outerplane graph since it is composed of the outerplane graph~$G[V(G_{uvz})\cap L_2]$ augmented with the edge~$vv'$ in its outer face.  

{\bf Case~2.3:} The edge~$uz$ belongs to~$G$ and~$G_{uvz}$ has at most~$5$ vertices (see \cref{fig:algorithm-case2.3}). Let~$H$ be the subgraph of~$G$ induced by~$\{u\} \cup (V(G)-V(G_{uvz}))$.  We apply induction on~$H$, so to find a good set~$\mathcal I_H$ for~$H$. We define~$\mathcal I$ as~$\mathcal I_H$ plus all the vertices of~$G_{uvz}$ different from~$u$ and~$z$. Note that~$u$ is in~$\mathcal I$ if and only if it belongs to~$\mathcal I_H$, while~$z$ is not in~$\mathcal I$. We prove that~$\mathcal I$ is a good set. First, we have~$|\mathcal I|=|\mathcal I_H|+|V(G_{uvz})|-2$. Note that~$|V(H)|=n-|V(G_{uvz})|+1$, hence, by induction, we have~$|\mathcal I_H|\geq 2(n-|V(G_{uvz})|+1)/3$. It follows that~$|\mathcal I|\geq 2(n-|V(G_{uvz})|+1)/3+|V(G_{uvz})|-2=2n/3 +|V(G_{uvz})|/3-4/3\geq 2n/3$, where the last inequality comes from the fact that~$|V(G_{uvz})|\geq 4$, given that~$v$ has degree~$3$. Note that~$G[\mathcal I_H]$ is outerplane by induction and that the subgraph of~$G_{uvz}$ induced by the vertices in~$\mathcal I$ is either a path in the outer face of~$G[\mathcal I_H]$, if~$u\notin \mathcal I$, or a biconnected outerplane graph whose vertex set intersects with the one of~$G[\mathcal I_H]$ only at~$u$, if~$u \in \mathcal I$. Hence, by~\cref{obs:union}, we have that~$G[\mathcal I]$ is an outerplane graph. 


In the remaining cases,~$l$ denotes a leaf of the rooted weak dual tree~$T^*$ of~$G[L_1]$, where~$l$ corresponds to a face~$f$ of~$G[L_1]$ and contains a terminal component~$K$ of~$G[L_2]$. Let~$xy$ be the edge of~$G[L_1]$ dual to the edge of~$T^*$ from the node corresponding to~$f$ to its parent (or let~$xy$ be the edge~$e^*$ if the node corresponding to~$f$ is the root of~$T^*$). 

\begin{figure}[tb]
\centering
    \begin{subfigure}{0.48\textwidth}
		\centering
		\includegraphics[scale=1.2, page=5]{algorithm.pdf}
		\subcaption{}
        \label{fig:algorithm-case3.1}
	\end{subfigure}
    \hfill
    \begin{subfigure}{0.48\textwidth}
		\centering
		\includegraphics[scale=1.2, page=6]{algorithm.pdf}
		\subcaption{}
        \label{fig:algorithm-case3.2}
	\end{subfigure}
\caption{Illustrations for (a) Case~3.1 and (b) Case~3.2.}
\label{fig:algorithm-cases3}
\end{figure}

\paragraph*{Case~3:~$K$ is a single edge~$uv$} Recall that~$f$ is delimited by a~$3$-cycle~$xyz$ such that~$z$ is adjacent to both~$u$ and~$v$, or by a~$4$-cycle~$xytz$, such that each of~$t$ and~$z$ is adjacent to both~$u$ and~$v$. We distinguish~these~cases.

{\bf Case~3.1:} The face~$f$ of~$G[L_1]$ is delimited by a~$3$-cycle~$xyz$ such that~$z$ is adjacent to both~$u$ and~$v$ (see \cref{fig:algorithm-case3.1}). Let~$H$ be the subgraph of~$G$ induced by~$V(G)-\{u,v,z\}$.  We apply induction on~$H$, so to find a good set~$\mathcal I_H$ for~$H$. We define~$\mathcal I=\mathcal I_H\cup \{u,v\}$. We prove that~$\mathcal I$ is a good set. First, we have~$|\mathcal I|=|\mathcal I_H|+2$. Note that~$|V(H)|=n-3$, hence, by induction, we have~$|\mathcal I_H|\geq 2(n-3)/3$. It follows that~$|\mathcal I|\geq 2(n-3)/3+2=2n/3$. Also,~$G[\mathcal I_H]$ is outerplane by induction and the subgraph of~$G$ induced by~$u$,~$v$, and by~$\{x,y\}\cap \mathcal I_H$ is outerplane, as well, given that~$z$ is a neighbor of all of~$u,v,x,y$. Since~$G[\mathcal I_H]$ and~$G[\{u,v\} \cup (\{x,y\}\cap \mathcal I_H)]$ are each in the outer face of the other one, and share either no vertex, or a single vertex, or a single edge, by~\cref{obs:union}, we have that~$G[\mathcal I]$ is an outerplane graph. 

{\bf Case~3.2:} The face~$f$ of~$G[L_1]$ is delimited by a~$4$-cycle~$xytz$, such that each of~$t$ and~$z$ is adjacent to both~$u$ and~$v$ (see \cref{fig:algorithm-case3.2}). Let~$H$ be the subgraph of~$G$ induced by~$V(G)-\{u,v,t,z\}$. We apply induction on~$H$, so to find a good set~$\mathcal I_H$ for~$H$. We define~$\mathcal I=\mathcal I_H\cup \{u,v,t\}$. We prove that~$\mathcal I$ is a good set. First, we have~$|\mathcal I|=|\mathcal I_H|+3$. Note that~$|V(H)|=n-4$, hence, by induction, we have~$|\mathcal I_H|\geq 2(n-4)/3$. It follows that~$|\mathcal I|\geq 2(n-4)/3+3>2n/3$. Also,~$G[\mathcal I_H]$ is outerplane by induction; also, the subgraph of~$G$ induced by~$u$,~$v$,~$t$, and by~$\{x,y\}\cap \mathcal I_H$ is outerplane, as well, given that~$z$ is adjacent to all of~$u,v,t$, and~$x$, and given that~$y$ belong to~$L_1$. Since~$G[\mathcal I_H]$ and~$G[\{u,v,t\} \cup (\{x,y\}\cap \mathcal I_H)]$ are each in the outer face of the other one, and share either no vertex, or a single vertex, or a single edge, by~\cref{obs:union}, we have that~$G[\mathcal I]$ is an outerplane graph. 

\begin{figure}[htb]
\centering
\includegraphics[scale=1.2, page=1]{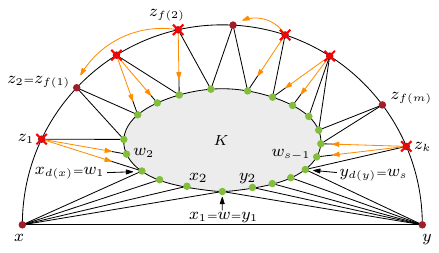}
\caption{Illustration for Case~4. The result of applying the basic algorithm to~$L$ to compute~$\mathcal I_L$. In this and the following figures, orange arrows indicate the charging of vertices that are not in~$\mathcal I_L$.}
\label{fig:algorithm-case4}
\end{figure}

\paragraph*{Case~4:~$K$ is a non-trivial biconnected graph}

Recall that~$\mathcal C_f$ denotes the cycle delimiting the boundary of the face~$f$ of~$G[L_1]$ containing~$K$. Since~$K$ is a terminal component, all the edges of~$\mathcal C_f$, except possibly for~$xy$, are incident to the outer face of~$G$. Denote by~$L$ the graph~$G[V(\mathcal C_f)\cup V(K)]$ and let~$n_L=|V(L)|$. Before describing how to apply induction, we need to prove the following lemma. 

\begin{lemma} \label{le:biconnectedK}
There exists a set~$\mathcal I_L\subset V(L)$ such that~$\{x,y\}\cap \mathcal I_L=\emptyset$ and such that (at least) one of the following properties is satisfied:
\begin{itemize}
    \item {\em Property A:}~$|\mathcal I_L|\geq 2n_L/3$ and~$G[\mathcal I_L]$ is an outerplane graph;
    \item {\em Property B:}~$|\mathcal I_L|\geq 2(n_L-1)/3$ and~$G[\mathcal I_L\cup \{x\}]$ is an outerplane graph;
    \item {\em Property C:}~$|\mathcal I_L|\geq 2(n_L-1)/3$ and~$G[\mathcal I_L\cup \{y\}]$ is an outerplane graph; and
    \item {\em Property D:}~$|\mathcal I_L|\geq 2(n_L-2)/3$ and~$G[\mathcal I_L\cup \{x,y\}]$ is an outerplane graph.
\end{itemize}
\end{lemma}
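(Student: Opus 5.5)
The plan is to keep all of $K$, keep as many vertices of $\mathcal C_f$ as possible, and delete a small set $S$ of vertices of $\mathcal C_f-\{x,y\}$ chosen so that every vertex of $K$ becomes incident to the outer face. We then set $\mathcal I_L=V(K)\cup(V(\mathcal C_f)\setminus(S\cup\{x,y\}))$ and pick whichever of Properties A--D the counting supports.

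\textbf{Setup.} Write $\mathcal C_f=x,y,w_1,\dots,w_k$ in clockwise order. Since $K$ is a biconnected component of the outerplane graph $G[L_2]$, all its vertices lie on a cycle $\mathcal D=d_1,\dots,d_m$ bounding its outer face. Hence $n_L=k+m+2$.

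The region between $\mathcal C_f$ and $\mathcal D$ is a triangulated annulus. Since $K$ is terminal, the only possible chord of $\mathcal C_f$ is absent, so it has no chords. Therefore each $w_i$ is adjacent to a contiguous interval $J_i$ of $\mathcal D$, and likewise $J_x$ and $J_y$ for $x$ and $y$. Consecutive intervals share exactly one vertex (the apex of the triangle on the corresponding edge of $\mathcal C_f$), and every vertex of $\mathcal D$ lies in some interval. This gives the identity
\[
\sum_{v\in V(\mathcal C_f)}(|J_v|-1)=m.
\]
Because Cases~1 and~2 do not apply, every $w_i$ has degree at least~$4$ in $G$, so $|J_i|\geq 2$.

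\textbf{Key observation.} For $U\subseteq V(\mathcal C_f)$, the graph $G[V(K)\cup(V(\mathcal C_f)\setminus U)]$ is outerplane exactly when the intervals $J_v$ with $v\in U$ cover $\mathcal D$. Indeed, deleting $v$ merges all triangles incident to $v$ with the outer face, which exposes all of $J_v$. Conversely, a vertex of $\mathcal D$ with no deleted neighbour in $L_1$ stays enclosed by the cycle formed by its retained $L_1$-neighbours. This has to be checked against the earlier definitions, using \cref{obs:union}. The four properties then correspond to whether $x$ and/or $y$ belong to the covering set $U$: Property A when both do, B or C when only one does, and D when neither does. In each property the required bound amounts to $|S|\le (k+m)/3$ plus a slack of $2/3$ for each of $x,y$ placed in $U$.

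\textbf{Choosing $S$.} I would build $S$ greedily around the cycle. Start from the arc of $\mathcal D$ not covered by the intervals of those of $x,y$ placed in $U$. Repeatedly pick the $w_i$ whose interval reaches furthest, as in interval covering on a line, since the cut at $x$ and $y$ linearises the cyclic order. Each chosen $w_i$ advances the covered prefix by at least $|J_i|-1\ge 1$ new vertices of $\mathcal D$. Moreover, the skipped $w_j$'s between two chosen ones each contribute at least one vertex of $\mathcal D$ via the identity above. Charging each chosen vertex to the at least two $\mathcal D$-vertices it newly exposes, together with one skipped $w$, should give $|S|\le (k+m)/3$ up to an additive constant. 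In the extreme configuration where all $|J_i|=2$, we have $m\approx k$ and taking alternate $w_i$'s costs about $k/2\le 2k/3$, so there is room.

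\textbf{Main obstacle.} The hardest step is eliminating the additive constant at the ends of the arc and in small cases, for instance $k\le 2$, or when $J_x$ and $J_y$ are large or small. This is exactly why the lemma offers four alternatives. My plan is to compare, for each of the four choices of which of $x,y$ enter $U$, the exact cost of the greedy cover against the corresponding bound, and to show that at least one inequality holds. The intended argument is by exploiting the identity: a long $J_x$ makes Property A or C cheap, a short $J_x$ makes B or D cheap, and symmetrically for $y$. If some configuration still fails, I would fall back to deleting a single vertex of $K$ instead of a vertex of $\mathcal C_f$ near the end of the arc, with the orange charging arrows of \cref{fig:algorithm-case4} suggesting this refinement of the basic algorithm.
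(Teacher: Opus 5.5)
Your reduction to covering $\mathcal D$ by the intervals $J_v$ is correct. It is the paper's setting: all of $K$ is kept, and deleting a vertex of $L_1$ merges all its incident faces with the outer face.

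The covering problem, however, has almost no freedom:
\begin{itemize}
\item An interior vertex of $J_v$ has $v$ as its only $L_1$-neighbour. So every $w_i$ with $|J_i|\ge 3$ is forced into $S$.
\item For the same reason $x$ is forced into $U$ when $|J_x|\ge 3$, which rules out Properties B and D; likewise for $y$.
\item Two consecutive cycle vertices can never both be kept, since their common neighbour on $\mathcal D$ would stay enclosed.
\end{itemize}
Hence any cover, greedy or not, is of the same type as the paper's basic algorithm: delete every vertex of internal degree at least $3$ and alternate among those of internal degree $2$. The interior charging you sketch is the easy part.

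The genuine gap is that you stop exactly where the lemma's content begins. You only claim $|S|\le (k+m)/3$ ``up to an additive constant'' and leave the removal of that constant as a plan with a fallback. An additive error is not acceptable: the lemma is used in every inductive step and must give exactly $2/3$. Your budget also has the wrong sign. The property corresponding to $U$ holds iff $|S|\le (k+m)/3-\frac{2}{3}|U\cap\{x,y\}|$, so putting $x$ or $y$ into $U$ costs $2/3$ each rather than giving slack.

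What is missing is an exact count plus a case analysis showing that some admissible choice of $U\cap\{x,y\}$ always works. Using your identity and the fact that every retained $w_i$ has $|J_i|=2$,
\[
|\mathcal I_L|-2\bigl(|S|+|U\cap\{x,y\}|\bigr)=\sum_{w_i\in S}(|J_i|-3)+2\,\bigl|\{i: w_i\notin S\}\bigr|+|J_x|+|J_y|-2-2\,|U\cap\{x,y\}|,
\]
and the corresponding property holds iff this is nonnegative.

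Your ``extreme configuration'' with all $|J_i|=2$ is the easy one. The critical one is $|J_i|=3$ for all $i$. There $S$ must contain every $w_i$, the first two terms vanish, and the outcome depends only on $|J_x|$ and $|J_y|$. For example, with $|J_x|=3$ and $|J_y|=2$, Property A gives $-1$, B and D are impossible, and only C works.

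This is why the paper splits on $d(x)+d(y)\geq 6$, $=5$, $=4$, $=3$, $=2$ (your $|J_x|+|J_y|$), and then on $d(z_1)$ and on the parity of the number of internal-degree-$2$ vertices. In some subcases it deletes one extra internal-degree-$2$ vertex, so that a retained $y$ (or the common neighbour $w$ of $x$ and $y$) is still exposed. It pays for deleting $x$, $y$, or that extra vertex using the neighbours of $x$ and $y$ in $K$, which the interior charging leaves uncharged.

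No vertex of $K$ ever needs to be deleted, so your fallback is beside the point. Until a case analysis of this kind is carried out, the argument does not prove the lemma.
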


\begin{proof}
Refer to \cref{fig:algorithm-case4}.
Suppose, w.l.o.g., that~$x$ comes right after~$y$ in clockwise order along the boundary of the outer face of~$L$. Then let~$y,x,z_1,\dots,z_k$ be the clockwise order of the vertices along the outer face of~$L$. The \emph{internal degree}~$d(z)$ of a vertex~$z$ on the boundary of~$L$ is the number of its neighbors in~$K$. Note that, since Cases 1 and 2 do not apply, the internal degree of every vertex among~$z_1,\dots,z_k$ is at least two. Let also~$z_{f(1)},z_{f(2)},\dots,z_{f(m)}$ be the sequence~$z_1,\dots,z_k$ restricted to the~$m\geq 0$ vertices with internal degree~$2$. Let~$xyw$ be the internal face of~$L$ incident to the edge~$xy$, and let~$x_1=y_1=w,x_2,\dots,x_{d(x)}=w_1,w_2,\dots,w_s=y_{d(y)},y_{d(y)-1},\dots,y_2$ be the clockwise order of the vertices along the outer face of~$K$, where~$x_1,x_2,\dots,x_{d(x)}$ are the neighbors of~$x$ and~$y_{d(y)},y_{d(y)-1},\dots,y_1$ are the neighbors of~$y$. 

We start by defining a \emph{basic algorithm} for the construction of the required set~$\mathcal I_L$. The algorithm is as follows. First, all the vertices of~$K$ are in~$\mathcal I_L$. Second,~$x$ and~$y$ are not in~$\mathcal I_L$. Third, each vertex~$z_j$ with~$d(z_j)\geq 3$ is not in~$\mathcal I_L$. Finally, a vertex~$z_{f(j)}$ with internal degree~$2$ is in~$\mathcal I_L$ if~$j$ is odd and not in~$\mathcal I_L$ if~$j$ is even. Note that~$\{x,y\}\cap \mathcal I_L=\emptyset$, as required.

Our actual algorithm for the definition of~$\mathcal I_L$ starts from the set constructed by the basic algorithm and then it might remove~$z_{f(m)}$ from it. We distinguish several cases, based on the values of~$d(x)$ and~$d(y)$, however it is convenient to first factor out some parts of the proof that are common to the distinct cases.

First, in order to prove that~$\mathcal I_L$ satisfies Property~A,~B,~C, or~D, we need to show that~$G[\mathcal I_L]$, or~$G[\mathcal I_L\cup \{x\}]$, or~$G[\mathcal I_L\cup \{y\}]$, or~$G[\mathcal I_L\cup \{x,z\}]$ is outerplane, respectively. For that, it suffices to show that, for each vertex in~$K$, there exists a neighbor among~$y,x,z_1,\dots,z_k$ which is not in~$\mathcal I_L$, or not in~$\mathcal I_L\cup \{x\}$, or not in~$\mathcal I_L\cup \{y\}$, or not in~$\mathcal I_L\cup \{x,y\}$, respectively. This proof strategy is reminiscent of~\cite{DBLP:journals/gc/BorradaileLS17}. Consider a vertex~$w_i$, for some~$2\leq i \leq s-1$. If~$w_i$ is a neighbor of a vertex~$z_j$ with~$d(z_j)\geq 3$, then by the construction of the basic algorithm~$z_j$ is not in~$\mathcal I_L$, hence it is the desired neighbor of~$w_i$. Each remaining vertex~$w_i$ with~$2\leq i \leq s-1$ is only neighbor of vertices~$z_j$ with~$d(z_j)=2$. Since~$L$ is internally-triangulated, it follows that~$w_i$ is a neighbor of two vertices~$z_{f(j)}$ and~$z_{f(j+1)}$ with~$d(z_{f(j)})=d(z_{f(j+1)})=2$ that are consecutive along the boundary of~$L$. By the construction of the basic algorithm, one of them is not in~$\mathcal I_L$, hence it is the desired neighbor of~$w_i$. The proof that the vertices~$x_i$ and~$y_i$ have a neighbor among~$y,x,z_1,\dots,z_k$ which is  not in~$\mathcal I_L$, or not in~$\mathcal I_L\cup \{x\}$, or not in~$\mathcal I_L\cup \{y\}$, or not in~$\mathcal I_L\cup \{x,y\}$ will be slightly different on a case-by-case basis. 

Second, in order to prove that~$\mathcal I_L$ satisfies Property~A,~B,~C, or~D, we need to show that~$|\mathcal I_L|\geq 2n_L/3$, or~$|\mathcal I_L|\geq 2(n_L-1)/3$, or~$|\mathcal I_L|\geq 2(n_L-1)/3$, or~$|\mathcal I_L|\geq 2(n_L-2)/3$, respectively. This is done by {\em charging} each vertex of~$L$ not in~$\mathcal I_L$, or each vertex of~$L$ not in~$\mathcal I_L$ and different from~$x$, or each vertex of~$L$ not in~$\mathcal I_L$ and different from~$y$, or each vertex of~$L$ not in~$\mathcal I_L$ and different from~$x$ and~$y$, respectively, to two vertices in~$\mathcal I_L$, so that each vertex in~$\mathcal I_L$ is charged with at most one vertex. We charge each vertex~$z_j$ with~$d(z_j)\geq 3$ to its first two neighbors in the sequence~$w_1,w_2,\dots,w_s$. We charge each vertex~$z_{f(j)}$ with~$d(z_{f(j)})=2$ and with~$j$ even to~$z_{f(j-1)}$ and to its first neighbor in the sequence~$w_1,w_2,\dots,w_s$; note that~$z_{f(j-1)}\in \mathcal I_L$. The charge of~$x$ and~$y$ will be slightly different on a case-by-case basis; also, whenever the algorithm removes~$z_{f(m)}$ from~$\mathcal I_L$, we need to charge that vertex to two vertices in~$\mathcal I_L$, as well. Note that each vertex~$z_{f(j)}\in \mathcal I_L$ is charged with at most one vertex, namely~$z_{f(j+1)}$, if such a vertex exists. Also, if a vertex~$w_i$ is charged as the second neighbor in the sequence~$w_1,w_2,\dots,w_s$ of a vertex~$z_j$ with~$d(z_j)\geq 3$, then it is not charged with any other vertex, since~$z_j$ is its only neighbor in~$z_1,\dots,z_k$. Furthermore, if a vertex~$w_i$ is charged as the first neighbor in the sequence~$w_1,w_2,\dots,w_s$ of a vertex~$z_j$, then it is not charged as the first neighbor in the sequence~$w_1,w_2,\dots,w_s$ of a different vertex~$z_{j'}$. Indeed, if~$w_i$ were charged as the first neighbor in the sequence~$w_1,w_2,\dots,w_s$ of two distinct vertices~$z_j$ and~$z_{j'}$ with~$j<j'$, it would be the only neighbor of~$z_j$ in~$K$, hence the degree of~$z_j$ in~$G$ would be~$3$, contradicting the fact that Case~2 does not apply. Hence, each vertex in the sequence~$w_1,w_2,\dots,w_s$ is charged with at most one vertex. Finally, observe that the vertices~$x_{d(x)-1},x_{d(x)-2},\dots,x_1=y_1,y_2,\dots,w_s=y_{d(y)}$ have not yet been charged with any vertex, while~$w_1=x_{d(x)}$ might have been charged with~$z_1$, if~$d(z_1)\geq 3$; also, for each~$j$ odd with~$1\leq j\leq m$, the first neighbor in the sequence~$w_1,w_2,\dots,w_s$ of~$z_{f(j)}$ has not yet been charged with any vertex.  
We now present our case distinction.

\begin{figure}[tb]
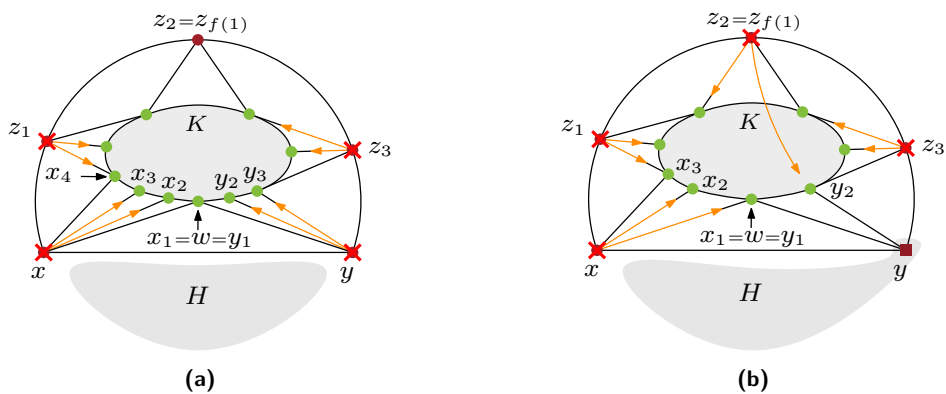

\centering
    \begin{subfigure}{0.48\textwidth}
		\centering
		\includegraphics[scale=1.2, page=2]{case4.pdf}
		\subcaption{}
        \label{fig:algorithm-case4-degree-at-least-6}
	\end{subfigure}
    \hfill
    \begin{subfigure}{0.48\textwidth}
		\centering
		\includegraphics[scale=1.2, page=3]{case4.pdf}
		\subcaption{}
        \label{fig:algorithm-case4-degree-5}
	\end{subfigure}
\caption{Illustrations for Case~4 (a) if~$d(x)+d(y)\geq 6$ and (b) if~$d(x)+d(y)=5$.}
\label{fig:algorithm-case4-degree-at-least-5}
\end{figure}

\begin{itemize}
    \item {\em Suppose first that~$d(x)+d(y)\geq 6$} (see \cref{fig:algorithm-case4-degree-at-least-6}). This implies that~$x$ and~$y$ have a total of at least~$5$ distinct neighbors in~$K$. Indeed,~$w$ contributes to both~$d(x)$ and~$d(y)$; also,~$x$ and~$y$ do not have any common neighbor~$w'$ other than~$w$, as otherwise, by planarity,~$K$ would lie inside the cycle~$xwyw'$, thus~$z_1,\dots,z_k$ would not have any neighbor in~$K$ other than~$w'$, and hence they would have degree~$3$ in~$G$, contradicting the fact that Case~2 does not apply. We define a set~$\mathcal I_L$ by means of the basic algorithm and prove that it satisfies Property~A. In order to prove that~$G[\mathcal I_L]$ is outerplane, it suffices to observe that the neighbor~$x$ of the vertices~$x_i$ is not in~$\mathcal I_L$ and the neighbor~$y$ of the vertices~$y_i$ is not in~$\mathcal I_L$. In order to prove that~$|\mathcal I_L|\geq 2n_L/3$, we charge~$x$ and~$y$ to the first and to the last two vertices in the sequence~$x_{d(x)-1},x_{d(x)-2},\dots,x_1=y_1,y_2,\dots,y_{d(y)}$, respectively. Since such sequence contains at least~$4$ vertices, each of~$x_{d(x)-1},x_{d(x)-2}$,~$\dots,x_1=y_1,y_2,\dots,y_{d(y)}$ is charged with at most one of~$x$ and~$y$.

\item {\em Suppose next that~$d(x)+d(y)=5$} (see \cref{fig:algorithm-case4-degree-5}). Assume that~$d(x)>d(y)$, as the case~$d(x)< d(y)$ can be handled symmetrically by mirroring the embedding of~$G$. We define a set~$\mathcal I_L$ by means of the basic algorithm, however if~$m$ is odd, we remove~$z_{f(m)}$ from~$\mathcal I_L$. We prove that~$\mathcal I_L$ satisfies Property~C (if~$d(x)<d(y)$, rather than~$d(x)>d(y)$, we can define a set~$\mathcal I_L$ that satisfies Property~B). In order to prove that~$G[\mathcal I_L\cup \{y\}]$ is outerplane, we observe that the neighbor~$x$ of the vertices~$x_i$ is not in~$\mathcal I_L\cup \{y\}$. For the vertices~$y_i$, we cannot rely on~$y$, which is in~$\mathcal I_L\cup \{y\}$. However,~$y_1$ is also a neighbor of~$x$, which is not in~$\mathcal I_L\cup \{y\}$. Furthermore,~$y_2$, if it exists, is also a neighbor of~$z_k$, which is not in~$\mathcal I_L\cup \{y\}$; this is obvious if~$d(z_k)\geq 3$ and it comes from the choice of excluding from~$\mathcal I_L$ the vertex~$z_{f(m)}$ if~$d(z_k)=2$.  In order to prove that~$|\mathcal I_L|\geq 2(n_L-1)/3$, we charge~$x$ to the~$2$ vertices in the sequence~$x_{d(x)-1},x_{d(x)-2},\dots,x_1=y_1,y_2,\dots,y_{d(y)-1}$ (more precisely, this sequence is~$x_3,x_2$ if~$d(x)=4$ and~$d(y)=1$, and it is~$x_2,x_1=y_1$ if~$d(x)=3$ and~$d(y)=2$). Also, we charge~$z_{f(m)}$ to its first neighbor in the sequence~$w_1,w_2,\dots,w_s$ and to~$y_{d(y)}$. Note that both such vertices were previously uncharged.

\item {\em Suppose next that~$d(x)+d(y)=4$.} Assume that~$d(x)\geq d(y)$, as the case~$d(x)< d(y)$ can be handled symmetrically by mirroring the embedding of~$G$. We need to further brake down this case into three subcases.

\begin{itemize}
    \item {\em First, suppose that~$d(x)=3$} (see \cref{fig:algorithm-case4-degree-4-x-has-degree-3}). We define a set~$\mathcal I_L$ by means of the basic algorithm and prove that it satisfies Property~C (if~$d(y)=3$, rather than~$d(x)=3$, we can define a set~$\mathcal I_L$ that satisfies Property~B). In order to prove that~$G[\mathcal I_L\cup \{y\}]$ is outerplane, we observe that the neighbor~$x$ of the vertices~$x_1=y_1,x_2,x_3$ is not in~$\mathcal I_L\cup \{y\}$. In order to prove that~$|\mathcal I_L|\geq 2(n_L-1)/3$, we charge~$x$ to~$x_1$ and~$x_2$. Note that both such vertices were previously uncharged.
    
    \item {\em Second, suppose that~$d(x)=2$ and~$m$ is even} (see \cref{fig:algorithm-case4-degree-4-x-has-degree-2-m-is-even}). We define a set~$\mathcal I_L$ by means of the basic algorithm and prove that~it satisfies Property~C (we can similarly define a set~$\mathcal I_L$ that satisfies Property~B). In order to prove that~$G[\mathcal I_L\cup \{y\}]$ is outerplane, we observe that the neighbor~$x$ of the vertices~$x_1=y_1,x_2$ is not in~$\mathcal I_L\cup \{y\}$. Also, the neighbor~$z_k$ of~$y_2$ is not in~$\mathcal I_L\cup \{y\}$; this is true either because~$d(z_k)\geq 3$, or because~$z_k$ coincides with~$z_{f(m)}$ and~$m$ is even, by assumption. In order to prove that~$|\mathcal I_L|\geq 2(n_L-1)/3$, we charge~$x$ to~$y_1$ and~$y_2$. Note that both such vertices were previously uncharged. 
        
     \item {\em Third, suppose that~$d(x)=2$ and~$m$ is odd} (see \cref{fig:algorithm-case4-degree-4-x-has-degree-2-m-is-odd}). We define a set~$\mathcal I_L$ by means of the basic algorithm and prove that~it satisfies Property~A. In order to prove that~$G[\mathcal I_L]$ is outerplane, we observe that the neighbor~$x$ of the vertices~$x_1,x_2$ is not in~$\mathcal I_L$ and the neighbor~$y$ of the vertices~$y_1,y_2$ is not in~$\mathcal I_L$. In order to prove that~$|\mathcal I_L|\geq 2n_L/3$, we charge~$x$ to~$y_1$ and~$y_2$ and~$y$ to~$z_{f(m)}$ and to the first neighbor of~$z_{f(m)}$ in the sequence~$w_1,w_2,\dots,w_s$. Note that all four charged vertices were previously uncharged. 
\end{itemize}

\begin{figure}[tb]
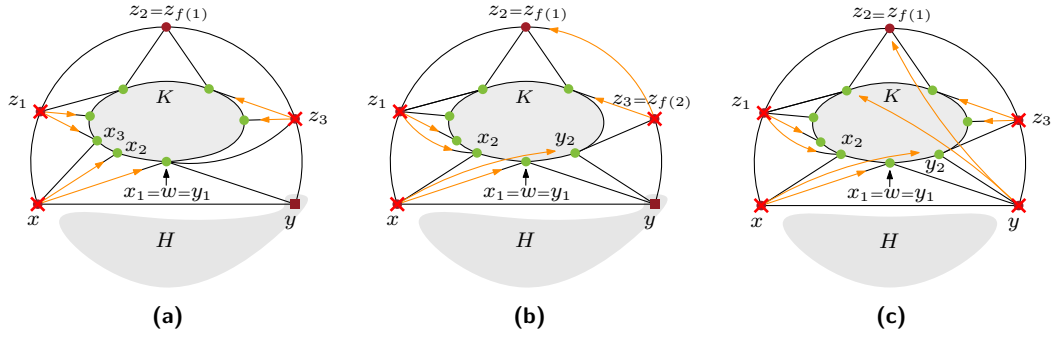

\centering
    \begin{subfigure}{0.32\textwidth}
		\centering
		\includegraphics[scale=1, page=4]{case4.pdf}
		\subcaption{}
        \label{fig:algorithm-case4-degree-4-x-has-degree-3}
	\end{subfigure}
    \hfill
    \begin{subfigure}{0.32\textwidth}
		\centering
		\includegraphics[scale=1, page=5]{case4.pdf}
		\subcaption{}
        \label{fig:algorithm-case4-degree-4-x-has-degree-2-m-is-even}
	\end{subfigure}
    \hfill
    \begin{subfigure}{0.32\textwidth}
		\centering
		\includegraphics[scale=1, page=6]{case4.pdf}
		\subcaption{}
        \label{fig:algorithm-case4-degree-4-x-has-degree-2-m-is-odd}
	\end{subfigure}
\caption{Illustrations for Case~4 when~$d(x)+d(y)=4$ (a) if~$d(x) = 3$, (b) if~$d(x) = 2$ and~$m$ is even, and (c) if~$d(x) = 2$ and~$m$ is odd.}
\label{fig:algorithm-case4-degree-4}
\end{figure}

\item {\em Suppose next that~$d(x)+d(y)=3$.} Assume that~$d(x)>d(y)$, as the case~$d(x)< d(y)$ can be handled symmetrically by mirroring the embedding of~$G$. We need to further brake down this case into three subcases.

\begin{itemize}
    \item {\em First, suppose that~$d(z_1)=2$} (see \cref{fig:algorithm-case4-degree-3-z1-has-degree-2}). We define a set~$\mathcal I_L$ by means of the basic algorithm. We prove that~$\mathcal I_L$ satisfies Property~C (if~$d(x)<d(y)$ and~$d(z_k)=2$, rather than~$d(x)>d(y)$ and~$d(z_1)=2$, we can define a set~$\mathcal I_L$ that satisfies Property~B). In order to prove that~$G[\mathcal I_L\cup \{y\}]$ is outerplane, we observe that the neighbor~$x$ of the vertices~$x_1=y_1,x_2$ is not in~$\mathcal I_L$. In order to prove that~$|\mathcal I_L|\geq 2(n_L-1)/3$, we charge~$x$ to~$x_1=y_1=y_{d(y)}$ and to~$x_2=x_{d(x)}$. Note that, while the basic algorithm always guarantees that~$y_{d(y)}$ is not charged with any vertex~$z_j$, the fact that~$x_{d(x)}$ is not charged with any vertex~$z_j$ relies on the assumption~$d(z_1)=2$. 

    \item {\em Second, suppose that~$d(z_1)\geq 3$ and~$m$ is even} (see \cref{fig:algorithm-case4-degree-3-z1-has-degree-at-least-3-m-is-even}). We define a set~$\mathcal I_L$ by means of the basic algorithm. We prove that~$\mathcal I_L$ satisfies Property~D. In order to prove that~$G[\mathcal I_L\cup \{x,y\}]$ is outerplane, we observe that the neighbor~$z_1$ of the vertex~$x_2$ is not in~$\mathcal I_L$, given that~$d(z_1)\geq 3$. Also, the neighbor~$z_k$ of the vertex~$x_1=y_1$ is not in~$\mathcal I_L\cup \{x,y\}$; this is true either because~$d(z_k)\geq 3$, or because~$z_k$ coincides with~$z_{f(m)}$ and~$m$ is even, by assumption. Since in this case we do not need to charge~$x$ and~$y$, the bound~$|\mathcal I_L|\geq 2(n_L-2)/3$ directly follows.
    
    \item {\em Third, suppose that~$d(z_1)\geq 3$ and~$m$ is odd} (see \cref{fig:algorithm-case4-degree-3-z1-has-degree-at-least-3-m-is-odd}). We define a set~$\mathcal I_L$ by means of the basic algorithm. We prove that~$\mathcal I_L$ satisfies Property~B (if~$d(x)=1<d(y)$,~$d(z_k)\geq 3$, and~$m$ is odd, rather than~$d(x)>d(y)$,~$d(z_1)\geq 3$, and~$m$ is odd, we can define a set~$\mathcal I_L$ that satisfies Property~C). In order to prove that~$G[\mathcal I_L\cup \{x\}]$ is outerplane, we observe that the neighbor~$z_1$ of the vertex~$x_2$ is not in~$\mathcal I_L$, given that~$d(z_1)\geq 3$, and that the neighbor~$y$ of the vertex~$x_1=y_1$ is not in~$\mathcal I_L\cup \{x\}$. In order to prove that~$|\mathcal I_L|\geq 2(n_L-1)/3$, we charge~$y$ to~$x_1=y_1=y_{d(y)}$ and to~$z_{f(m)}$. Note that both such vertices were previously uncharged. 
\end{itemize}

\begin{figure}[tb]
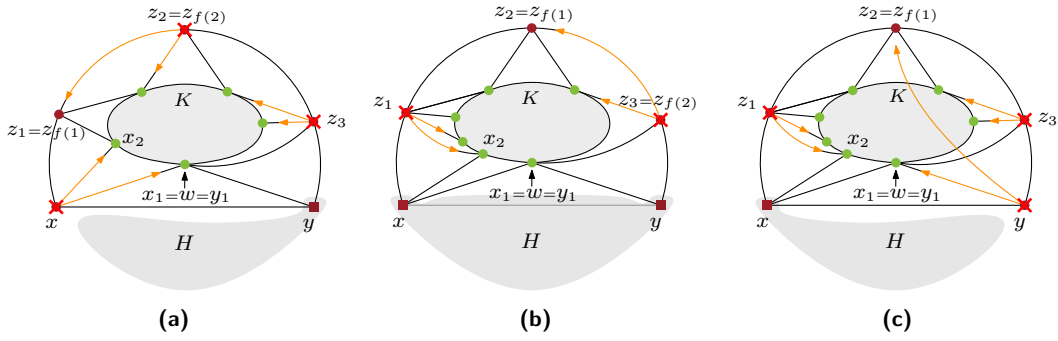

\centering
    \begin{subfigure}{0.32\textwidth}
		\centering
		\includegraphics[scale=1, page=7]{case4.pdf}
		\subcaption{}
        \label{fig:algorithm-case4-degree-3-z1-has-degree-2}
	\end{subfigure}
    \hfill
    \begin{subfigure}{0.32\textwidth}
		\centering
		\includegraphics[scale=1, page=8]{case4.pdf}
		\subcaption{}
        \label{fig:algorithm-case4-degree-3-z1-has-degree-at-least-3-m-is-even}
	\end{subfigure}
    \hfill
    \begin{subfigure}{0.32\textwidth}
		\centering
		\includegraphics[scale=1, page=9]{case4.pdf}
		\subcaption{}
        \label{fig:algorithm-case4-degree-3-z1-has-degree-at-least-3-m-is-odd}
	\end{subfigure}
\caption{Illustrations for Case~4 when~$d(x)+d(y)=3$ (a) if~$d(z_1) = 2$, (b) if~$d(z_1) \geq 3$ and~$m$ is even, and (c) if~$d(z_1) \geq 3$ and~$m$ is odd.}
\label{fig:algorithm-case4-degree-3}
\end{figure}

    \item {\em Suppose finally that~$d(x)+d(y)=2$} (see \cref{fig:algorithm-case4-degree-2}). Hence,~$x$ and~$y$ are neighbors of~$w$ and of no other vertex of~$K$. We define a set~$\mathcal I_L$ by means of the basic algorithm, however if~$m$ is odd and greater than~$1$, we remove~$z_{f(m)}$ from~$\mathcal I_L$. We prove that~$\mathcal I_L$ satisfies Property~D. In order to prove that~$G[\mathcal I_L\cup \{x,y\}]$ is outerplane, we need to show that there exists a neighbor of~$x_1=y_1=w$ among~$y,x,z_1,\dots,z_k$ that is not in~$\mathcal I_L\cup \{x,y\}$. Clearly, such a neighbor cannot be~$x$ or~$y$, since these vertices are in~$\mathcal I_L\cup \{x,y\}$. Note that~$z_1$ and~$z_k$ are neighbors of~$w$ and in this case they are guaranteed to be distinct vertices, given that~$L$ is internally-triangulated, that~$d(x)=d(y)=1$, and that~$K$ is not a single vertex. If~$z_1$ or~$z_k$ has internal degree at least~$3$, then that vertex is the desired neighbor of~$w$ not in~$\mathcal I_L$. Otherwise,~$z_1$ and~$z_k$ have both internal degree~$2$, thus~$m\geq 2$, and then~$z_k=z_{f(m)}$ is not in~$\mathcal I_L$.  In order to prove that~$|\mathcal I_L|\geq 2(n_L-2)/3$, we do not need to charge~$x$ and~$y$; however, if~$m$ is odd and greater than~$1$, then we need to charge~$z_{f(m)}$. We charge~it to its first neighbor in the sequence~$w_1,w_2,\dots,w_s$ and to the first neighbor of~$z_{f(1)}$ in the sequence~$w_1,w_2,\dots,w_s$. Note that both such vertices were previously uncharged.
\end{itemize}
This completes the proof of the lemma.
\end{proof}

\begin{figure}[htb]
\centering
\includegraphics[scale=1.2, page=10]{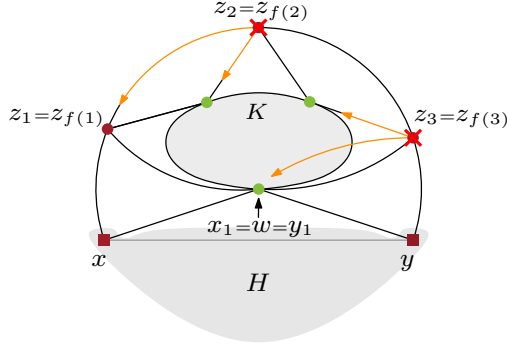}
\caption{Illustration for Case~4 when~$d(x)+d(y)=2$.}
\label{fig:algorithm-case4-degree-2}
\end{figure}

We can now apply induction as follows. Let~$\mathcal I_L$ be a set as in~\cref{le:biconnectedK}. If~$\mathcal I_L$ satisfies Property~A, let~$H$ be the subgraph of~$G$ induced by~$V(G)-V(L)$; otherwise, if it satisfies Property~B, let~$H$ be the subgraph of~$G$ induced by~$V(G)-(V(L)-\{x\})$; otherwise, if it satisfies Property~C, let~$H$ be the subgraph of~$G$ induced by~$V(G)-(V(L)-\{y\})$; otherwise, if it satisfies Property~D, let~$H$ be the subgraph of~$G$ induced by~$V(G)-(V(L)-\{x,y\})$. We apply induction on~$H$, so to find a good set~$\mathcal I_H$ for~$H$. We define~$\mathcal I=\mathcal I_H\cup \mathcal I_L$. 
We prove that~$\mathcal I$ is a good set. Note that~$|\mathcal I|=|\mathcal I_H|+|\mathcal I_L|$. If~$\mathcal I_L$ satisfies Property~A, then~$|V(H)|=n-n_L$, hence, by induction, we have~$|\mathcal I_H|\geq 2(n-n_L)/3$ and, by~\cref{le:biconnectedK}, we have~$|\mathcal I_L|\geq 2n_L/3$, thus~$|\mathcal I|\geq 2n/3$. If~$\mathcal I_L$ satisfies Property~B or Property~C, then~$|V(H)|=n-n_L+1$, hence, by induction, we have~$|\mathcal I_H|\geq 2(n-n_L+1)/3$ and, by~\cref{le:biconnectedK}, we have~$|\mathcal I_L|\geq 2(n_L-1)/3$, thus~$|\mathcal I|\geq 2n/3$. Finally, if~$\mathcal I_L$ satisfies Property~D, then~$|V(H)|=n-n_L+2$, hence, by induction, we have~$|\mathcal I_H|\geq 2(n-n_L+2)/3$ and, by~\cref{le:biconnectedK}, we have~$|\mathcal I_L|\geq 2(n_L-2)/3$, thus~$|\mathcal I|\geq 2n/3$.   
In order to prove that~$G[\mathcal I]$ is outerplane, note that~$G[\mathcal I_H]$ is outerplane by induction. If~$\mathcal I_L$ satisfies Property~A, then~$G[\mathcal I_L]$ is outerplane by~\cref{le:biconnectedK}. Since~$G[\mathcal I_H]$ and~$G[\mathcal I_L]$ are each in the outer face of the other one and share no vertex, by~\cref{obs:union} we have that~$G[\mathcal I]$ is an outerplane graph. If~$\mathcal I_L$ satisfies Property~B, then~$G[\mathcal I_L \cup (\{x\}\cap \mathcal I_H)]$ is outerplane by~\cref{le:biconnectedK}. Since~$G[\mathcal I_H]$ and~$G[\mathcal I_L \cup (\{x\}\cap \mathcal I_H)]$ are each in the outer face of the other one and share either no vertex or a single vertex, by~\cref{obs:union} we have that~$G[\mathcal I]$ is an outerplane graph. Similarly, if~$\mathcal I_L$ satisfies Property~C, then~$G[\mathcal I_L \cup (\{y\}\cap \mathcal I_H)]$ is outerplane by~\cref{le:biconnectedK}. Since~$G[\mathcal I_H]$ and~$G[\mathcal I_L \cup (\{y\}\cap \mathcal I_H)]$ are each in the outer face of the other one and share either no vertex or a single vertex, by~\cref{obs:union} we have that~$G[\mathcal I]$ is an outerplane graph. Finally, if~$\mathcal I_L$ satisfies Property~D, then~$G[\mathcal I_L \cup (\{x,y\}\cap \mathcal I_H)]$ is outerplane by~\cref{le:biconnectedK}. Since~$G[\mathcal I_H]$ and~$G[\mathcal I_L \cup (\{x,y\}\cap \mathcal I_H)]$ are each in the outer face of the other one and share either no vertex, or a single vertex, or a single edge, by~\cref{obs:union} we have that~$G[\mathcal I]$ is an outerplane graph.

In the remaining cases,~$K$ is not biconnected. Let~$B$ be an extremal leaf of~$K$. {\em Suppose first that~$B$ is a non-trivial block.}

\paragraph*{Case~5:~$B$ is non-trivial and cushy} 

Let~$n_B=|V(G_B)|$, where~$G_B$ is the cage graph of~$B$. Let~$c_B,w_1=u_B,w_2,\dots,w_{s-1},w_s=v_B$ be the clockwise order of the vertices along the outer face of~$B$, and let~$c_B,z_1=\ell_B,z_2,\dots,z_{k-1},z_k=r_B$ be the clockwise order of the vertices along the outer face of~$G_B$. The \emph{internal degree}~$d(z_i)$ of a vertex~$z_i\neq c_B$ on the boundary of~$G_B$ is the number of its neighbors in~$B$. Note that, since Cases 1 and 2 do not apply, the internal degree of every vertex among~$z_2,\dots,z_{k-1}$ is at least two. The internal degree of~$z_1$ and~$z_k$ is also at least two. Indeed,~$z_1$ and~$z_k$ are both neighbors of~$c_B$. Also,~$c_B\ell_B$ is the edge of~$G$ that precedes~$c_Bu_B$ in clockwise direction, hence the edge~$\ell_Bu_B$ exists, since~$G$ is internally-triangulated. Likewise, the edge~$r_Bv_B$ exists. We further distinguish two cases.

In {\bf Case~5.1},~$d(z_1)=d(z_k)=2$ and either (i)~$k=3$ and~$d(z_2)=2$ (see \cref{fig:algorithm-case5.1-edge-l_b-r_b-not-in-G-i-holds}), or (ii)~$k\geq 3$ and~$d(z_j)=3$, for~$j=2,\dots,k-1$ (see \cref{fig:algorithm-case5.1-edge-l_b-r_b-not-in-G-ii-holds}). Note that, if (i) holds, then~$k=3$,~$s=2$, and~$n_B=6$, while if (ii) holds, then~$s=2(k-2)+1=2k-3$ and~$n_B=3(k-2)+4=3k-2$.

\begin{figure}[tb]
\centering
    \begin{subfigure}{0.48\textwidth}
		\centering
		\includegraphics[scale=1.2, page=2]{case5.pdf}
		\subcaption{}
        \label{fig:algorithm-case5.1-edge-l_b-r_b-not-in-G-i-holds}
	\end{subfigure}
    \hfill
    \begin{subfigure}{0.48\textwidth}
		\centering
		\includegraphics[scale=1.2, page=3]{case5.pdf}
		\subcaption{}
        \label{fig:algorithm-case5.1-edge-l_b-r_b-not-in-G-ii-holds}
	\end{subfigure}
\caption{Illustrations for Case~5.1 when the edge~$\ell_B r_B$ does not belong to~$G$ (a) if~$k=3$ and~$d(z_2) = 2$, and (b) if~$k \ge 3$ and~$d(z_j) = 3$, for~$j=2,\dots,k-1$.}
\label{fig:algorithm-case5.1-edge-l_b-r_b-not-in-G}
\end{figure}

Suppose first that the edge~$\ell_Br_B$ does not belong to~$G$ (refer to \cref{fig:algorithm-case5.1-edge-l_b-r_b-not-in-G}). We construct a~$2$-outerplane graph~$H$ from~$G$ by removing the vertices in~$V(G_B)-\{c_B,\ell_B,r_B\}$ and their incident edges, and by adding the edge~$\ell_Br_B$ in the outer face. We apply induction on~$H$, so to find a good set~$\mathcal I_H$ for~$H$. We define~$\mathcal I=\mathcal I_H\cup (V(B)-\{c_B\})$. Note that~$c_B$,~$\ell_B$, and~$r_B$ belong to~$\mathcal I$ if and only if they belong to~$\mathcal I_H$. We prove that~$\mathcal I$ is a good set. First, we have~$|\mathcal I|=|\mathcal I_H|+s$ and~$|V(H)|=n-n_B+3$. If (i) holds, then~$|\mathcal I|=|\mathcal I_H|+2\geq 2(n-3)/3+2=2n/3$. If (ii) holds, then~$|\mathcal I|=|\mathcal I_H|+2(k-2)+1\geq 2(n-3(k-2)-1)/3+2(k-2)+1>2n/3$. Suppose, for a contradiction, that~$G[\mathcal I]$ contains a cycle~$\mathcal C$ which contains a vertex~$w\in \mathcal I$ in its interior. Note that~$w\notin \{w_1,\dots,w_s\}$, since~$w_1,\dots,w_s$ are incident to the outer face of~$G[\mathcal I]$, hence~$w\in \mathcal I_H$. Also,~$\mathcal C$ uses vertices among~$w_1,\dots,w_s$, as otherwise~$G[\mathcal I_H]$ would also contain~$\mathcal C$, and~$\mathcal I_H$ would not be a good set. Since any cycle in~$G$ on the vertices~$V(B)\cup \{\ell_B,r_B\}$ does not include any vertex in its interior, it follows that~$\mathcal C$ also contains vertices of~$V(H)- \{\ell_B,r_B\}$. Let then~$\mathcal P$ be the maximal path in~$\mathcal C$ on the vertices in~$V(B)\cup \{\ell_B,r_B\}$ and note that the end-vertices of~$\mathcal P$ are two among~$c_B,\ell_B,r_B$. Thus,~$\mathcal P$ can be replaced by the edge between its end-vertices, resulting in a cycle of~$H$ which also contains~$w$ in its interior and whose vertices all belong to~$\mathcal I_H$, which implies that~$\mathcal I_H$ is not a good set, a contradiction.

Suppose next that the edge~$\ell_Br_B$ belongs to~$G$. Let~$b$ be the number of vertices inside the cycle~$c_B\ell_Br_B$. Note that~$b\geq 1$, given that~$K$ is not biconnected. Let~$G'_B$ be the subgraph of~$G$ induced by the vertices inside or on the boundary of the cycle~$C'_B:=z_1,z_2,\dots,z_{k-1},z_k$. Observe that~$G'_B$ consists of~$C'_B$ and of~$K$, which is inside~$C'_B$. Hence,~$|V(G'_B)|=n_B+b$.
\begin{itemize}
    \item If~$b\geq 3$ (refer to \cref{fig:algorithm-case5.1-edge-l_b-r_b-in-G-and-b-is-at-least-3}), let~$H$ be the subgraph of~$G$ induced by~$V(G)-V(G'_B)$. We apply induction on~$H$, so to find a good set~$\mathcal I_H$ for~$H$. We define~$\mathcal I=\mathcal I_H\cup V(K)$. We prove that~$\mathcal I$ is a good set. First, we have~$|\mathcal I|=|\mathcal I_H|+|V(K)|$ and~$|V(H)|=n-|V(G'_B)|$. If (i) holds (see \cref{fig:algorithm-case5.1-edge-l_b-r_b-in-G-and-b-is-at-least-3-i-holds}), then we have~$n_B=6$ and, by the assumption~$b\geq 3$,  we have~$|V(G'_B)|\geq 9$. It follows that~$|\mathcal I|=|\mathcal I_H|+|V(G'_B)|-3\geq 2(n-|V(G'_B)|)/3 +|V(G'_B))|-3=2n/3+|V(G'_B)|/3-3\geq 2n/3$. If (ii) holds (see \cref{fig:algorithm-case5.1-edge-l_b-r_b-in-G-and-b-is-at-least-3-ii-holds}), then, by the assumption~$b\geq 3$,  we have~$|V(G'_B)|\geq 3k+1$. It follows that~$|\mathcal I|=|\mathcal I_H|+|V(G'_B)|-k \geq 2(n-|V(G'_B)|)/3 +|V(G'_B))|-k=2n/3+|V(G'_B)|/3-k> 2n/3$. Also,~$G[\mathcal I_H]$ is outerplane by induction and~$K=G[V(K)]$ is outerplane since it is a component of~$G[L_2]$. Since~$G[\mathcal I_H]$ and~$K$ are each in the outer face of the other one, and share no vertex, by~\cref{obs:union}, we have that~$G[\mathcal I]$ is an outerplane graph. 
    \item If~$b=1$ or~$b=2$ (refer to \cref{fig:algorithm-case5.1-edge-l_b-r_b-in-G-and-b-is-at-most-2}), let~$U$ be the set of vertices inside~$c_B\ell_Br_B$; we have~$U=\{u_1\}$ if~$b=1$ and~$U=\{u_1,u_2\}$ if~$b=2$. Since~$G$ is internally-triangulated, there exists one of~$\ell_B$ and~$r_B$, possibly both, that is adjacent to all the vertices in~$U$. Assume that~$\ell_B$ is adjacent to all the vertices in~$U$, as the other case is similar. Let~$H$ be the subgraph of~$G$ induced by~$V(G)-(V(G'_B)-\{r_B\})$. We apply induction on~$H$, so to find a good set~$\mathcal I_H$ for~$H$. We define~$\mathcal I=\mathcal I_H\cup V(K)$. We prove that~$\mathcal I$ is a good set. First, we have~$|\mathcal I|=|\mathcal I_H|+|V(K)|$ and~$|V(H)|=n-|V(G'_B)|+1$. If (i) holds (see \cref{fig:algorithm-case5.1-edge-l_b-r_b-in-G-and-b-is-at-most-2-i-holds}), then we have~$n_B=6$ and~$|V(G'_B)|\geq 7$. It follows that~$|\mathcal I|=|\mathcal I_H|+|V(G'_B)|-3\geq 2(n-|V(G'_B)|+1)/3 +|V(G'_B))|-3=2n/3+|V(G'_B)|/3-7/3\geq 2n/3$. If (ii) holds (see \cref{fig:algorithm-case5.1-edge-l_b-r_b-in-G-and-b-is-at-most-2-ii-holds}), then we have~$|V(G'_B)|\geq 3k-1$. It follows that~$|\mathcal I|=|\mathcal I_H|+|V(G'_B)|-k \geq 2(n-|V(G'_B)|+1)/3 +|V(G'_B))|-k=2n/3+|V(G'_B)|/3-k+2/3> 2n/3$. Also,~$G[\mathcal I_H]$ is outerplane by induction and~$G[V(K)\cup (\mathcal I_H \cap \{r_B\})]$ is outerplane since each vertex of~$K$ is adjacent to a vertex among~$z_1=\ell_B,z_2,\dots,z_{k-1}$; note that all such vertices do not belong to~$\mathcal I$. Since~$G[\mathcal I_H]$ and~$G[V(K)\cup (\mathcal I_H \cap \{r_B\})]$ are each in the outer face of the other one, and share either no vertex or a single vertex, by~\cref{obs:union}, we have that~$G[\mathcal I]$ is an outerplane graph.  
\end{itemize}

\begin{figure}[tb]
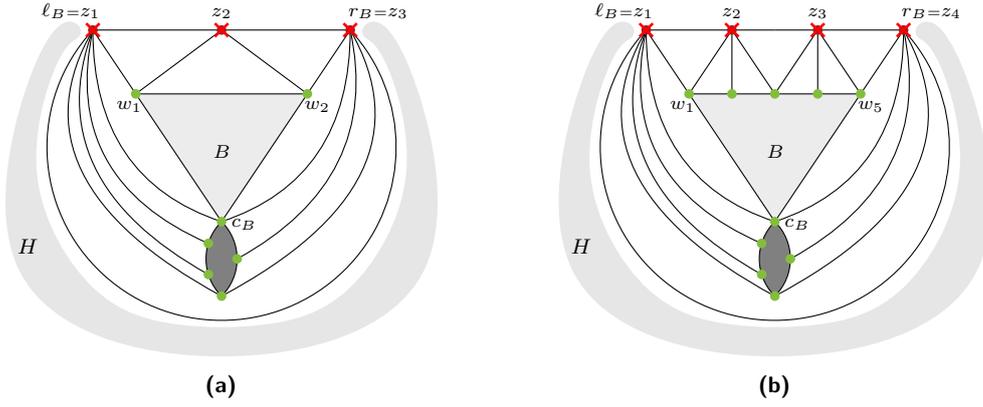

\centering
    \begin{subfigure}{0.48\textwidth}
		\centering
		\includegraphics[scale=1, page=4]{case5.pdf}
		\subcaption{}
        \label{fig:algorithm-case5.1-edge-l_b-r_b-in-G-and-b-is-at-least-3-i-holds}
	\end{subfigure}
    \hfill
    \begin{subfigure}{0.48\textwidth}
		\centering
		\includegraphics[scale=1, page=5]{case5.pdf}
		\subcaption{}
        \label{fig:algorithm-case5.1-edge-l_b-r_b-in-G-and-b-is-at-least-3-ii-holds}
	\end{subfigure}
\caption{Illustrations for Case~5.1 when the edge~$\ell_B r_B$ belongs to~$G$ and~$b \geq 3$ (a) if~$k=3$ and~$d(z_2) = 2$, and (b) if~$k \ge 3$ and~$d(z_j) = 3$, for~$j=2,\dots,k-1$.}
\label{fig:algorithm-case5.1-edge-l_b-r_b-in-G-and-b-is-at-least-3}
\end{figure}

\begin{figure}[tb]
\centering
    \begin{subfigure}{0.48\textwidth}
		\centering
		\includegraphics[scale=1.1, page=6]{case5.pdf}
		\subcaption{}
        \label{fig:algorithm-case5.1-edge-l_b-r_b-in-G-and-b-is-at-most-2-i-holds}
	\end{subfigure}
    \hfill
    \begin{subfigure}{0.48\textwidth}
		\centering
		\includegraphics[scale=1.1, page=7]{case5.pdf}
		\subcaption{}
        \label{fig:algorithm-case5.1-edge-l_b-r_b-in-G-and-b-is-at-most-2-ii-holds}
	\end{subfigure}
\caption{Illustrations for Case~5.1 when the edge~$\ell_B r_B$ belongs to~$G$ and~$b=1$ or~$b=2$ (a) if~$k=3$ and~$d(z_2) = 2$, and (b) if~$k \ge 3$ and~$d(z_j) = 3$, for~$j=2,\dots,k-1$.}
\label{fig:algorithm-case5.1-edge-l_b-r_b-in-G-and-b-is-at-most-2}
\end{figure}

In {\bf Case~5.2}, we have that Case~5.1 does not apply. Although the setting is different, the general strategy we employ to handle this case is similar to the one of Case~4. We find a set~$\mathcal I_B$ of vertices in~$G_B$ so that excluding or disregarding the vertices that~$G_B$ shares with the rest of the graph (these vertices are~$c_B$,~$\ell_B$, and~$r_B$, which play a role similar to the one that in Case~4 is played by~$x$ and~$y$) we can ensure, on one hand, that the number of vertices in~$\mathcal I_B$ is at least twice the number of vertices of~$G_B$ that are excluded from~$\mathcal I_B$ and, on the other hand, that~$\mathcal I_B$, together with the set of vertices that are inductively selected in the rest of the graph, induces an outerplane graph. Formally, we prove the following.

\begin{lemma} \label{le:cushy}
There exists a set~$\mathcal I_B\subset V(G_B)$ such that~$\{c_B,\ell_B,r_B\} \cap \mathcal I_B=\emptyset$ and such that (at least) one of the following properties is satisfied:
\begin{itemize}
    \item {\em Property~A:}~$|\mathcal I_B|\geq 2(n_B-1)/3$ and~$G[\mathcal I_B\cup\{c_B\}]$ is an outerplane graph;
    \item {\em Property~B:}~$|\mathcal I_B|\geq 2(n_B-2)/3$ and~$G[\mathcal I_B\cup\{c_B,\ell_B\}]$ is an outerplane graph; and
    \item {\em Property~C:}~$|\mathcal I_B|\geq 2(n_B-2)/3$ and~$G[\mathcal I_B\cup\{c_B,r_B\}]$ is an outerplane graph.
\end{itemize}
\end{lemma}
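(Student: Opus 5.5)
We will mimic the proof of \cref{le:biconnectedK}, with $\ell_B$ and $r_B$ playing the roles of $x$ and $y$, and with $c_B$ always treated as potentially present. We start from a \emph{basic algorithm}. Put every vertex of $B-\{c_B\}$, that is $w_1,\dots,w_s$, in $\mathcal I_B$. Exclude $c_B$, $\ell_B=z_1$ and $r_B=z_k$. Among $z_2,\dots,z_{k-1}$, exclude every vertex of internal degree at least $3$. Finally, list the vertices of internal degree $2$ among $z_2,\dots,z_{k-1}$ as $z_{f(1)},\dots,z_{f(m)}$, and include $z_{f(j)}$ if and only if $j$ is odd.

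\emph{Outerplanarity.} It suffices to show that every $w_i$ has a neighbor on the boundary of $G_B$ that lies outside the relevant set. Here the relevant set is $\mathcal I_B\cup\{c_B\}$, or $\mathcal I_B\cup\{c_B,\ell_B\}$, or $\mathcal I_B\cup\{c_B,r_B\}$. Since $c_B$ is assumed to be present, $u_B$ and $v_B$ cannot be covered by $c_B$.
\begin{itemize}
\item For the middle vertices $w_i$, the argument of \cref{le:biconnectedK} carries over verbatim. Such a $w_i$ either sees some $z_j$ with $d(z_j)\ge 3$, or sees two consecutive vertices of internal degree $2$, one of which is excluded.
\item The vertices of $B$ adjacent to $\ell_B$ are covered by $\ell_B$ under Properties A and C.
\item The vertices of $B$ adjacent to $r_B$ are covered by $r_B$ under Properties A and B.
\end{itemize}

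\emph{Counting.} We charge each excluded vertex to two vertices of $\mathcal I_B$, with every vertex of $\mathcal I_B$ receiving at most one charge. The charges of the $z_j$ with $2\le j\le k-1$ are as in \cref{le:biconnectedK}: a vertex of large internal degree is charged to its first two neighbors along $w_1,\dots,w_s$, and an even-indexed $z_{f(j)}$ is charged to $z_{f(j-1)}$ and to its first neighbor. Under Property A, $c_B$ is uncounted. We must then additionally charge $\ell_B$ and $r_B$, or just one of them under B or C. Their natural targets are the still uncharged vertices of $B$ adjacent to them, namely $u_B$ and the further neighbors of $\ell_B$ in $B$, and symmetrically on the right. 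Spare targets also arise from the first neighbors of odd-indexed $z_{f(j)}$, and from the final $z_{f(m)}$ when $m$ is odd.

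\emph{Case split.} We will split on $d(\ell_B)+d(r_B)$, on the parity of $m$, and on whether $d(z_2)\ge 3$ or $d(z_{k-1})\ge 3$.
\begin{itemize}
\item If $d(\ell_B)+d(r_B)$ is large, say at least $5$, then $\ell_B$ and $r_B$ have at least four distinct private neighbors in $B$ besides $u_B$ and $v_B$. We can pay for both and obtain Property A.
\item When the degrees are smaller, we pay for only one of them and keep the other uncounted but in the set, obtaining Property B or C. To keep $v_B$ (respectively $u_B$) covered, we may drop $z_{f(m)}$ (respectively $z_{f(1)}$) from $\mathcal I_B$, exactly as was done in Case 4. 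Dropping such a vertex costs a charge, which is paid by that vertex's first neighbor together with a spare neighbor of $r_B$.
\end{itemize}

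\emph{Main obstacle.} The difficulty is the small configurations. These are $d(\ell_B)=d(r_B)=2$ with few vertices $z_j$, where no spare vertex of $B$ exists to pay for a removed $z_{f(1)}$ or $z_{f(m)}$. We will use that Case 5.1 does not apply and that $B$ is cushy, not pesky, to exclude the tight configurations. Case 5.1 forbids the patterns ``all internal degrees equal to $2$ with $k=3$'' and ``all middle internal degrees equal to $3$''. Non-peskiness rules out the alternating $1/2$ pattern with degree $4$/$5$ boundary vertices. I therefore expect to argue that some $z_j$ has internal degree at least $4$, or that two consecutive middle vertices have internal degree $2$, or that $\ell_B$ or $r_B$ has internal degree at least $3$. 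Each of these produces one surplus uncharged vertex of $B$, which suffices to pay for $\ell_B$ or $r_B$ and yields Property B or C. Pinning down this surplus carefully is the step I expect to require the most case analysis.
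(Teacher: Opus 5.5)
Your ingredients are the paper's: the same basic set, the same covering argument, the same Case-4-style charges, and a final choice among Properties A, B, C. The gap is the rule you use to choose the property. That choice is the real content of the lemma, and your rule fails in both directions.

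\textbf{The large branch.} With the paper's convention, $d(\cdot)$ counts $c_B$. Then $d(\ell_B)+d(r_B)\ge5$ only guarantees $d(\ell_B)+d(r_B)-2\ge3$ neighbors in $B-c_B$, possibly a single one besides $u_B,v_B$, not four. For a concrete failure, take $d(\ell_B)=3$ (or $4$), $d(r_B)=2$, and $d(z_j)=3$ for $2\le j\le k-1$. This block is cushy and not in Case~5.1. Yet $|\mathcal I_B|=s=2k-2$ (resp.\ $2k-1$), while $2(n_B-1)/3=2k-\tfrac43$ (resp.\ $2k-\tfrac23$). So A fails, and one must use C, as the paper does in Case~5.2.1(i). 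Your large-branch argument also never uses cushiness. But for $d(\ell_B)=d(r_B)=3$ with all middle degrees $3$, which is exactly the pesky configuration, Property A is false. Non-peskiness is therefore needed in this branch. In your small branch it is vacuous, since a block with $d(\ell_B)=2$ is never pesky.

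\textbf{The small branch.} Take $k=5$ and $d(z_1)=\dots=d(z_5)=2$, so $B$ is a triangulated $5$-cycle, $n_B=10$, and we are not in Case~5.1. Property C forces $z_4$ out to cover $v_B=w_4$. Also $z_2,z_3$ share the private neighbor $w_2$, so at most one of them can be kept. Keeping all of $B-c_B$ therefore gives $|\mathcal I_B|\le5<16/3$. Property B is symmetric, and dropping $z_{f(m)}$ lands exactly in this situation. Yet the basic set $\{w_1,\dots,w_4,z_2,z_4\}$ satisfies A with equality. So A can fail when $d(\ell_B)+d(r_B)=6$ and be the only option when it equals $4$. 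No threshold on this sum can work.

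\textbf{What is missing.} You need the exact bookkeeping that the paper's stock of charge performs. For the basic set,
$|\mathcal I_B|=s+\lceil m/2\rceil$, $n_B=s+k+1$, and $s=d(\ell_B)+d(r_B)+2k-7+\sum_{j=2}^{k-1}(d(z_j)-3)$.
Define $E=\sum_{2\le j\le k-1,\,d(z_j)\ge3}(d(z_j)-3)+\lfloor m/2\rfloor+2[m\text{ odd}]$, so that $S_{k-1}=d(\ell_B)-3+E$. Then:
the bound of A holds iff $d(\ell_B)+d(r_B)+E\ge7$;
the bound of B or C holds iff $d(\ell_B)+d(r_B)+E\ge5$.
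On top of the count, C is available only if $d(r_B)=2$ (otherwise $v_B$ has no cage neighbor besides $r_B$) and $z_{k-1}\notin\mathcal I_B$; B is symmetric.

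The paper's Cases~5.2.1 and~5.2.2 enumerate configurations where the first inequality fails and certify B or C there. Case~5.2.3 establishes A in all remaining configurations. The exclusion of Case~5.1 and of pesky blocks is used exactly to rule out the configurations where neither works. No vertex is ever removed from the basic set. Your proposal defers precisely this analysis and sets it up along the wrong parameter, so it does not yet prove the lemma.
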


\begin{proof}
Let~$z_{f(1)},z_{f(2)},\dots,z_{f(m)}$ be the sequence~$z_2,\dots,z_{k-1}$ restricted to the~$m\geq 0$ vertices with internal degree~$2$; observe that, differently from the proof of~\cref{le:biconnectedK}, the vertices~$z_1$ and~$z_k$ are not in the sequence~$z_{f(1)},z_{f(2)},\dots,z_{f(m)}$. 

The set~$\mathcal I_B$ is defined according to the following rules. First, all the vertices of~$B$, except for~$c_B$, are in~$\mathcal I_B$. Second,~$\ell_B$ and~$r_B$ are not in~$\mathcal I_B$. Third, each vertex~$z_j$ with~$d(z_j)\geq 3$ is not in~$\mathcal I_B$. Finally, a vertex~$z_{f(j)}$ with internal degree~$2$ is in~$\mathcal I_B$ if~$j$ is odd and not in~$\mathcal I_B$ if~$j$ is even. Note that~$\{c_B,\ell_B,r_B\} \cap \mathcal I_B=\emptyset$, as required. We now distinguish three cases.

\begin{figure}[tb]
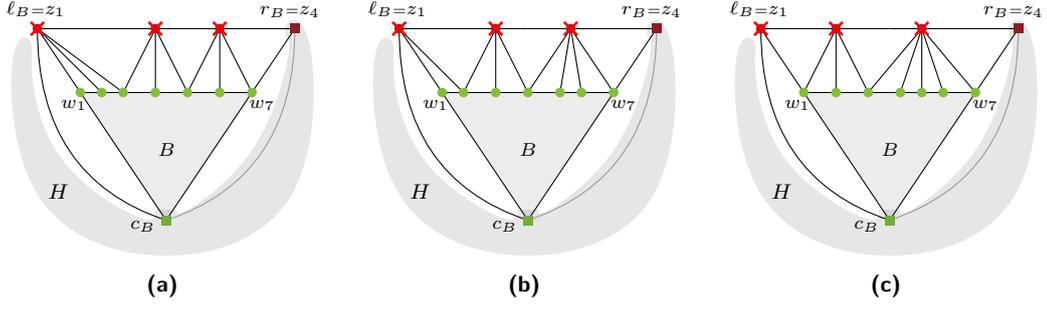

\centering
    \begin{subfigure}{0.32\textwidth}
		\centering
		\includegraphics[scale=1, page=9]{case5.pdf}
		\subcaption{}
        \label{fig:algorithm-case5.2.1-i}
	\end{subfigure}
    \hfill
    \begin{subfigure}{0.32\textwidth}
		\centering
		\includegraphics[scale=1, page=10]{case5.pdf}
		\subcaption{}
        \label{fig:algorithm-case5.2.1-ii}
    \end{subfigure}
     \hfill
    \begin{subfigure}{0.32\textwidth}
		\centering
		\includegraphics[scale=1, page=11]{case5.pdf}
		\subcaption{}
        \label{fig:algorithm-case5.2.1-iii}
	\end{subfigure}

\caption{Illustrations for Case~5.2.1 (a) if (i) holds, (b) if (ii) holds, and (c) if (iii) holds.}
\label{fig:algorithm-case5.2.1-i-ii-iii}
\end{figure}

\begin{figure}[tb]
\centering
    \begin{subfigure}{0.48\textwidth}
		\centering
		\includegraphics[scale=1.2, page=12]{case5.pdf}
		\subcaption{}
        \label{fig:algorithm-case5.2.1-iv}
	\end{subfigure}
    \hfill
    \begin{subfigure}{0.48\textwidth}
		\centering
		\includegraphics[scale=1.2, page=13]{case5.pdf}
		\subcaption{}
        \label{fig:algorithm-case5.2.1-v}
	\end{subfigure}

\caption{Illustrations for Case~5.2.1 (a) if (iv) holds and (b) if (v) holds.}
\label{fig:algorithm-case5.2.1-iv-v}
\end{figure}

\begin{figure}[tb]
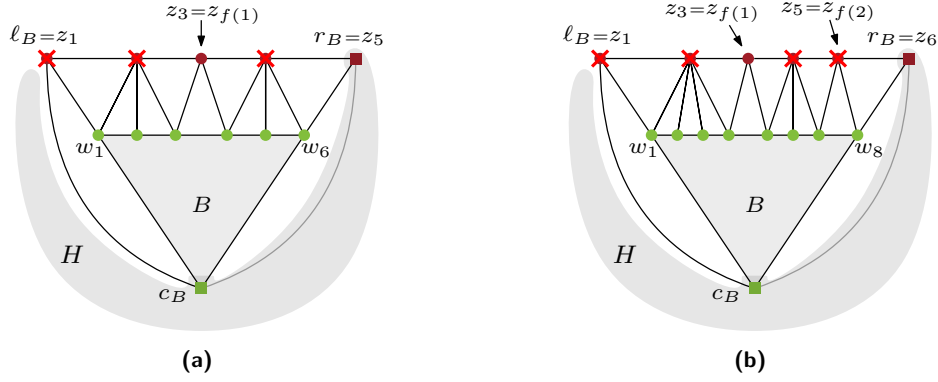

\centering
    \begin{subfigure}{0.48\textwidth}
		\centering
		\includegraphics[scale=1.2, page=14]{case5.pdf}
		\subcaption{}
        \label{fig:algorithm-case5.2.1-vi}
	\end{subfigure}
    \hfill
    \begin{subfigure}{0.48\textwidth}
		\centering
		\includegraphics[scale=1.2, page=15]{case5.pdf}
		\subcaption{}
        \label{fig:algorithm-case5.2.1-vii}
	\end{subfigure}

\caption{Illustrations for Case~5.2.1 (a) if (vi) holds and (b) if (vii) holds.}
\label{fig:algorithm-case5.2.1-vi-vii}
\end{figure}

In {\bf Case~5.2.1}, we have~$d(z_k)=2$ and at least one of the following conditions is satisfied: 
\begin{enumerate}[(i)]
\item~$3\leq d(z_1)\leq 4$ and~$d(z_2)=d(z_3)=\dots=d(z_{k-1})=3$ (see \cref{fig:algorithm-case5.2.1-i}); 
\item~$d(z_1)\leq 3$,~$d(z_i)=4$, and~$d(z_j)=3$, for some~$i\in \{2,\dots,k-1\}$ and for~$j=2,\dots,k-1$ with~$j\neq i$ (see \cref{fig:algorithm-case5.2.1-ii}); 
\item~$d(z_1)=2$,~$d(z_i)=5$, and~$d(z_j)=3$, for some~$i\in \{2,\dots,k-1\}$ and for~$j=2,\dots,k-1$ with~$j\neq i$ (see \cref{fig:algorithm-case5.2.1-iii}); 
\item~$d(z_1)=2$,~$d(z_i)=4$,~$d(z_{i'})=4$, and~$d(z_j)=3$, for some distinct~$i,i'\in \{2,\dots,k-1\}$ and for~$j=2,\dots,k-1$ with~$j\neq i$ and~$j\neq i'$ (see \cref{fig:algorithm-case5.2.1-iv}); 
\item~$m=2$ and~$d(z_j)\leq 3$ for~$j=1,\dots,k-1$, or~$m=4$,~$d(z_1)=2$, and~$d(z_j)\leq 3$ for~$j=2,\dots,k-1$ (see \cref{fig:algorithm-case5.2.1-v}); 
\item~$m=1$,~$d(z_1)=2$,~$d(z_{k-1})=3$, and~$d(z_j)\leq 3$ for~$j=2,\dots,k-2$ (see \cref{fig:algorithm-case5.2.1-vi}); or 
\item~$d(z_1)=2$,~$m=2$,~$d(z_i)=4$, and~$d(z_j)\leq 3$, for some~$i\in \{2,\dots,k-1\}$ and for~$j=2,\dots,k-1$ with~$j\neq i$ (see \cref{fig:algorithm-case5.2.1-vii}).
\end{enumerate}
In this case, we have that~$\mathcal I_B$ satisfies Property~C. 

In order to prove that~$G[\mathcal I_B\cup\{c_B,r_B\}]$ is an outerplane graph, it suffices to show that, for each vertex in~$V(B)-\{c_B\}$, there exists a neighbor among~$z_1,\dots,z_k$ which is not in~$\mathcal I_B\cup \{c_B,r_B\}$. This is done similarly to the proof of~\cref{le:biconnectedK}. Namely, consider a vertex~$w_i$, for some~$2\leq i \leq s-1$. If~$w_i$ is a neighbor of a vertex~$z_j$ with~$d(z_j)\geq 3$, then~$z_j$ is not in~$\mathcal I_B\cup \{c_B,r_B\}$, hence it is the desired neighbor of~$w_i$. Each remaining vertex~$w_i$ with~$2\leq i \leq s-1$ is only neighbor of vertices~$z_j$ with~$d(z_j)=2$ and with~$2\leq j\leq k-1$; indeed,~$w_i$ is not a neighbor of~$z_k$, given that~$d(z_k)=2$ and~$i\leq s-1$, and if~$w_i$ were a neighbor of~$z_1$, then~$d(z_1)\geq 3$, given that~$i\geq 2$. Since~$G_B$ is internally-triangulated, it follows that~$w_i$ is a neighbor of two vertices~$z_{f(j)}$ and~$z_{f(j+1)}$ with~$d(z_{f(j)})=d(z_{f(j+1)})=2$ that are consecutive along the boundary of~$G_B$. By construction, one of them is not in~$\mathcal I_B$, hence it is the desired neighbor of~$w_i$. Furthermore,~$w_1$ is adjacent to~$z_1=\ell_B$, which is not in~$\mathcal I_B\cup \{c_B,r_B\}$. Finally,~$w_s$ is adjacent to~$z_{k-1}$, since~$d(z_k)=2$, and~$z_{k-1}$ is not in~$\mathcal I_B\cup \{c_B,r_B\}$. Indeed, if~$k=2$, then~$z_{k-1}=z_1$ is not in~$\mathcal I_B\cup \{c_B,r_B\}$ by construction. If~$k\geq 3$ and~$d(z_{k-1})\geq 3$, then~$z_{k-1}$ is not in~$\mathcal I_B$ by construction. That~$d(z_{k-1})=2$ can only happen in cases (v) or (vii), where we have~$m=2$ or~$m=4$. Then~$z_{k-1}$ is not in~$\mathcal I_B$ since a vertex~$z_{f(j)}$ with~$j$ even and with~$d(z_{f(j)})=2$ is not in~$\mathcal I_B$.

We now prove that~$|\mathcal I_B|\geq 2(n_B-2)/3$. 
\begin{enumerate}[(i)]
    \item If~$d(z_1)=d(z_2)=d(z_3)=\dots=d(z_{k-1})=3$, then~$|\mathcal I_B|=s=2k-2$ and~$n_B=3k-1$. Hence, we have that~$|\mathcal I_B|=2k-2=2((3k-1)-2)/3=2(n_B-2)/3$. If~$d(z_1)=4$ and~$d(z_2)=d(z_3)=\dots=d(z_{k-1})=3$, then~$|\mathcal I_B|=s=2k-1$ and~$n_B=3k$. Hence, we have that~$|\mathcal I_B|=2k-1>2(3k-2)/3=2(n_B-2)/3$.    
    \item If~$d(z_1)=2$,~$d(z_i)=4$, and~$d(z_j)=3$, for some~$i\in \{2,\dots,k-1\}$ and for~$j=2,\dots,k-1$ with~$j\neq i$, then~$|\mathcal I_B|=s=2k-2$ and~$n_B=3k-1$. Hence, we have that~$|\mathcal I_B|=2k-2=2((3k-1)-2)/3=2(n_B-2)/3$. If~$d(z_1)=3$,~$d(z_i)=4$, and~$d(z_j)=3$, for some~$i\in \{2,\dots,k-1\}$ and for~$j=2,\dots,k-1$ with~$j\neq i$, then~$|\mathcal I_B|=s=2k-1$ and~$n_B=3k$. Hence, we have that~$|\mathcal I_B|=2k-1>2(3k-2)/3=2(n_B-2)/3$. 
    \item If~$d(z_1)=2$,~$d(z_i)=5$, and~$d(z_j)=3$, for some~$i\in \{2,\dots,k-1\}$ and for~$j=2,\dots,k-1$ with~$j\neq i$, then~$|\mathcal I_B|=s=2k-1$ and~$n_B=3k$. Hence, we have that~$|\mathcal I_B|=2k-1>2(3k-2)/3=2(n_B-2)/3$.     
    \item If~$d(z_1)=2$,~$d(z_i)=4$,~$d(z_{i'})=4$, and~$d(z_j)=3$, for some distinct~$i,i'\in \{2,\dots,k-1\}$ and for~$j=2,\dots,k-1$ with~$j\neq i$ and~$j\neq i'$, then~$|\mathcal I_B|=s=2k-1$ and~$n_B=3k$. Hence, we have that~$|\mathcal I_B|=2k-1>2(3k-2)/3=2(n_B-2)/3$.     
    \item If~$m=2$,~$d(z_1)=2$, and~$d(z_j)\leq 3$ for~$j=2\dots,k-1$, then~$|\mathcal I_B|=s+1=2k-4$ and~$n_B=3k-4$. Hence, we have that~$|\mathcal I_B|=2k-4=2((3k-4)-2)/3=2(n_B-2)/3$. If~$m=2$,~$d(z_1)=3$, and~$d(z_j)\leq 3$ for~$j=2,\dots,k-1$, then~$|\mathcal I_B|=s+1=2k-3$ and~$n_B=3k-3$. Hence, we have that~$|\mathcal I_B|=2k-3>2((3k-3)-2)/3=2(n_B-2)/3$. If~$m=4$,~$d(z_1)=2$, and~$d(z_j)\leq 3$ for~$j=2,\dots,k-1$, then~$|\mathcal I_B|=s+2=2k-5$ and~$n_B=3k-6$. Hence, we have that~$|\mathcal I_B|=2k-5>2((3k-6)-2)/3=2(n_B-2)/3$.
    \item If~$m=1$,~$d(z_1)=2$,~$d(z_{k-1})=3$, and~$d(z_j)\leq 3$ for~$j=2,\dots,k-2$, then~$|\mathcal I_B|=s+1=2k-3$ and~$n_B=3k-3$. Hence, we have that~$|\mathcal I_B|=2k-3>2((3k-3)-2)/3=2(n_B-2)/3$. 
    \item If~$d(z_1)=2$,~$m=2$,~$d(z_i)=4$, and~$d(z_j)\leq 3$, for some~$i\in \{2,\dots,k-1\}$ and for~$j=2,\dots,k-1$ with~$j\neq i$, then~$|\mathcal I_B|=s+1=2k-3$ and~$n_B=3k-3$. Hence, we have that~$|\mathcal I_B|=2k-3>2((3k-3)-2)/3=2(n_B-2)/3$.
\end{enumerate}

In {\bf Case~5.2.2}, we have~$d(z_1)=2$ and at least one of the following conditions is satisfied: 
\begin{enumerate}[(i)] 
\item~$3\leq d(z_k)\leq 4$ and~$d(z_2)=d(z_3)=\dots=d(z_{k-1})=3$; 
\item~$d(z_k)\leq 3$,~$d(z_i)=4$, and~$d(z_j)=3$, for some~$i\in \{2,\dots,k-1\}$ and for~$j=2,\dots,k-1$ with~$j\neq i$; \item~$d(z_k)=2$,~$d(z_i)=5$, and~$d(z_j)=3$, for some~$i\in \{2,\dots,k-1\}$ and for~$j=2,\dots,k-1$ with~$j\neq i$; 
\item~$d(z_k)=2$,~$d(z_i)=4$,~$d(z_{i'})=4$, and~$d(z_j)=3$, for some distinct~$i,i'\in \{2,\dots,k-1\}$ and for~$j=2,\dots,k-1$ with~$j\neq i$ and~$j\neq i'$; 
\item~$m=2$ and~$d(z_j)\leq 3$ for~$j=2,\dots,k$, or~$m=4$,~$d(z_k)=2$, and~$d(z_j)\leq 3$ for~$j=2,\dots,k-1$; 
\item~$m=1$,~$d(z_k)=2$,~$d(z_2)=3$, and~$d(z_j)\leq 3$ for~$j=3,\dots,k-1$; or 
\item~$d(z_k)=2$,~$m=2$,~$d(z_i)=4$, and~$d(z_j)\leq 3$, for some~$i\in \{2,\dots,k-1\}$ and for~$j=2,\dots,k-1$ with~$j\neq i$. 
\end{enumerate}
In this case, we have that~$\mathcal I_B$ satisfies Property~B. The proof of this statement is symmetric to Case~5.2.1. Note that Cases 5.2.1 and 5.2.2 might hold both true for~$G$. 

If we are neither in Case~5.2.1 nor in Case~5.2.2, then we are in {\bf Case~5.2.3} and we have that~$\mathcal I_B$ satisfies Property~A. The proof that~$G[\mathcal I_B\cup\{c_B\}]$ is an outerplane graph is similar, and actually simpler, to Case~5.2.1, since the neighbors~$z_1$ and~$z_k$ of~$w_1$ and~$w_s$, respectively, are now not in~$\mathcal I_B\cup \{c_B\}$. 

The proof that~$|\mathcal I_B|\geq 2(n_B-1)/3$ is similar in spirit to, however more complex than, the proof of~\cref{le:biconnectedK}. We charge each vertex of~$G_B$ not in~$\mathcal I_B$ and different from~$c_B$ to two vertices in~$\mathcal I_B$, so that each vertex in~$\mathcal I_B$ is charged with at most one vertex. 

The charging scheme is defined iteratively on the vertices~$z_1,\dots,z_k$ that do not belong to~$\mathcal I_B$. Initially, we charge~$z_1$ as follows. If~$d(z_1)\geq 3$, then~$z_1$ is charged to~$w_1,w_2$, while if~$d(z_1)=2$, then~$z_1$ is charged to~$w_1$ (and one more charged vertex will be defined later). For any~$j=1,\dots,k$, let~$\mathcal I_B^j$ denote the subset of~$\mathcal I_B$ which comprises all the vertices~$z_i\in \mathcal I_B$ with~$i\leq j$ and all the vertices~$w_1,w_2,\dots,w_{g(j)}$, where~$w_{g(j)}$ is the last neighbor of~$z_j$ in the sequence~$w_1,w_2,\dots,w_s$. 
After having charged the vertices among~$z_1,z_2,\dots,z_j$ that are not in~$\mathcal I_B^j$ to vertices in~$\mathcal I^j_B$, we have a \emph{stock of charge}~$\gamma_j$, which is an integer that represents the number of vertices in~$\mathcal I^j_B$ that have not been charged with any vertex among~$z_1,z_2,\dots,z_j$. Initially, this is defined as~$\gamma_1=d(z_1)-3$. Indeed,~$\mathcal I_B^1=\{w_1,\dots,w_{d(z_1)-1}\}$ (note that~$c_B$ contributes to~$d(z_1)$, however it is not among~$w_1,w_2,\dots,w_s$). Hence, if~$d(z_1)\geq 3$, then~$\gamma_j$ correctly represents the fact that~$z_1$ has been charged to~$w_1,w_2$ and the vertices~$w_3,w_4,\dots,w_{g(j)}$ have not been charged with~$z_1$. If~$d(z_1)=2$, then~$\gamma_j=-1$, which represents the fact that~$z_1$ needs to be charged to one additional vertex.

We now prove that, for any~$j\in\{1,\dots,k-1\}$, we can design a charging scheme that satisfies the following properties. First, the vertices~$z_1,\dots,z_j$ have been charged to two vertices in~$\mathcal I^j_B$, with the possible exception of~$z_1$ which might have been charged to~$w_1$ only, so that each vertex in~$\mathcal I^j_B$ has been charged with at most one vertex. Second, if~$d(z_1)\geq 3$ or if~$d(z_i)\neq 3$, for some~$2\leq i\leq j$, then~$z_1$ has also been charged to two vertices in~$\mathcal I^j_B$. Third, let~$m_j$ be equal to the number of vertices~$z_i$ among~$z_2,z_3,\dots,z_j$ such that~$d(z_i)=2$. Then the stock of charge~$\gamma_j$ is equal to the sum~$S_j$ of the following terms:
\begin{itemize}
\item~$d(z_1)-3$;
\item~$d(z_i)-3$, for each vertex~$z_i$ with~$d(z_i)\geq 3$ and with~$2\leq i\leq j$;
\item~$\lfloor m_j/2 \rfloor$; and
\item~$2$, if~$m_j$ is odd. 
\end{itemize}
Observe that~$S_j\geq -1$, and that~$S_j=-1$ if and only if~$d(z_1)=2$ and~$d(z_i)=3$, for~$i=2,\dots,j$. 

The initial charge of~$z_1$ trivially satisfies these properties; note that~$m_1=0$. Suppose now that the charging scheme for~$z_1,\dots,z_j$  satisfies these properties, for some~$j\in \{1,\dots,k-2\}$. We prove that the same properties are satisfied after considering~$z_{j+1}$.
\begin{itemize}
\item Suppose first that~$d(z_{j+1})=3$. Then~$|\mathcal I^{j+1}_B|=|\mathcal I^{j}_B|+2$, since~$w_{g(j+1)-1}$ and~$w_{g(j+1)}$ belong to~$\mathcal I^{j+1}_B$ and not to~$\mathcal I^{j}_B$; note that~$w_{g(j+1)-2}$ is also a neighbor of~$z_j$, hence it is already in~$\mathcal I^{j}_B$. Hence, we can charge~$z_{j+1}$ to two uncharged vertices in~$\mathcal I^{j+1}_B$ and all the required properties remain satisfied. In particular,~$z_1$ has been charged to two vertices in~$\mathcal I^{j+1}_B$ if and only if it has been charged to two vertices in~$\mathcal I^j_B$, given that~$d(z_i)\neq 3$ for some~$2\leq i\leq j+1$ if and only if~$d(z_i)\neq 3$ for some~$2\leq i\leq j$. Also,~$\gamma_{j+1}=\gamma_j$, given that~$\mathcal I^{j+1}_B$ contains two vertices more than~$\mathcal I^{j}_B$ and that~$z_{j+1}$ is charged to two vertices in~$\mathcal I^{j+1}_B$. Note that~$S_{j+1}=S_j$, given that~$m_{j+1}=m_j$ and that~$d(z_{j+1})-3=0$. Since~$\gamma_{j}=S_{j}$, by hypothesis, we have that~$\gamma_{j+1}=S_{j+1}$.
\item Suppose next that~$d(z_{j+1})>3$. Then~$|\mathcal I^{j+1}_B|=|\mathcal I^{j}_B|+d(z_{j+1})-1>|\mathcal I^{j}_B|+2$. Hence, after charging~$z_{j+1}$ to two uncharged vertices in~$\mathcal I^{j+1}_B$, there is at least one more uncharged vertex in~$\mathcal I^{j+1}_B$ than in~$\mathcal I^{j}_B$. If~$\gamma_{j}=-1$, then we also charge~$z_1$ to an uncharged vertex in~$\mathcal I^{j+1}_B$, and all the required properties remain satisfied. In particular,~$z_1$ has now been charged to two vertices in~$\mathcal I^{j+1}_B$. Also,~$\gamma_{j+1}=\gamma_j+d(z_{j+1})-3$, given that~$\mathcal I^{j+1}_B$ contains~$d(z_{j+1})-1$ vertices more than~$\mathcal I^{j}_B$ and that~$z_{j+1}$ is charged to two vertices in~$\mathcal I^{j+1}_B$; the fact that~$\gamma_{j}=-1$ if~$z_1$ has only been charged to one vertex of~$\mathcal I^{j}_B$ makes the equation~$\gamma_{j+1}=\gamma_j+d(z_{j+1})-3$ valid even if we require a third vertex of~$\mathcal I^{j+1}_B$ to be charged with~$z_1$. Note that~$S_{j+1}=S_j+d(z_{j+1})-3$, given that~$m_{j+1}=m_j$. Since~$\gamma_{j}=S_{j}$, by hypothesis, we have that~$\gamma_{j+1}=S_{j+1}$. 
\item Suppose next that~$d(z_{j+1})=2$ and that~$m_{j+1}$ is odd. Then~$|\mathcal I^{j+1}_B|=|\mathcal I^{j}_B|+2$, since~$z_{j+1}$ and~$w_{g(j+1)}$ belong to~$\mathcal I^{j+1}_B$ and not to~$\mathcal I^{j}_B$. Note that, in this case, we do not need to charge~$z_{j+1}$ to any vertex, since~$z_{j+1}$ belongs to~$\mathcal I_B$. However, if~$\gamma_{j}=-1$, then we also charge~$z_1$ to an uncharged vertex in~$\mathcal I^{j+1}_B$, and all the required properties remain satisfied. In particular,~$z_1$ has now been charged to two vertices in~$\mathcal I^{j+1}_B$. Also,~$\gamma_{j+1}=\gamma_j+2$, given that~$\mathcal I^{j+1}_B$ contains~$2$ vertices more than~$\mathcal I^{j}_B$; the fact that~$\gamma_{j}=-1$ if~$z_1$ has only been charged to one vertex of~$\mathcal I^{j}_B$ makes the equation~$\gamma_{j+1}=\gamma_j+2$ valid even if we require a vertex of~$\mathcal I^{j+1}_B$ to be charged with~$z_1$. Note that~$S_{j+1}=S_j+2$, given that~$\lfloor m_{j+1}/2 \rfloor=\lfloor m_j/2 \rfloor$ and the~$2$ additive term is present for~$S_{j+1}$, given that~$m_{j+1}$ is odd, but not for~$S_{j}$, given that~$m_j$ is even. Since~$\gamma_{j}=S_{j}$, by hypothesis, we have that~$\gamma_{j+1}=S_{j+1}$. 
\item Suppose finally that~$d(z_{j+1})=2$ and that~$m_{j+1}$ is even. Then~$|\mathcal I^{j+1}_B|=|\mathcal I^{j}_B|+1$, since~$w_{g(j+1)}+1$ belongs to~$\mathcal I^{j+1}_B$ and not to~$\mathcal I^{j}_B$. Since~$m_{j+1}$ is even,~$m_j$ is odd, hence the~$2$ additive term is present for~$S_j$, and thus~$\gamma_j\geq 1$. We can thus charge~$z_{j+1}$ to~$w_{g(j+1)}+1$ and to an uncharged vertex in~$\mathcal I^{j}_B$, and all the required properties remain satisfied. In particular,~$z_1$ was already charged to two vertices in~$\mathcal I^j_B$, given that~$\gamma_j\geq 1$. Also,~$\gamma_{j+1}=\gamma_j-1$, given that~$\mathcal I^{j+1}_B$ contains~$1$ vertex more than~$\mathcal I^{j}_B$ and that~$z_{j+1}$ is charged to two vertices in~$\mathcal I^{j+1}_B$. Also note that~$S_{j+1}=S_j-1$, given that~$\lfloor m_{j+1}/2 \rfloor=\lfloor m_j/2 \rfloor+1$ and the~$2$ additive term is present for~$S_{j}$, given that~$m_{j}$ is odd, but not for~$S_{j+1}$, given that~$m_{j+1}$ is even. Since~$\gamma_{j}=S_{j}$, by hypothesis, we have that~$\gamma_{j+1}=S_{j+1}$. 
\end{itemize}
This completes the proof that the described charging scheme satisfies the required properties. It remains to charge~$z_k$, though. In order to do that, it suffices to prove that at least one of the following three holds true: (a)~$\gamma_{k-1}\geq 2$, (b)~$\gamma_{k-1}=1$ and~$d(z_k)\geq 3$, or (c)~$d(z_k)\geq 4$. Indeed, if (a) holds then~$z_k$ can be charged to two uncharged vertices in~$\mathcal I^{k-1}_B$, if (b) holds then~$z_k$ can be charged to an uncharged vertex in~$\mathcal I^{k-1}_B$ and to~$w_s$, and if (c) holds then~$z_k$ can be charged to~$w_{s-1}$ and to~$w_s$. We distinguish three cases.
\begin{itemize}
    \item Suppose first that~$d(z_k)=2$. If~$d(z_1)\geq 5$, then~$\gamma_{k-1}\geq 2$, as all the other additive terms in the definition of~$S_{k-1}$ are non-negative, and thus (a) holds. We can hence assume that~$d(z_1)\leq 4$. We distinguish three further cases. In each case, we prove that~$\gamma_{k-1}\geq 2$, which implies that (a) holds.
    \begin{itemize}
        \item Suppose that~$d(z_1)=4$, hence~$\gamma_{k-1}$ gets a contribution of~$1$ from the first additive term in the definition of~$S_{k-1}$. If there exists a vertex~$z_i$ with~$i\in \{2,\dots,k-1\}$ and~$d(z_i)\geq 4$, then~$\gamma_{k-1}$ gets a contribution of at least~$1$ from the second additive term in the definition of~$S_{k-1}$; since the third and fourth terms are non-negative, we have~$\gamma_{k-1}\geq 2$. We can thus assume that~$d(z_i)\leq 3$, for~$i=2,\dots,k-1$. We cannot have~$d(z_2)=d(z_3)=\dots=d(z_{k-1})=3$, as otherwise we would be Case~5.2.1(i), thus we have~$m\geq 1$. If~$m$ is odd, then~$\gamma_{k-1}$ gets a contribution of~$2$ from the fourth additive term in the definition of~$S_{k-1}$, hence~$\gamma_{k-1}\geq 3$. If~$m\geq 2$, then~$\gamma_{k-1}$ gets a contribution of at least~$1$ from the third additive term in the definition of~$S_{k-1}$, hence~$\gamma_{k-1}\geq 2$. 
        \item Suppose next that~$d(z_1)=3$, hence the first additive term in the definition of~$S_{k-1}$ is equal to~$0$. If there exists a vertex~$z_i$ with~$i\in \{2,\dots,k-1\}$ and~$d(z_i)\geq 5$, then~$\gamma_{k-1}$ gets a contribution of at least~$2$ from the second additive term in the definition of~$S_{k-1}$; since the third and fourth terms are non-negative, we have~$\gamma_{k-1}\geq 2$. We can thus assume that~$d(z_i)\leq 4$, for~$i=2,\dots,k-1$. If we have two distinct vertices~$z_i$ and~$z_{i'}$ with~$i,i'\in \{2,\dots,k-1\}$ and~$d(z_i)=4$ and~$d(z_{i'})= 4$, then again~$\gamma_{k-1}$ gets a contribution of at least~$2$ from the second additive term in the definition of~$S_{k-1}$ and we have~$\gamma_{k-1}\geq 2$. We cannot have~$d(z_2)=d(z_3)=\dots=d(z_{k-1})=3$, as otherwise we would be in Case~5.2.1(i), and we cannot have~$d(z_i)=4$ and~$d(z_j)=3$, for some~$i\in \{2,\dots,k-1\}$ and for~$j=2,\dots,k-1$ with~$j\neq i$, as otherwise we would be in Case~5.2.1(ii), thus we have~$m\geq 1$. If~$m$ is odd, then~$\gamma_{k-1}$ gets a contribution of~$2$ from the fourth additive term in the definition of~$S_{k-1}$, hence~$\gamma_{k-1}\geq 2$. If~$m\geq 4$, then~$\gamma_{k-1}$ gets a contribution of at least~$2$ from the third additive term in the definition of~$S_{k-1}$, hence~$\gamma_{k-1}\geq 2$. We are left with the case~$m=2$, in which~$\gamma_{k-1}$ gets a contribution of~$1$ from the third additive term in the definition of~$S_{k-1}$ and of~$0$ from the fourth term. We cannot have~$d(z_j)\leq 3$ for~$j=2,\dots,k-1$, as otherwise we would be in Case~5.2.1(v), thus we have~$d(z_i)=4$, for some~$i\in \{2,\dots,k-1\}$, hence~$\gamma_{k-1}$ gets a contribution of~$1$ from the second additive term in the definition of~$S_{k-1}$, thus~$\gamma_{k-1}\geq 2$.
        \item Suppose next that~$d(z_1)=2$, hence the first additive term in the definition of~$S_{k-1}$ is equal to~$-1$. 
        \begin{itemize}
        \item If there exists a vertex~$z_i$ with~$i\in \{2,\dots,k-1\}$ and~$d(z_i)\geq 6$, then~$\gamma_{k-1}$ gets a contribution of at least~$3$ from the second additive term in the definition of~$S_{k-1}$; since the third and fourth terms are non-negative, we have~$\gamma_{k-1}\geq 2$. We can thus assume that~$d(z_i)\leq 5$, for~$i=2,\dots,k-1$. 
        \item Suppose that there exists a vertex~$z_i$ with~$i\in \{2,\dots,k-1\}$ and~$d(z_i)=5$, hence~$\gamma_{k-1}$ gets a contribution of at least~$2$ from the second additive term in the definition of~$S_{k-1}$. If~$m\geq 1$, then~$\gamma_{k-1}$ gets a contribution of at least~$1$ from the third or fourth additive term in the definition of~$S_{k-1}$, hence~$\gamma_{k-1}\geq 2$. We cannot have~$d(z_j)=3$, for~$j=2,\dots,k-1$ with~$j\neq i$, as otherwise we would be in Case~5.2.1(iii). If there exists a vertex~$z_{i'}$ with~$i'\in \{2,\dots,k-1\}$,~$i'\neq i$, and~$d(z_{i'})\geq 4$, then~$\gamma_{k-1}$ gets a contribution of at least~$3$ from the second additive term in the definition of~$S_{k-1}$, thus we have~$\gamma_{k-1}\geq 2$. We can thus assume that~$d(z_i)\leq 4$, for~$i=2,\dots,k-1$. 
        \item Suppose that there exists a vertex~$z_i$ with~$i\in \{2,\dots,k-1\}$ and~$d(z_i)=4$, hence~$\gamma_{k-1}$ gets a contribution of at least~$1$ from the second additive term in the definition of~$S_{k-1}$. If~$m$ is odd, then~$\gamma_{k-1}$ gets a contribution of~$2$ from the fourth additive term in the definition of~$S_{k-1}$, hence~$\gamma_{k-1}\geq 2$. If~$m\geq 4$, then~$\gamma_{k-1}$ gets a contribution of at least~$2$ from the third additive term in the definition of~$S_{k-1}$, hence~$\gamma_{k-1}\geq 2$. We cannot have~$m=2$ and~$d(z_j)\leq 3$, for~$j=2,\dots,k-1$ with~$j\neq i$, as otherwise we would be in Case~5.2.1(vii). If~$m=2$ and~$d(z_{i'})=4$, for some~$i'\in \{2,\dots,k-1\}$ with~$i'\neq i$, then~$\gamma_{k-1}$ gets a contribution of at least~$2$ from the second additive term in the definition of~$S_{k-1}$ and a contribution of~$1$ from the third additive term in the definition of~$S_{k-1}$, hence~$\gamma_{k-1}\geq 2$. This leaves us with the case~$m=0$. We cannot have~$d(z_j)=3$, for~$j=2,\dots,k-1$ with~$j\neq i$, as otherwise we would be in Case~5.2.1(ii). Also, we cannot have~$d(z_{i'})=4$ and~$d(z_j)=3$, for some~$i'\in \{2,\dots,k-1\}$ with~$i'\neq i$ and for~$j=2,\dots,k-1$ with~$j\neq i$ and~$j\neq i'$, as otherwise we would be in Case~5.2.1(iv). It follows that we have~$d(z_{i'})=4$ and~$d(z_{i''})=4$, for some~$i',i''\in \{2,\dots,k-1\}$ with~$i'\neq i$,~$i''\neq i$ and~$i''\neq i'$. Hence,~$\gamma_{k-1}$ gets a contribution of at least~$3$ from the second additive term in the definition of~$S_{k-1}$, thus we have~$\gamma_{k-1}\geq 2$. We can thus assume that~$d(z_i)\leq 3$, for~$i=2,\dots,k-1$. 
        \item We cannot have~$m=0$, as otherwise we would be in Case~5.1 (note that we would have~$k\geq 3$, since~$G$ is internally-triangulated). Also, we cannot have~$m=2$ or~$m=4$, as otherwise we would be in Case~5.2.1(v). Furthermore, we cannot have~$m=1$. Indeed, if~$k=3$, we would be in Case~5.1, if~$k\geq 4$ and~$d(z_{k-1})=3$ we would be in Case~5.2.1(vi), and if~$k\geq 4$ and~$d(z_2)=3$ we would be in Case~5.2.2(vi); note that~$d(z_{k-1})=2$ and~$d(z_2)=2$ cannot hold simultaneously, given that~$m=1$. If~$m\geq 3$ and~$m$ is odd, then~$\gamma_{k-1}$ gets a contribution of~$2$ from the fourth additive term in the definition of~$S_{k-1}$, and of at least~$1$ from the third additive term in the definition of~$S_{k-1}$, hence~$\gamma_{k-1}\geq 2$. Finally, if~$m\geq 6$, then~$\gamma_{k-1}$ gets a contribution of at least~$3$ from the third additive term in the definition of~$S_{k-1}$, hence~$\gamma_{k-1}\geq 1$. 
        \end{itemize}
    \end{itemize}
 
   \item Suppose next that~$d(z_k)=3$. If~$d(z_1)\geq 4$, then~$\gamma_{k-1}\geq 1$, as all the other additive terms in the definition of~$S_{k-1}$ are non-negative, and thus (b) holds. We can hence assume that~$d(z_1)\leq 3$. We distinguish two further cases. In each case, we prove that~$\gamma_{k-1}\geq 1$, which implies that (b) holds.
   \begin{itemize}
    \item Suppose first that~$d(z_1)=3$, hence the first additive term in the definition of~$S_{k-1}$ is equal to~$0$. If there exists a vertex~$z_i$ with~$i\in \{2,\dots,k-1\}$ and~$d(z_i)\geq 4$, then~$\gamma_{k-1}$ gets a contribution of at least~$1$ from the second additive term in the definition of~$S_{k-1}$; since the third and fourth terms are non-negative, we have~$\gamma_{k-1}\geq 1$. We can thus assume that~$d(z_i)\leq 3$, for~$i=2,\dots,k-1$. We cannot have~$d(z_2)=d(z_3)=\dots=d(z_{k-1})=3$, as otherwise~$B$ would be a pesky component, thus we have~$d(z_i)=2$, for some~$i\in \{2,\dots,k-1\}$. However, this implies that~$\gamma_{k-1}$ gets a contribution of at least~$1$ from the third or fourth additive term in the definition of~$S_{k-1}$, thus we have~$\gamma_{k-1}\geq 1$.
   \item  Suppose finally that~$d(z_1)=2$, hence the first additive term in the definition of~$S_{k-1}$ is equal to~$-1$. 
    \begin{itemize}
    \item If there exists a vertex~$z_i$ with~$i\in \{2,\dots,k-1\}$ and~$d(z_i)\geq 5$, then~$\gamma_{k-1}$ gets a contribution of at least~$2$ from the second additive term in the definition of~$S_{k-1}$; since the third and fourth terms are non-negative, we have~$\gamma_{k-1}\geq 1$. We can thus assume that~$d(z_i)\leq 4$, for~$i=2,\dots,k-1$. 
    \item Suppose that there exists a vertex~$z_i$ with~$i\in \{2,\dots,k-1\}$ and~$d(z_i)=4$, hence~$\gamma_{k-1}$ gets a contribution of at least~$1$ from the second additive term in the definition of~$S_{k-1}$. If~$m\geq 1$, then~$\gamma_{k-1}$ gets a contribution of at least~$1$ from the third or fourth additive term in the definition of~$S_{k-1}$, hence~$\gamma_{k-1}\geq 1$. We cannot have~$d(z_j)=3$, for~$j=2,\dots,k-1$ with~$j\neq i$, as otherwise we would be in Case~5.2.2(ii). If there exists a vertex~$z_{i'}$ with~$i'\in \{2,\dots,k-1\}$,~$i'\neq i$, and~$d(z_{i'})\geq 4$, then~$\gamma_{k-1}$ gets a contribution of at least~$2$ from the second additive term in the definition of~$S_{k-1}$, thus we have~$\gamma_{k-1}\geq 1$. We can thus assume that~$d(z_i)\leq 3$, for~$i=2,\dots,k-1$. 
    \item We cannot have~$m=0$, as otherwise we would be in Case~5.2.2(i). Also, we cannot have~$m=2$, as otherwise we would be in Case~5.2.2(v). If~$m$ is odd, then~$\gamma_{k-1}$ gets a contribution of~$2$ from the fourth additive term in the definition of~$S_{k-1}$, hence~$\gamma_{k-1}\geq 1$. Finally, if~$m\geq 4$, then~$\gamma_{k-1}$ gets a contribution of at least~$2$ from the third additive term in the definition of~$S_{k-1}$, hence~$\gamma_{k-1}\geq 2$. 
    \end{itemize}
   \end{itemize} 
    \item Suppose finally that~$d(z_k)\geq 4$. Then (c) holds.
\end{itemize}

This concludes the proof that~$\mathcal I_B$ satisfies Property~A and thus the proof of the lemma.
\end{proof}

We can now apply induction as follows. Let~$\mathcal I_B$ be a set as in~\cref{le:cushy}. If~$\mathcal I_B$ satisfies Property~A, let~$H$ be the subgraph of~$G$ induced by~$V(G)-(V(G_B)-\{c_B\})$; otherwise, if it satisfies Property~B, let~$H$ be the subgraph of~$G$ induced by~$V(G)-(V(G_B)-\{c_B,\ell_B\})$; otherwise, it satisfies Property~C and then let~$H$ be the subgraph of~$G$ induced by~$V(G)-(V(G_B)-\{c_B,r_B\})$. We apply induction on~$H$, so to find a good set~$\mathcal I_H$ for~$H$. We define~$\mathcal I=\mathcal I_H\cup \mathcal I_B$. 
We prove that~$\mathcal I$ is a good set. Note that~$|\mathcal I|=|\mathcal I_H|+|\mathcal I_B|$. If~$\mathcal I_B$ satisfies Property~A, then~$|V(H)|=n-n_B+1$, hence, by induction, we have~$|\mathcal I_H|\geq 2(n-n_B+1)/3$ and, by~\cref{le:cushy}, we have~$|\mathcal I_B|\geq 2(n_B-1)/3$, thus~$|\mathcal I|\geq 2n/3$. If~$\mathcal I_B$ satisfies Property~B or Property~C, then~$|V(H)|=n-n_B+2$, hence, by induction, we have~$|\mathcal I_H|\geq 2(n-n_B+2)/3$ and, by~\cref{le:cushy}, we have~$|\mathcal I_B|\geq 2(n_B-2)/3$, thus~$|\mathcal I|\geq 2n/3$.   
In order to prove that~$G[\mathcal I]$ is outerplane, note that~$G[\mathcal I_H]$ is outerplane by induction. If~$\mathcal I_B$ satisfies Property~A, then~$G[\mathcal I_B \cup (\{c_B\}\cap \mathcal I_H)]$ is outerplane by~\cref{le:cushy}. Since~$G[\mathcal I_H]$ and~$G[\mathcal I_B \cup (\{c_B\}\cap \mathcal I_H)]$ are each in the outer face of the other one and share either no vertex or one vertex, by~\cref{obs:union} we have that~$G[\mathcal I]$ is an outerplane graph. If~$\mathcal I_B$ satisfies Property~B, then~$G[\mathcal I_B \cup (\{c_B,\ell_B\}\cap \mathcal I_H)]$ is outerplane by~\cref{le:cushy}. Since~$G[\mathcal I_H]$ and~$G[\mathcal I_B \cup (\{c_B,\ell_B\}\cap \mathcal I_H)]$ are each in the outer face of the other one and share either no vertex, or a single vertex, or a single edge, by~\cref{obs:union} we have that~$G[\mathcal I]$ is an outerplane graph. Similarly, if~$\mathcal I_B$ satisfies Property~C, then~$G[\mathcal I_B \cup (\{c_B,r_B\}\cap \mathcal I_H)]$ is outerplane by~\cref{le:cushy}. Since~$G[\mathcal I_H]$ and~$G[\mathcal I_B \cup (\{c_B,r_B\}\cap \mathcal I_H)]$ are each in the outer face of the other one and share either no vertex, or a single vertex, or a single edge, by~\cref{obs:union} we have that~$G[\mathcal I]$ is an outerplane graph. 

As long as~$B$ is a non-trivial block, we can now assume that~$B$ is pesky.

\begin{figure}[tb]
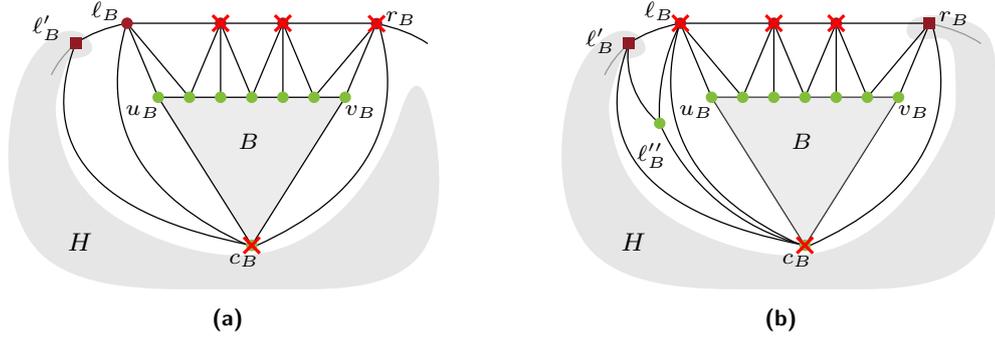

\centering
    \begin{subfigure}{0.48\textwidth}
		\centering
		\includegraphics[scale=1.2, page=7]{algorithm.pdf}
		\subcaption{}
        \label{fig:algorithm-case6}
	\end{subfigure}
    \hfill
    \begin{subfigure}{0.48\textwidth}
		\centering
		\includegraphics[scale=1.2, page=8]{algorithm.pdf}
		\subcaption{}
        \label{fig:algorithm-case7}
	\end{subfigure}

\caption{Illustrations for (a) Case~6 and (b) Case~7.}
\label{fig:algorithm-cases6-7}
\end{figure}

\paragraph*{Case~6:~$B$ is non-trivial and pesky, the edge~$c_B \ell'_B$ exists, and the~cycle~$c_B \ell_B \ell'_B$ bounds an internal face of~$G$}

Refer to \cref{fig:algorithm-case6}. Let~$H$ be the subgraph of~$G$ induced by~$V(G)-V(G_B)$. We apply induction on~$H$, so to find a good set~$\mathcal I_H$ for~$H$. Let~$\mathcal I_B = \{\ell_B\} \cup (V(B)-\{c_B\})$. We define~$\mathcal I=\mathcal I_H\cup \mathcal I_B$. Note that~$c_B$ and~$r_B$ do not belong to~$\mathcal I$, that~$\ell_B$ belongs to~$\mathcal I$, and that~$\ell'_B$ might or might not belong to~$\mathcal I$. 
We prove that~$\mathcal I$ is a good set. First,  we have~$|\mathcal I|=|\mathcal I_H|+|V(B)|$. By induction,~$|\mathcal I_H|\geq 2|V(H)|/3$; also, by~\cref{obs:pesky},~$|V(B)|=2|V(G_B)|/3$. Hence,~$|\mathcal I|\geq 2(|V(H)|+|V(G_B)|)/3=2n/3$. Also,~$G[\mathcal I_H]$ is outerplane by induction and~$G[\mathcal I_B \cup (\{\ell'_B\}\cap \mathcal I_H)]$ is outerplane, as well, given that~$B$ is an outerplane graph, given that~$\ell_B$ is outside~$B$, where~$\ell_B u_B u'_B$ bounds a face of~$G$ and~$\ell_B$ is not adjacent to any vertex of~$\mathcal I_B$ different from~$u_B$ and~$u'_B$, and given that~$\ell'_B$ is outside~$G[\mathcal I_B]$ and not adjacent to any vertex of~$\mathcal I_B$ different from~$\ell_B$. Since~$G[\mathcal I_H]$ and~$G[\mathcal I_B \cup (\{\ell'_B\}\cap \mathcal I_H)]$ are each in the outer face of the other one, and share either no vertex or a single vertex, by~\cref{obs:union}, we have that~$G[\mathcal I]$ is an outerplane graph.

\paragraph*{Case~6':~$B$ is non-trivial and pesky, the edge~$c_B r'_B$ exists, and the~cycle~$c_B r_B r'_B$ bounds an internal face of~$G$}

This case can be discussed symmetrically to Case~6. 

\paragraph*{Case~7:~$B$ is non-trivial and pesky, the edge~$c_B \ell'_B$ exists, and the~cycle~$c_B \ell_B \ell'_B$ contains in its interior a unique vertex}

Refer to \cref{fig:algorithm-case7}.
Let~$\ell''_B$ be the vertex contained in the cycle~$c_B \ell_B \ell'_B$. Let~$H$ be the subgraph of~$G$ induced by~$V(G)-(V(G_B)-\{r_B\} \cup \{\ell''_B\})$. We apply induction on~$H$, so to find a good set~$\mathcal I_H$ for~$H$. Let~$\mathcal I_B = \{\ell''_B\} \cup (V(B)-\{c_B\})$. We define~$\mathcal I=\mathcal I_H\cup \mathcal I_B$. Note that~$c_B$ and~$\ell_B$ do not belong to~$\mathcal I$, while each of~$\ell'_B$ and~$r_B$  might or might not belong to~$\mathcal I$. We prove that~$\mathcal I$ is a good set. First,  we have~$|\mathcal I|=|\mathcal I_H|+|V(B)|$, hence~$|\mathcal I|\geq 2n/3$ follows as in Case~6. Also,~$G[\mathcal I_H]$ is outerplane by induction and~$G[V(B)-\{c_B\} \cup (\{r_B\}\cap \mathcal I_H)]$ is outerplane, as well, given that~$B$ is an outerplane graph and given that~$r_B$ is outside~$B$, where~$r_B v_B v'_B$ bounds a face of~$G$ and~$r_B$ is not adjacent to any vertex of~$V(B)-\{c_B\}$ different from~$v_B$ and~$v'_B$. Since~$G[\mathcal I_H]$ and~$G[V(B)-\{c_B\} \cup (\{r_B\}\cap \mathcal I_H)]$ are each in the outer face of the other one, and share either no vertex or a single vertex, by~\cref{obs:union}, we have that~$G[\mathcal I - \{\ell''_B\}]$ is an outerplane graph. Furthermore,~$\ell'_B$ and~$\ell''_B$ induce an edge, hence an outerplane graph. Since~$G[\mathcal I - \{\ell''_B\}]$ and~$G[\{\ell''_B\} \cup (\mathcal I_H\cap \{\ell'_B\})]$ are each in the outer face of the other one and share either no vertex or a single vertex, by~\cref{obs:union}, we have that~$G[\mathcal I]$ is an outerplane graph.



\paragraph*{Case~7':~$B$ is non-trivial and pesky, the edge~$c_B r'_B$ exists, and the~cycle~$c_B r_B r'_B$ contains in its interior a unique vertex~$r''_B$} 

This case can be discussed symmetrically to Case~7. 

\begin{figure}[tb]
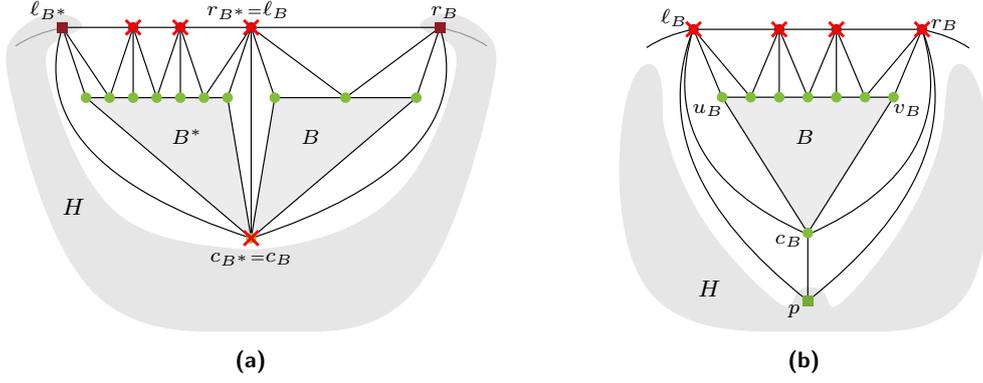

\centering
    \begin{subfigure}{0.48\textwidth}
		\centering
		\includegraphics[scale=1.1, page=9]{algorithm.pdf}
		\subcaption{}
        \label{fig:algorithm-case8}
	\end{subfigure}
    \hfill
    \begin{subfigure}{0.48\textwidth}
		\centering
		\includegraphics[scale=1.1, page=10]{algorithm.pdf}
		\subcaption{}
        \label{fig:algorithm-case11}
	\end{subfigure}

\caption{Illustrations for (a) Case~8 and (b) Case~11.}
\label{fig:algorithm-cases8-11}
\end{figure}

\paragraph*{Case~8:~$B$ is non-trivial and pesky, and there exists a non-trivial pesky extremal leaf~$B^*\neq B$ in~$K$ such that the link-vertex~$c_{B^*}$ of~$B^*$ is~$c_B$ and the right cage vertex~$r_{B^*}$ of~$B^*$ is~$\ell_B$}

Refer to \cref{fig:algorithm-case8}. Let~$H$ be the subgraph of~$G$ induced by~$V(G)-(V(G_B)-\{r_B\})-(V(G_{B^*})-\{\ell_{B^*}\})$. We apply induction on~$H$, so to find a good set~$\mathcal I_H$ for~$H$. Let~$\mathcal I_B = (V(B)\cup V(B^*))-\{c_B\}$. We define~$\mathcal I=\mathcal I_H\cup \mathcal I_B$. Note that~$c_B=c_{B^*}$ and~$r_{B^*}=\ell_B$ do not belong to~$\mathcal I$, while each of~$\ell_{B^*}$ and~$r_B$ might or might not belong to~$\mathcal I$. 
We prove that~$\mathcal I$ is a good set. First,  we have~$|\mathcal I|=|\mathcal I_H|+|V(B)|$. By induction,~$|\mathcal I_H|\geq 2|V(H)|/3$; also, by~\cref{obs:pesky},~$|V(B)|=2|V(G_B)|/3$ and~$|V(B^*)|=2|V(G_{B^*})|/3$. The number of vertices of 
$H$ is~$n-(|V(G_B)| + |V(G_{B^*})|-4)$, since the vertices of~$G$ that do not belong to~$H$ are those of~$G_B$ and~$G_{B^*}$, except for~$\ell_{B^*}$ and~$r_B$, and since~$G_B$ and~$G_{B^*}$ share the vertices~$r_{B^*}=\ell_B$ and~$c_B=c_{B^*}$. Also note that~$|\mathcal I_B|=|V(B)|+|V(B^*)|-2$, as~$c_B$ belongs both to~$B$ and~$B^*$ and is not part of~$\mathcal I_B$. It follows that~$|\mathcal I|\geq 2(n-|V(G_B)|-|V(G_{B^*})|+4)/3+|V(B)|+|V(B^*)|-2=2(n-|V(G_B)|-|V(G_{B^*})|)/3+2(|V(G_B)|+|V(G_{B^*})|)/3+8/3-2=2n/3+2/3>2n/3$. Also,~$G[\mathcal I_H]$ is outerplane by induction and~$G[(V(B)-\{c_B\}) \cup (\{r_B\} \cap \mathcal I_H)]$ is outerplane, as well, given that~$r_B$ is outside~$B$, that~$r_B v_B v'_B$ bounds a face of~$G$, and that~$r_B$ is not adjacent to any vertex of~$V(B)-\{c_B\}$ different from~$v_B$ and~$v'_B$. Since~$G[\mathcal I_H]$ and~$G[(V(B)-\{c_B\}) \cup (\{r_B\} \cap \mathcal I_H)]$ are each in the outer face of the other one, and share either no vertex or a single vertex, by~\cref{obs:union}, we have that~$G[\mathcal I_H \cup (V(B)-\{c_B\})]$ is an outerplane graph. Similarly,~$G[(V(B^*)-\{c_B\})\cup (\{\ell_{B^*}\} \cap \mathcal I_H)]$ is outerplane, and since~$G[\mathcal I_H \cup (V(B)-\{c_B\})]$ and~$G[(V(B^*)-\{c_B\})\cup (\{\ell_{B^*}\} \cap \mathcal I_H)]$ are each in the outer face of the other one, and share either no vertex or a single vertex, by~\cref{obs:union}, we have that~$G[\mathcal I]$ is an outerplane graph.


\paragraph*{Case~8':~$B$ is non-trivial and pesky, and there exists a non-trivial pesky extremal leaf~$B^*\neq B$ in~$K$ such that the link-vertex~$c_{B^*}$ of~$B^*$ is~$c_B$ and the left cage vertex~$\ell_{B^*}$ of~$B^*$ is~$r_B$} 

This case can be discussed symmetrically to Case~8. 

{\em Suppose next that~$B$ is a trivial block}; recall that, in this case,~$B$ is an edge~$c_Bd_B$, where~$c_B$ is a cutvertex of~$K$. 

\begin{figure}[tb]
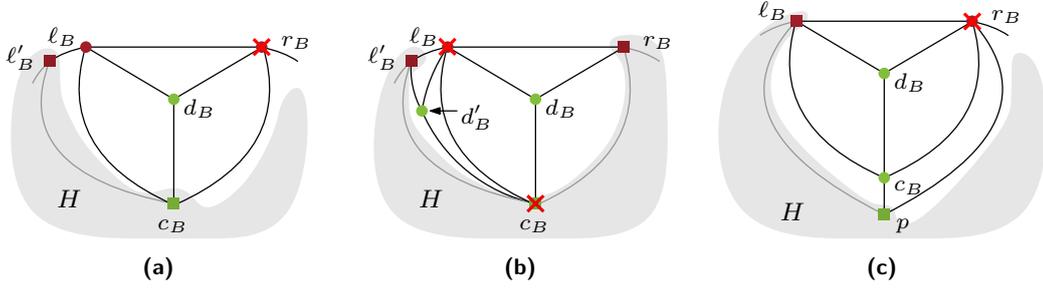

\centering
    \begin{subfigure}{0.32\textwidth}
		\centering
		\includegraphics[scale=1.2, page=11]{algorithm.pdf}
		\subcaption{}
        \label{fig:algorithm-case9}
	\end{subfigure}
    \hfill
    \begin{subfigure}{0.32\textwidth}
		\centering
		\includegraphics[scale=1.2, page=12]{algorithm.pdf}
		\subcaption{}
        \label{fig:algorithm-case10}
	\end{subfigure}
    \hfill
    \begin{subfigure}{0.32\textwidth}
		\centering
		\includegraphics[scale=1.2, page=13]{algorithm.pdf}
		\subcaption{}
        \label{fig:algorithm-case12}
	\end{subfigure}

\caption{Illustrations for (a) Case~9, (b) Case~10, and (c) Case~12.}
\label{fig:algorithm-cases9-10-12}
\end{figure}

\paragraph*{Case~9:~$B$ is trivial, the edge~$c_B \ell'_B$ exists, and the cycle~$c_B \ell_B \ell'_B$ bounds an internal face of~$G$}

Refer to \cref{fig:algorithm-case9}. Let~$H$ be the subgraph of~$G$ induced by~$V(G)-\{\ell_B,r_B,d_B\}$. We apply induction on~$H$, so to find a good set~$\mathcal I_H$ for~$H$. We define~$\mathcal I=\mathcal I_H\cup \{\ell_B,d_B\}$. Note that~$r_B$ does not belong to~$\mathcal I$, while~$c_B$ and~$\ell'_B$ might or might not belong to~$\mathcal I$. 
We prove that~$\mathcal I$ is a good set. First,  we have~$|\mathcal I|=|\mathcal I_H|+2$.  Note that~$|V(H)|=n-3$, hence, by induction, we have~$|\mathcal I_H|\geq 2(n-3)/3$. It follows that~$|\mathcal I|\geq 2(n-3)/3+2=2n/3$. Also,~$G[\mathcal I_H]$ is outerplane by induction and the subgraph of~$G$ induced by~$c_B$,~$\ell'_B$,~$\ell_B$, and~$d_B$ is outerplane, as well, since it is composed of the cycles~$c_B \ell_B \ell'_B$ and~$c_B d_B \ell_B$, with~$d_B$ outside the former cycle. Since~$G[\mathcal I_H]$ and~$G[\{c_B,\ell'_B,\ell_B,d_B\} \cap \mathcal I_H]$ are each in the outer face of the other one, and share either no vertex, or a single vertex, or a single edge, by~\cref{obs:union}, we have that~$G[\mathcal I]$ is an outerplane graph. 

\paragraph*{Case~9':~$B$ is trivial, the edge~$c_B r'_B$ exists and the~cycle~$c_B r_B r'_B$ bounds an internal face of~$G$}

This case can be discussed symmetrically to Case~9. 

\paragraph*{Case~10:~$B$ is trivial, the edge~$c_B \ell'_B$ exists and the~cycle~$c_B \ell_B \ell'_B$ contains in its interior a unique vertex}

Refer to \cref{fig:algorithm-case10}.
Let~$d'_B$ be the vertex contained in the cycle~$c_B \ell_B \ell'_B$. Let~$H$ be the subgraph of~$G$ induced by~$V(G)-\{\ell_B,d_B,d'_B\}$. We apply induction on~$H$, so to find a good set~$\mathcal I_H$ for~$H$. We define~$\mathcal I=\mathcal I_H\cup \{d_B,d'_B\}$. Note that~$\ell_B$ does not belong to~$\mathcal I$, while~$c_B$,~$r_B$, and~$\ell'_B$ might or might not belong to~$\mathcal I$. 
We prove that~$\mathcal I$ is a good set. First,  we have~$|\mathcal I|=|\mathcal I_H|+2$ and~$|V(H)|=n-3$, hence~$|\mathcal I|\geq 2n/3$ follows as in Case~6. Also,~$G[\mathcal I_H]$ is outerplane by induction, and~$G[\{c_B,\ell'_B,d'_B\}]$ and~$G[\{c_B,r_B,d_B\}]$ are outerplane graphs, as well, since they are cycles. Since~$G[\mathcal I_H]$ and~$G[\{c_B,\ell'_B,d'_B\} \cap \mathcal I_H]$ are each in the outer face of the other one, and share either no vertex, or a single vertex, or a single edge, by~\cref{obs:union}, we have that~$G[\mathcal I_H \cup \{d'_B\}]$ is an outerplane graph. Also, since~$G[\mathcal I_H \cup \{d'_B\}]$ and~$G[\{c_B,r_B,d_B\} \cap \mathcal I_H]$ are each in the outer face of the other one, and share either no vertex, or a single vertex, or a single edge, by~\cref{obs:union}, we have that~$G[\mathcal I]$ is an outerplane graph.

\paragraph*{Case~10':~$B$ is trivial, the edge~$c_B r'_B$ exists, and the~cycle~$c_B r_B r'_B$ contains in its interior a unique vertex}

This case can be discussed symmetrically to Case~10.

By~\cref{le:case-distinction-blocks}, we can now assume that~$B$ is the only child of~$c_B$ in~$T_K$. The next two cases deal with the situation in which the parent~$B_P$ of~$c_B$ in~$T_K$ is a trivial block and~$B$ is non-trivial or trivial, respectively. 

\paragraph*{Case~11:~$B$ is non-trivial and pesky, and the parent~$B_P$ of~$c_B$ in~$T_K$ is a trivial block}

Refer to \cref{fig:algorithm-case11}.
Let~$pc_B$ be the edge corresponding to~$B_P$. By~\cref{le:case11}, the edges~$p \ell_B$ and~$p r_B$ belong to~$G$ and the cycles~$p \ell_B c_B$ and~$p r_B c_B$ delimit faces of~$G$. Let~$H$ be the subgraph of~$G$ induced by~$V(G)-V(G_B)$. We apply induction on~$H$, so to find a good set~$\mathcal I_H$ for~$H$. Let~$\mathcal I_B = V(B)$. We define~$\mathcal I=\mathcal I_H\cup \mathcal I_B$. Note that~$\ell_B$ and~$r_B$ do not belong to~$\mathcal I$, that~$c_B$ belongs to~$\mathcal I$, and that~$p$ might or might not belong to~$\mathcal I$. 
We prove that~$\mathcal I$ is a good set. First,  we have~$|\mathcal I|=|\mathcal I_H|+|V(B)|$, hence~$|\mathcal I|\geq 2n/3$ follows as in Case~6.  Also,~$G[\mathcal I_H]$ is outerplane by induction and~$G[\mathcal I_B \cup (\{p\}\cap \mathcal I_H)]$ is outerplane, as well, given that~$B$ is an outerplane graph and given that the edge~$pc_B$ is outside~$B$. Since~$G[\mathcal I_H]$ and~$G[\mathcal I_B \cup (\{p\}\cap \mathcal I_H)]$ are each in the outer face of the other one, and share either no vertex or a single vertex, by~\cref{obs:union}, we have that~$G[\mathcal I]$ is an outerplane graph.


\paragraph*{Case~12:~$B$ is trivial and the parent~$B_P$ of~$c_B$ in~$T_K$ is a trivial block}

Refer to \cref{fig:algorithm-case12}.
Let~$pc_B$ be the edge corresponding to the parent of~$c_B$ in~$T_K$. By~\cref{le:case11}, the edges~$p \ell_B$ and~$p r_B$ belong to~$G$ and the cycles~$p \ell_B c_B$ and~$p r_B c_B$ delimit faces of~$G$.  Let~$H$ be the subgraph of~$G$ induced by~$V(G)-\{c_B,d_B,r_B\}$. We apply induction on~$H$, so to find a good set~$\mathcal I_H$ for~$H$. We define~$\mathcal I=\mathcal I_H\cup \{c_B,d_B\}$. Note that~$r_B$ does not belong to~$\mathcal I$, while~$p$ and~$\ell_B$ might or might not belong to~$\mathcal I$. 
We prove that~$\mathcal I$ is a good set. First,  we have~$|\mathcal I|=|\mathcal I_H|+2$ and~$|V(H)|=n-3$, hence~$|\mathcal I|\geq 2n/3$ follows as in Case~6. Also,~$G[\mathcal I_H]$ is outerplane by induction and~$G[\{p,c_B,d_B,\ell_B\}]$ is outerplane, as well, since it is composed of the cycles~$p c_B \ell_B$ and~$c_B d_B \ell_B$, with~$d_B$ outside the former cycle. Since~$G[\mathcal I_H]$ and~$G[\{p,c_B,\ell_B,d_B\} \cap \mathcal I_H]$ are each in the outer face of the other one, and share either no vertex, or a single vertex, or a single edge, by~\cref{obs:union}, we have that~$G[\mathcal I]$ is an outerplane graph. 

\paragraph*{Case~13: Neither of Cases 1--12 applies}

Let~$K$ be a terminal component of~$G[L_2]$ and~$B_P$ be a biconnected component of~$K$ as in~\cref{le:case13}. If~$B_P$ is not the root of~$T_K$, then let~$c_P$ be the parent of~$B_P$ in~$T_K$. Otherwise, let~$l$ be the leaf of~$T^*$ corresponding to the face of~$G[L_1]$ in which~$K$ lies, and let~$xy$ be the edge of~$G[L_1]$ dual to the edge of~$T^*$ from~$l$ to its parent (or let~$e^*=xy$ if~$l$ is the root of~$T^*$). Then~$c_P$ is the vertex of~$K$ such that the~$3$-cycle~$xyc_P$ bounds an internal face of~$G$. 

Let~$u_P$ and~$v_P$ be the neighbors of~$c_P$ on the boundary of the outer face of~$B_P$, where~$u_P$,~$c_P$, and~$v_P$ appear in this counter-clockwise order along the outer face of~$B_P$. Let~$c_P\ell_P$ be the edge of~$G$ that precedes~$c_Pu_P$ in clockwise direction around~$c_P$ and note that~$\ell_P\in L_1$. Symmetrically, let~$c_Pr_P$ be the edge of~$G$ that follows~$c_Pv_P$ in clockwise direction around~$c_P$ and note that~$r_P\in L_1$. Note that, since~$G$ is internally-triangulated, the edges~$\ell_Pu_P$ and~$r_Pv_P$ exist and the cycles~$c_P\ell_Pu_P$ and~$c_Pr_Pv_P$ bound internal faces of~$G$. 

Let~$G_P$ be the subgraph of~$G$ whose vertices and edges are inside or on the boundary of the cycle composed of the path~$\ell_Pc_Pr_P$ and of the path obtained by traversing in clockwise direction the outer face of~$G$ from~$\ell_P$ to~$r_P$. Let~$n_P=|V(G_P)|$. Although the setting is different, the general strategy we employ to handle this final case is similar to the one of Cases 4 and 5.2 and supported by the following lemma.

\begin{lemma} \label{le:final-case}
There exists a set~$\mathcal I_P\subset V(G_P)$ such that~$\{c_P,\ell_P,r_P\} \cap \mathcal I_P=\emptyset$ and such that (at least) one of the following properties is satisfied:
\begin{itemize}
    \item {\em Property~A:}~$|\mathcal I_P|\geq 2(n_P-1)/3$ and~$G[\mathcal I_P\cup\{c_P\}]$ is an outerplane graph;
    \item {\em Property~B:}~$|\mathcal I_P|\geq 2(n_P-2)/3$ and~$G[\mathcal I_P\cup\{c_P,\ell_P\}]$ is an outerplane graph; and
    \item {\em Property~C:}~$|\mathcal I_P|\geq 2(n_P-2)/3$ and~$G[\mathcal I_P\cup\{c_P,r_P\}]$ is an outerplane graph.
\end{itemize}
\end{lemma}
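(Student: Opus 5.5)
We will build $\mathcal I_P$ on the same template as \cref{le:biconnectedK} and \cref{le:cushy}, using the structure guaranteed by \cref{le:case13}. Let $c_P,w_1=u_P,w_2,\dots,w_s=v_P$ be the clockwise order of the vertices along the outer face of $B_P$. For each child $c_B$ of $B_P$ we have a unique extremal leaf $B$, which is either a trivial edge $c_Bd_B$ or a pesky block. The cage graphs $G_B$ of these leaves sit between $B_P$ and the outer face of $G$. By the third item of \cref{le:case13}, the neighbours $c^l_B,c^r_B$ of $c_B$ on $B_P$ are adjacent to $\ell_B,r_B$ through faces.

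\textbf{Reduction.} The first step is to \emph{compress} each pendant leaf. Replace $G_B$, minus the cage vertices $\ell_B,r_B$, by a gadget that behaves like a single outer vertex of internal degree two or three seen from $B_P$. Concretely:
\begin{itemize}
\item For a pesky $B$, \cref{obs:pesky} gives $|V(B)|=2|V(\mathcal P_B)|$. Hence taking all of $V(B)-\{c_B\}$ plus $c_B$, while excluding the whole cage path $\mathcal P_B$, keeps exactly a $2/3$ fraction of $G_B$ with $c_B$ counted in. Excluding only $c_B$ and keeping $\ell_B$ or $r_B$ (as in Cases~6 and~11) is also exactly balanced.
\item For a trivial $B$, the cage graph is just $\{c_B,d_B,\ell_B,r_B\}$ with $\ell_B=r_B$ or $\ell_B r_B$ an edge. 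Either $\{c_B,d_B\}$ is kept and the cage vertex is dropped, or $d_B$ and a cage vertex are kept and $c_B$ is dropped.
\end{itemize}
So each child $c_B$ of $B_P$ comes with two locally balanced options: \emph{$c_B$ in} (its cage vertices out) or \emph{$c_B$ out} (one adjacent cage vertex may be kept). Every vertex of $B_P$ on the outer boundary that is not a cutvertex sees only vertices of $L_1$ of $G_P$, exactly as the $w_i$ in \cref{le:cushy}.

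\textbf{Basic rule and the two checks.} Put all vertices of $B_P$ other than $c_P$ in $\mathcal I_P$. Treat the $L_1$ vertices of $G_P$ that are not cage vertices of pendant leaves with the rule of \cref{le:cushy}: drop those of internal degree at least three, and alternate on the internal-degree-two ones. For the pendant leaves, choose the ``$c_B$ out'' option. This keeps $c_B$ out so that $c^l_B,c^r_B$, and every vertex of $B_P$ adjacent to $\ell_B$ or $r_B$, has an excluded neighbour among $\ell_B,c_B,r_B$, keeping $B_P$ exposed.

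Outerplanarity is then checked as before. Every vertex of $B_P$ other than $c_P$ must have an excluded neighbour outside $B_P$; the only delicate ones are $u_P$ and $v_P$, handled via $\ell_P$ and $r_P$ depending on Property~A, B or~C. Each leaf gadget must be outerplane and lie in the outer face of the rest, which follows from \cref{obs:union} and the face structure of \cref{le:case13}.

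\textbf{Counting.} Use the stock-of-charge argument from \cref{le:cushy}. Scan the outer boundary of $G_P$ from $\ell_P$ to $r_P$, maintaining $\gamma_j$, the number of uncharged kept vertices. Each pendant leaf contributes a nonnegative amount to $\gamma$: for pesky leaves this is exactly zero by \cref{obs:pesky}, and a trivial leaf contributes $+1$ because of $d_B$ and the shared face with $c^l_B$. The end of the scan then splits into subcases according to the internal degrees of $\ell_P,r_P$ and the parity of the number of internal-degree-two vertices. These decide whether we get Property~A, or have to exclude a last vertex and only get B or~C.

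\textbf{Main obstacle.} The hardest part will be the interaction at the boundary between a pendant cage and its neighbouring $L_1$ vertices: $\ell_B$ and $r_B$ are also neighbours of $c^l_B$ and $c^r_B$, so their internal degree in $G_P$ mixes with the alternating parity rule. I would first handle this by treating each pendant gadget as a reset point for the parity alternation, restarting the count of internal-degree-two vertices after each leaf. If the resulting local deficit of $-1$ (pesky leaf with both cage vertices dropped) appears, I would switch that leaf to the ``$c_B$ in'' option and verify with \cref{le:case11}-type face arguments that $c^l_B,c^r_B$ remain exposed.
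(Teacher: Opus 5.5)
There is a genuine gap, and it lies in your basic rule for the pendant leaves. As written the rule is inconsistent, because every link-vertex $c_B$ is a vertex of $B_P$. You cannot both put all of $V(B_P)-\{c_P\}$ into $\mathcal I_P$ and choose the ``$c_B$ out'' option.

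More importantly, ``$c_B$ out'' is the wrong default. For a trivial leaf it leaves $d_B$ in and $c_B$ out, which is a net $-1$ in your stock, not the $+1$ you claim. Note also that Case~11, which you cite, actually keeps $c_B$.

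A concrete configuration the lemma must cover is $k=2$, with $B_P$ the triangle $c_Pu_Pv_P$ and a single trivial leaf $u_Pd$, so that $\ell_P,r_P$ are its cage vertices. Here $n_P=6$, and $\mathcal I_P\subseteq\{u_P,v_P,d\}$ because $c_P,\ell_P,r_P$ must be excluded. Property~A would need $|\mathcal I_P|\ge 10/3$, which is impossible. Property~B or~C forces $\mathcal I_P=\{u_P,v_P,d\}$, which contains $c_B=u_P$. Your fallback to ``$c_B$ in'' is only triggered for pesky leaves, so your construction fails on this instance.

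The missing idea is to keep all of $V(K_P)-\{c_P\}$ and decide only on the boundary vertices $z_1,\dots,z_k$. This includes every $c_B$, every $d_B$, and every vertex of every pesky leaf.

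Call $z_j$ \emph{transparent} if it has exactly two neighbours in $K_P$, or exactly three with one of them the degree-one vertex $d_B$ of a trivial leaf; otherwise call it opaque.
\begin{itemize}
\item Opaque vertices are excluded. This includes every cage-path vertex of a pesky leaf, which \cref{obs:pesky} pays for with the $2h$ vertices of $B$, $c_B$ included.
\item Transparent vertices alternate exactly like the internal-degree-two vertices of \cref{le:cushy}. In particular, the cage vertices of trivial leaves join this alternation. Treating each leaf as a ``reset point'' of the parity, as you propose, throws this away.
\end{itemize}

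Which of A, B, C holds is then decided by the transparency pattern at the two ends of $z_1,\dots,z_k$: which of $z_1,z_2,z_{k-1},z_k$ are transparent, and, when both ends are transparent runs, their parities. It is not decided by internal degrees of $\ell_P,r_P$ and a global parity count. The case $k=2$ needs its own argument.

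The counting is a sweep along $B_P$ with an active edge $w_iz_j$ that keeps one or two uncharged ``free'' vertices in reserve. The spare vertex needed to pay for $z_k$ at the end is supplied by a pendant leaf, whose existence \cref{le:case13} guarantees. Your accounting, in which leaves contribute zero or a deficit, cannot produce it.
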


\begin{proof}
Refer to \cref{fig:algorithm-case13-general}. Let~$c_P,w_1=u_P,w_2,\dots,w_{s-1},w_s=v_P$ be the clockwise order of the vertices along the outer face of~$B_P$, and let~$c_P,z_1=\ell_P,z_2,\dots,z_{k-1},z_k=r_P$ be the clockwise order of the vertices along the outer face of~$G_P$. Let~$K_P$ be the subgraph of~$K$ composed of~$B_P$ and of all the blocks that are descendants of~$B_P$; that is,~$K_P$ is obtained from~$G_P$ by removing the vertices~$z_1,z_2,\dots,z_k$ and their incident edges. Note that every vertex among~$z_1,z_2,\dots,z_k$ has at least two neighbors in~$K_P$. Indeed,~$z_1$ is adjacent to~$c_P$ and~$u_P$,~$z_k$ is adjacent to~$c_P$ and~$v_P$, and, for any~$i\in \{2,\dots,k-1\}$, vertex~$z_i$ has at least two neighbors in~$K_P$, as otherwise Case~1 or Case~2 would apply. 

\begin{figure}[tbh]
\centering
\includegraphics[scale=1.2, page=26]{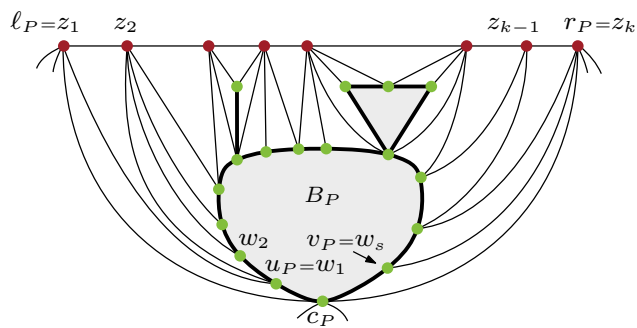}
\caption{Illustration for Case~13. The figure depicts~$G_P$, plus part of edges incident to~$\ell_P$,~$c_P$, and~$r_P$ outside~$G_P$. The boundary of~$K_P$ is represented by thick lines.}
\label{fig:algorithm-case13-general}
\end{figure}

We say that a vertex~$z_j$ is \emph{transparent} if either: (i)~$z_j$ has exactly two neighbors in~$K_P$; or (ii)~$z_j$ has exactly three neighbors in~$K_P$, one of which is a vertex~$d_B$ that has degree~$1$ in~$K_P$. Note that, if (i) holds true, as for~$z_1$ and~$z_{k-1}$ in \cref{fig:algorithm-case13-general}, then the two neighbors of~$z_j$ in~$K_P$ belong to~$B_P$. Indeed, every non-trivial block~$B$ that is a descendant of~$B_P$ in~$T_K$ is pesky, hence if~$z_j$ has a neighbor in~$B$ it actually has~three neighbors in~$B$. Also, if~$z_j$ has a neighbor~$d_B$ that does not belong to~$B_P$ and belongs to a trivial block~$B$, then~$z_j=\ell_B$ or~$z_j=r_B$, hence~$z_j$ is also a neighbor of the parent~$c_B$ of~$B$ in~$T_K$ and of~$c^l_B$ or~$c^r_B$, respectively, by~\cref{le:case13}. If (ii) holds true, as for~$z_3$ in \cref{fig:algorithm-case13-general}, then~$d_B$ is a vertex of a trivial block~$B$ that is child of a child~$c_B$ of~$B_P$, with~$d_B\neq c_B$, and~$z_j$ is neighbor of~$d_B$,~$c_B$, and of either~$c^l_B$ or~$c^r_B$. We say that~$z_j$ is \emph{opaque} if it is not transparent.  


Observe that~$k\geq 2$. Namely, by~\cref{le:case13}, we have that~$B_P$ has at least one child~$c_B$ in~$T_K$, and~$c_B$ has one child~$B$; then, if~$k=1$, the edges~$c_B\ell_B$ and~$c_Br_B$, which exist since~$G$ is internally-triangulated, would connect the same pair of vertices. We start by considering the case~$k=2$. In this case, we can define~$\mathcal I_P=V(K_P)-\{c_P\}$ and observe that~$\{c_P,\ell_P,r_P\} \cap \mathcal I_P=\emptyset$. If~$|V(K_P)|\geq 5$ (see \cref{fig:algorithm-case13-k-equal-2-big}), then~$\mathcal I_P$ satisfies Property~A. Namely,~$G[\mathcal I_P\cup\{c_P\}]$ is an outerplane graph, because it coincides with~$K_P$. Since~$n_P=|V(K_P)|+2\geq 7$, it follows that~$|\mathcal I_P|=|V(K_P)|-1=n_P-3\geq 2(n_P-1)/3$, as required. If~$|V(K_P)|\leq 4$ (see \cref{fig:algorithm-case13-k-equal-2-small}), then, since~$B_P$ is a non-trivial block that is not a leaf in~$T_K$, we have that~$|V(K_P)|=4$, that~$|V(B_P)|=3$, and that~$B_P$ has exactly one descendant block~$B$ in~$T_K$, where~$B$ is a trivial block. Suppose that the cutvertex parent of~$B$ is~$u_P$, as the case in which it is~$v_P$ is symmetric. By~\cref{le:case13}, we have that~$G_P$ contains the edges~$\ell_Pu_P$,~$r_Pu_P$, and~$r_Pv_P$. Then~$\mathcal I_P$ satisfies Property~B (if the cutvertex parent of~$B$ is~$v_P$, then~$\mathcal I_P$ satisfies Property~C). Namely, we have that~$G[\mathcal I_P\cup\{c_P,\ell_P\}]$ is an outerplane graph, since all its vertices are adjacent to~$r_P$, which is in~$L_1$ and not in~$\mathcal I_P\cup\{c_P,\ell_P\}$. Also,~$|\mathcal I_P|=3>2(n_P-2)/3=8/3$, as required. 

\begin{figure}[tb]
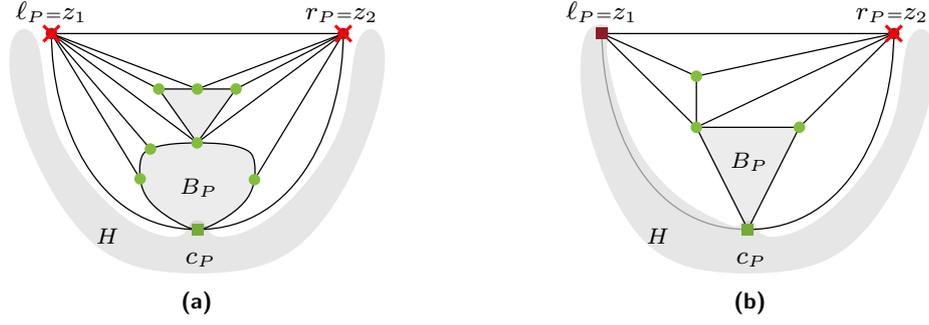

\centering
    \begin{subfigure}{0.48\textwidth}
		\centering
		\includegraphics[scale=1.2, page=2]{figures/case13.pdf}
		\subcaption{}
        \label{fig:algorithm-case13-k-equal-2-big}
	\end{subfigure}
    \hfill
    \begin{subfigure}{0.48\textwidth}
		\centering
		\includegraphics[scale=1.2, page=3]{case13.pdf}
		\subcaption{}
        \label{fig:algorithm-case13-k-equal-2-small}
	\end{subfigure}
\caption{Illustrations for Case~13 when~$k=2$ (a) if~$|V(K_P)| \geq 5$ and (b) if~$|V(K_P)| \leq 4$.}
\label{fig:algorithm-case13-k-equal-2}
\end{figure}

We can now assume that~$k\geq 3$. In this case, the construction of~$\mathcal I_P$ is similar to Cases 4 and 5.2. First, all the vertices of~$B_P$, except for~$c_P$, are in~$\mathcal I_P$. Second,~$\ell_P$ and~$r_P$ are not in~$\mathcal I_P$, hence~$\{c_P,\ell_P,r_P\} \cap \mathcal I_P=\emptyset$. Third, each opaque vertex~$z_j$ is not in~$\mathcal I_P$. Finally, whether a transparent vertex is in~$\mathcal I_P$ or not is decided according to five different cases.


In {\bf Case~13.1},~$z_1$ is transparent and~$z_2$ is opaque. Then, for~$j=3,\dots,k-1$, if~$z_j$ is a transparent vertex, it is in~$\mathcal I_P$ if~$z_{j-1}\notin \mathcal I_P$, and it is not in~$\mathcal I_P$ if~$z_{j-1}\in \mathcal I_P$. 

We prove that~$\mathcal I_P$ satisfies Property~B. In order to prove that~$G[\mathcal I_P\cup\{c_P,\ell_P\}]$ is an outerplane graph, it suffices to show that, for each vertex in~$V(K_P)-\{c_P\}$, there exists a neighbor among~$z_1,\dots,z_k$ which is not in~$\mathcal I_P\cup\{c_P,\ell_P\}$. This is done similarly to the proofs of~\cref{le:biconnectedK,le:cushy}. Namely, consider a vertex~$w$ in~$V(K_P)-\{c_P\}$. If~$w$ is a neighbor of an opaque vertex~$z_j$, then~$z_j$ is not in~$\mathcal I_P\cup\{c_P,\ell_P\}$, hence it is the desired neighbor of~$w$; this includes all vertices~$w$ that belong to non-trivial~(pesky) blocks which are descendants of~$B_P$ in~$T_K$. Each remaining vertex~$w$ is only neighbor of transparent vertices. Note that, for any two transparent vertices~$z_j$ and~$z_{j+1}$ that are consecutive in the sequence~$z_1,\dots,z_k$, we have that at most one of them belongs to~$\mathcal I_P\cup\{c_P,\ell_P\}$. This comes from the algorithm's construction if~$3\leq j\leq k-2$, from the fact that~$r_P$ is not in~$\mathcal I_P\cup\{c_P,\ell_P\}$ if~$j=k-1$ and from the assumption that~$z_2$ is opaque if~$j=1$ or~$j=2$. If~$w$ belongs to a trivial block~$B$, then its neighbors~$\ell_B$ and~$r_B$ are consecutive in the sequence~$z_1,\dots,z_k$, hence one of them does not belong to~$\mathcal I_P\cup\{c_P,\ell_P\}$. Also, if~$w$ is a vertex~$w_i$ of~$B_P$ and does not belong to any block that is a descendant of~$B_P$ in~$T_K$, then again its neighbors are consecutive in the sequence~$z_1,\dots,z_k$, hence one of them does not belong to~$\mathcal I_P\cup\{c_P,\ell_P\}$.

We prove that~$|\mathcal I_P|\geq 2(n_P-2)/3$. We charge each vertex of~$G_P$ not in~$\mathcal I_P$ and different from~$c_P$ and~$\ell_P$ to two vertices in~$\mathcal I_P$, so that each vertex in~$\mathcal I_P$ is charged with at most one vertex. The charging scheme is defined iteratively on the vertices~$z_2,\dots,z_k$ that do not belong to~$\mathcal I_P$. Throughout the algorithm that defines the charging scheme, we maintain an \emph{active edge}~$w_iz_j$, with the following meaning. Let~$\mathcal C^{i,j}$ be the cycle~$c_P w_1 w_2 \dots w_i z_j z_{j-1} \dots z_1$ and let~$\mathcal I_P^{i,j}$ be the subset of~$\mathcal I_P$ composed of those vertices that are inside or on the boundary of~$\mathcal C^{i,j}$. Then the following properties are satisfied:
\begin{enumerate}[(X1)]
    \item All the vertices among~$z_2,\dots,z_j$ that are not in~$\mathcal I_P^{i,j}$ have been charged to two vertices in~$\mathcal I_P^{i,j}$, so that each vertex in~$\mathcal I_P^{i,j}$ has been charged with at most one vertex.
    \item If~$j<k$, then the set~$\mathcal I_P^{i,j}$ contains a \emph{first free vertex}~$\phi_1$, that is, a vertex that has not been charged with any vertex among~$z_2,\dots,z_j$.
    \item If~$j<k$ and if all the vertices of a block~$B$ that is a descendant of~$B_P$ are inside or on the boundary of~$\mathcal C^{i,j}$, then~$\mathcal I_P^{i,j}$ contains a \emph{second free vertex}~$\phi_2$, that is, a vertex that has not been charged with any vertex among~$z_2,\dots,z_j$ and is different from~$\phi_1$. 
    \item We have that~$j=1$ or that~$z_j$ is not in~$\mathcal I_P^{i,j}$.
\end{enumerate}

Observe that, once~$j=k$, Property~(X1) implies that~$|\mathcal I_P|\geq 2(n_P-2)/3$. 

The first active edge is~$w_1z_1$. Recall that the edge~$w_1z_1=u_P\ell_P$ exists and that the cycle~$c_Pw_1z_1$ bounds an internal face of~$G$. Hence, Property~(X3) is vacuously satisfied, since no block~$B$ that is a descendant of~$B_P$ has all its vertices inside or on the boundary of~$\mathcal C^{1,1}$. Properties~(X1) and~(X4) are also trivially satisfied, since~$j=1$. Note that~$z_1=\ell_P$ does not need to be charged to any vertex. Finally, Property~(X2) is satisfied by setting~$\phi_1=w_1$; observe that~$\mathcal I_P^{1,1}=\{w_1\}$.  

Suppose now that our charging scheme has defined an active edge~$w_iz_j$, for some~$1\leq i\leq s$ and~$1\leq j\leq k-1$, so that Properties~(X1)--(X4) are satisfied. Let~$w_iz_jv^*$ be the cycle delimiting an internal face of~$G$ that does not lie inside~$\mathcal C^{i,j}$. We cannot have~$v^*=c_P$, since this would imply that~$i=s$ and~$j=k$, given that the edge~$c_Pz_k$ follows~$c_Pw_s$ in clockwise order around~$c_P$ in~$G$, by construction. We distinguish some cases.

\begin{figure}[tb]
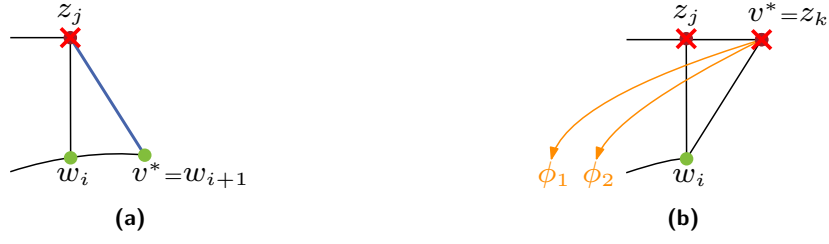

\centering
    \begin{subfigure}{0.48\textwidth}
		\centering
		\includegraphics[scale=1.4, page=4]{case13.pdf}
		\subcaption{}
        \label{fig:algorithm-case13.1-case1}
	\end{subfigure}
    \hfill
    \begin{subfigure}{0.48\textwidth}
		\centering
		\includegraphics[scale=1.4, page=5]{case13.pdf}
		\subcaption{}
        \label{fig:algorithm-case13.1-case2}
	\end{subfigure}
\caption{Illustrations for Case~13.1 (a) when~$v^*=w_{i+1}$ and (b) when~$v^*=z_k$. In this and the following figures, the thick blue edge denotes the new active edge.}
\label{fig:algorithm-case13.1-part-I}
\end{figure}

\begin{figure}[tb]
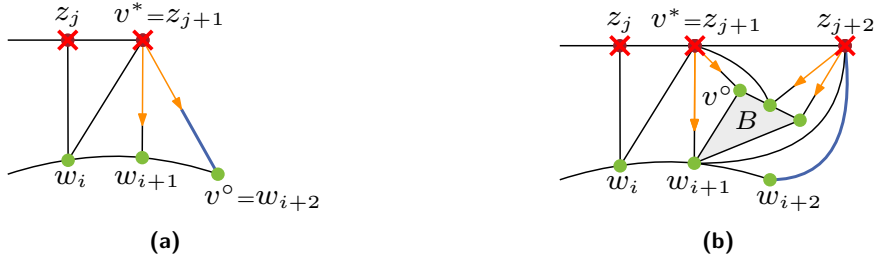

\centering
    \begin{subfigure}{0.48\textwidth}
		\centering
		\includegraphics[scale=1.4, page=7]{case13.pdf}
		\subcaption{}
        \label{fig:algorithm-case13.1-case3.1}
	\end{subfigure}
    \hfill
    \begin{subfigure}{0.48\textwidth}
		\centering
		\includegraphics[scale=1.4, page=8]{case13.pdf}
		\subcaption{}
        \label{fig:algorithm-case13.1-case3.2}
	\end{subfigure}
\caption{Illustrations for Case~13.1 when~$v^*=z_{j+1}$, with~$j+1<k$, and~$z_{j+1}\notin \mathcal I_P$ (a) if~$v^{\circ}=w_{i+2}$ and (b) if~$v^{\circ}$ belongs to a non-trivial pesky block~$B$ that is a descendant of~$B_P$ and whose parent is~$w_{i+1}$.}
\label{fig:algorithm-case13.1-part-II}
\end{figure}

\begin{enumerate}[(I)]
    \item First, if~$v^*=w_{i+1}$ (see \cref{fig:algorithm-case13.1-case1}), we proceed with~$w_{i+1}z_j$ as an active edge. Properties~(X1)--(X4) are satisfied by~$w_{i+1}z_j$ since they are satisfied by~$w_iz_j$.
    \item Second, suppose that~$v^*=z_k$ (see \cref{fig:algorithm-case13.1-case2}). Then we conclude the algorithm by charging~$z_k$ to~$\phi_1$ and~$\phi_2$. Observe that~$\mathcal C^{i,j}$ contains inside or on its boundary all the vertices of a block~$B$ that is a descendant of~$B_P$, hence~$\mathcal I_P^{i,j}$ contains a second free vertex~$\phi_2$. Indeed, by~\cref{le:case13}, we have that a block~$B$ that is a descendant of~$B_P$ exists. Furthermore,~$B$ cannot be contained inside the cycle~$w_i w_{i+1} \dots w_s z_k$, as otherwise we would have~$\ell_B=r_B=z_k$.    
    \item Third, suppose that~$v^*=z_{j+1}$, with~$j+1<k$, and that~$z_{j+1}\notin \mathcal I_P$ (see \cref{fig:algorithm-case13.1-part-II}). Then~$z_{j+1}$ is opaque. Indeed, we have that~$j=1$ or that~$z_j$ is not in~$\mathcal I_P^{i,j}$. In the former case,~$z_{j+1}$ is opaque by assumption. In the latter case, if~$z_{j+1}$ were transparent, then it would belong to~$\mathcal I_P$, by the algorithm's construction, whereas it does not. Note that there is no block~$B$ that is descendant of~$B_P$, whose parent is~$w_i$, and such that~$\ell_B=z_{j+1}$; indeed, if such a block existed, then the edge~$\ell_Bc_B^l$ would not belong to~$G$, contradicting~\cref{le:case13}. It follows that~$i<s$, that the edge~$w_{i+1}z_{j+1}$ exists, and that the cycle~$w_iw_{i+1}z_{j+1}$ delimits an internal face of~$G$. Let~$v^{\circ}$ be the vertex which is different from~$w_i$ and such that the cycle~$w_{i+1}z_{j+1}v^{\circ}$ delimits an internal face of~$G$. We have~$v^{\circ}\neq z_{j+2}$, as otherwise~$z_{j+1}$ would be transparent, while it is opaque. Also,~$v^{\circ}$ does not belong to a trivial block~$B$ that is descendant of~$B_P$, whose parent is~$w_{i+1}$, and such that~$\ell_B=z_{j+1}$, as otherwise~$z_{j+1}$ would be transparent, while it is opaque. Also,~$v^{\circ}\neq c_P$, since~$v^{\circ}=c_P$ would imply~$j+1=k$, given that the edge~$c_Pz_k$ follows~$c_Pw_s$ in clockwise order around~$c_P$ in~$G$, by construction. Hence, two cases are possible.
    \begin{itemize}
        \item If~$v^{\circ}=w_{i+2}$ (see \cref{fig:algorithm-case13.1-case3.1}), then we charge~$z_{j+1}$ to~$w_{i+1}$ and~$w_{i+2}$ and we proceed with~$w_{i+2}z_{j+1}$ as an active edge. Property~(X1) is satisfied since it is satisfied by~$w_iz_j$ and since~$z_{j+1}$ has been charged to~$w_{i+1}$ and~$w_{i+2}$, which belong to~$\mathcal I_P^{i+2,j+1}$ and do not belong to~$\mathcal I_P^{i,j}$. Properties~(X2) and~(X3) are satisfied since they are satisfied by~$w_iz_j$ and since~$\mathcal C^{i+2,j+1}$ contains the same blocks inside or on its boundary as~$\mathcal C^{i,j}$. Finally, Property~(X4) is satisfied since~$z_{j+1}$ is not in~$\mathcal I_P^{i+2,j+1}$.  
        \item Otherwise,~$v^{\circ}$ belongs to a non-trivial pesky block~$B$ that is a descendant of~$B_P$, whose parent is~$w_{i+1}$, and such that~$\ell_B=z_{j+1}$ (see \cref{fig:algorithm-case13.1-case3.2}). Let~$\ell_B=z_{j+1},z_{j+2},\dots,z_{j+h}=r_B$ be the cage path of~$B$. By \cref{obs:pesky}, we can charge~$z_{j+1},z_{j+2},\dots,z_{j+h}$ to the~$2h$ vertices of~$B$ (including~$w_{i+1}$). By~\cref{le:case13}, the edge~$w_{i+2}z_{j+h}$ exists and the cycle~$w_{i+1}w_{i+2}z_{j+h}$ delimits an internal face of~$G$. If~$j+h=k$, we are done. Otherwise, we proceed with~$w_{i+2}z_{j+h}$ as an active edge. Property~(X1) is satisfied since it is satisfied by~$w_iz_j$ and since each of~$z_{j+1},z_{j+2},\dots,z_{j+h}$ has been charged to two distinct vertices in~$B$, which belong to~$\mathcal I_P^{i+2,j+h}$ and do not belong to~$\mathcal I_P^{i,j}$. Property~(X2) is satisfied since it is satisfied by~$w_iz_j$. Property (X3) is satisfied by setting~$\phi_2=w_{i+2}$. Finally, Property~(X4) is satisfied since~$z_{j+h}$ is not in~$\mathcal I_P^{i+2,j+h}$.    
    \end{itemize}
    
\begin{figure}[tb]
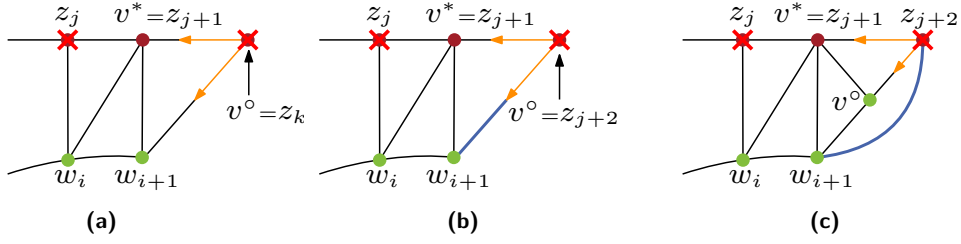

\centering
\begin{subfigure}{0.32\textwidth}
    \centering
    \includegraphics[scale=1.4, page=9]{case13.pdf}
    \subcaption{}
    \label{fig:algorithm-case13.1-case4.1}
\end{subfigure}
\hfill
\begin{subfigure}{0.32\textwidth}
    \centering
    \includegraphics[scale=1.4, page=10]{case13.pdf}
    \subcaption{}
    \label{fig:algorithm-case13.1-case4.2}
\end{subfigure}
\hfill
\begin{subfigure}{0.32\textwidth}
    \centering
    \includegraphics[scale=1.4, page=11]{case13.pdf}
    \subcaption{}
    \label{fig:algorithm-case13.1-case4.3}
\end{subfigure}
\caption{Illustrations for Case~13.1 when~$v^*=z_{j+1}$, with~$j+1<k$, and~$z_{j+1}\in \mathcal I_P$ (a) if~$v^{\circ}=z_{j+2}$ and~$j+2=k$, (b) if~$v^{\circ}=z_{j+2}$ and~$j+2<k$, and (c) if~$v^{\circ}$ belongs to a trivial block~$B$ that is a descendant of~$B_P$ and whose parent is~$w_{i+1}$.}
\label{fig:algorithm-case13.1-part-III}
\end{figure}

\item Fourth, suppose that~$v^*=z_{j+1}$, with~$j+1<k$, and that~$z_{j+1}\in \mathcal I_P$ (see \cref{fig:algorithm-case13.1-part-III}). Then~$z_{j+1}$ is transparent and the cycle~$w_iw_{i+1}z_{j+1}$ delimits an internal face of~$G$. Let~$v^{\circ}$ be the vertex which is different from~$w_i$ and such that the cycle~$w_{i+1}z_{j+1}v^{\circ}$ delimits an internal face of~$G$. Since~$z_{j+1}$ is transparent, we have that~$z_{j+2}\notin \mathcal I_P$ and three cases are possible.  
\begin{itemize}
    \item If~$v^{\circ}=z_{j+2}$ and~$j+2=k$ (see \cref{fig:algorithm-case13.1-case4.1}), we conclude the algorithm by charging~$z_k$ to~$w_{i+1}$ and~$z_{j+1}$ (note that~$\phi_1$ and~$\phi_2$ remain uncharged). 
    \item If~$v^{\circ}=z_{j+2}$ and~$j+2<k$ (see \cref{fig:algorithm-case13.1-case4.2}), then we charge~$z_{j+2}$ to~$w_{i+1}$ and~$z_{j+1}$, and we proceed with~$w_{i+1}z_{j+2}$ as an active edge. Property~(X1) is satisfied since it is satisfied by~$w_iz_j$ and since~$z_{j+2}$ has been charged to~$w_{i+1}$ and~$z_{j+1}$, which belong to~$\mathcal I_P^{i+1,j+2}$ and not~$\mathcal I_P^{i,j}$. Properties~(X2) and~(X3) are satisfied since they are satisfied by~$w_iz_j$ and since~$\mathcal C^{i+1,j+2}$ contains the same blocks inside or on its boundary as~$\mathcal C^{i,j}$. Finally, Property~(X4) is satisfied since~$z_{j+2}$ is not in~$\mathcal I_P^{i+1,j+2}$.  
    \item Otherwise,~$v^{\circ}$ belongs to a trivial block~$B$ that is a descendant of~$B_P$, whose parent is~$w_{i+1}$, and such that~$\ell_B=z_{j+1}$ and~$r_B=z_{j+2}$ (see \cref{fig:algorithm-case13.1-case4.3}).  We charge~$z_{j+2}$ to~$v^{\circ}$ and~$z_{j+1}$ and we proceed with~$w_{i+1}z_{j+2}$ as an active edge. Property~(X1) is satisfied since it is satisfied by~$w_iz_j$ and since~$z_{j+2}$ has been charged to~$v^{\circ}$ and~$z_{j+1}$, which belong to~$\mathcal I_P^{i+1,j+2}$ and not to~$\mathcal I_P^{i,j}$. Property~(X2) is satisfied since it is satisfied by~$w_iz_j$. Property (X3) is satisfied by setting~$\phi_2=w_{i+1}$. Finally, Property~(X4) is satisfied since~$z_{j+2}$ is not in~$\mathcal I_P^{i+1,j+2}$.   
\end{itemize}

\item Fifth, suppose that~$v^*$ belongs to a non-trivial pesky block~$B$ that is a descendant of~$B_P$, whose parent is~$w_i$, and such that~$\ell_B=z_j$ (see \cref{fig:algorithm-case13.1-part-IV}). Let~$\ell_B=z_j,z_{j+1},\dots,z_{j+h}=r_B$ be the cage path of~$B$. By \cref{obs:pesky}, we can charge~$z_{j+1},z_{j+2},\dots,z_{j+h}$ to~$2h$ vertices among the~$2h+1$ vertices of~$B$ different from~$w_i$. By~\cref{le:case13}, the edge~$w_{i+1}z_{j+h}$ exists and the cycle~$w_{i}w_{i+1}z_{j+h}$ delimits an internal face of~$G$. If~$j+h=k$, we are done; note that this is always the case if~$w_{i+1}$ is actually~$c_P$, given that the edge~$c_Pz_k$ follows~$c_Pw_s$ in clockwise order around~$c_P$ in~$G$, by construction. Otherwise, we proceed with~$w_{i+1}z_{j+h}$ as an active edge. Property~(X1) is satisfied since it is satisfied by~$w_iz_j$ and since each of~$z_{j+1},z_{j+2},\dots,z_{j+h}$ has been charged to two distinct vertices in~$B$ different from~$w_i$; these vertices belong to~$\mathcal I_P^{i+1,j+h}$ and do not belong to~$\mathcal I_P^{i,j}$. Property~(X2) is satisfied since it is satisfied by~$w_iz_j$. Also, Property (X3) is satisfied by setting~$\phi_2=w_{i+1}$; note that there is even one more vertex of~$B$ which has not been charged with any vertex. Finally, Property~(X4) is satisfied since~$z_{j+h}$ is not in~$\mathcal I_P^{i+1,j+h}$. 

\begin{figure}[tb]
\centering
\includegraphics[scale=1.4, page=12]{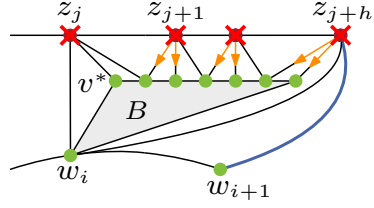}
\caption{Illustration for Case~13.1 when~$v^*$ belongs to a non-trivial pesky block~$B$ that is a descendant of~$B_P$ and whose parent is~$w_i$.}
\label{fig:algorithm-case13.1-part-IV}
\end{figure}
\begin{figure}[b]
\centering
\includegraphics[scale=1.4, page=13]{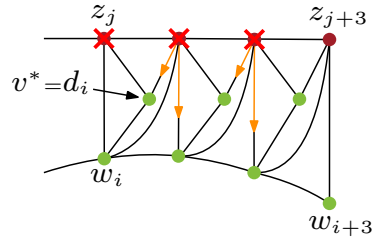}
\caption{Illustration for Case~13.1 when~$v^*$ belongs to a trivial block~$B$ that is a descendant of~$B_P$, whose parent is~$w_i$, and assuming~$h=2$. The figure only shows the charging of~$z_{j+1}$ and~$z_{j+2}$.}
\label{fig:algorithm-case13.1-part-V}
\end{figure}

\item Sixth, suppose that~$v^*$ belongs to a trivial block~$B$ that is a descendant of~$B_P$, whose parent is~$w_i$, and such that~$\ell_B=z_j$. Let~$h$ be the maximal index such that, for~$l=0,1\dots,h$, there exists a trivial block~$B_{i+l}$ that is a descendant of~$B_P$, whose parent is~$w_{i+l}$, and such that~$\ell_{B_{i+l}}=z_{j+l}$. Note that~$h\geq 0$, since~$B_i=B$ satisfies the required properties. Also, for~$l=0,1\dots,h-1$, we have~$r_{B_{i+l}}=z_{j+l+1}=\ell_{B_{i+l+1}}$ and such a vertex is opaque. Moreover, we have that~$r_{B_{i+h}}=z_{j+h+1}$; such a vertex might be opaque or transparent (see \cref{fig:algorithm-case13.1-part-V} for an example with~$h=2$). For~$l=0,\dots,h$, let~$d_{i+l}$ be the vertex of~$B_{i+l}$ different from~$w_{i+l}$. For~$l=1,\dots,h$, we charge~$z_{j+l}$ to~$w_{i+l}$ and to~$d_{i+l-1}$. If~$j+h+1=k$ (see \cref{fig:algorithm-case13.1-case6.1}), then we conclude the algorithm by charging~$z_k$ to~$\phi_1$ and to~$d_{i+h}$. Thus we can assume that~$j+h+1<k$. This implies that~$i+h<s$, given that the edge~$c_Pz_k$ follows~$c_Pw_s$ in clockwise order around~$c_P$ in~$G$, by construction.  By \cref{le:case13}, the edge~$w_{i+h+1}z_{j+h+1}$ exists and the cycle~$w_{i+h}w_{i+h+1}z_{j+h+1}$ delimits an internal face of~$G$.  Note that~$z_{j+h+1}$ still needs to be charged, if it is opaque, and that~$d_{i+h}$ and~$w_{i+h+1}$ have not yet been charged with any vertex. We cannot simply proceed with~$w_{i+h+1}z_{j+h+1}$ as the active edge, even if~$z_{j+h+1}$ is opaque, as Property (X3) would not be satisfied. Indeed, we encountered a block~$B$ descendant of~$B_P$ but we could not set~$\phi_2$ to any uncharged vertex. For this reason, we need to perform a case distinction very similar to the one we just presented, but with one more guarantee: Let~$w_{i+h+1}z_{j+h+1}u^*$ be the cycle delimiting an internal face of~$G$ with~$u^*\neq w_{i+h}$; then, by the maximality of~$h$, we have that~$u^*$ does not belong to a trivial block that is a descendant of~$B_P$, whose parent is~$w_{i+h+1}$, and such that~$\ell_B=z_{j+h+1}$. Note that~$u^*\neq c_P$, as~$u^*=c_P$ would imply that~$j+h+1=k$, given that the edge~$c_Pz_k$ follows~$c_Pw_s$ in clockwise order around~$c_P$ in~$G$, by construction. Thus, the following cases arise.

\begin{figure}[tb]
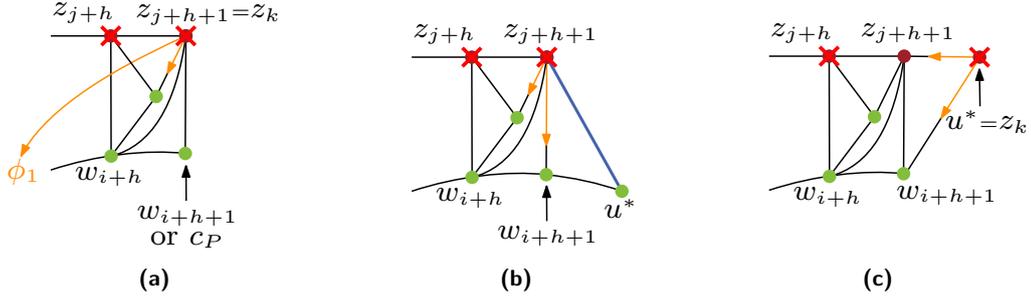

\centering
\begin{subfigure}{0.32\textwidth}
    \centering
    \includegraphics[scale=1.4, page=14]{case13New.pdf}
    \subcaption{} 
    \label{fig:algorithm-case13.1-case6.1}
\end{subfigure}
\hfill
\begin{subfigure}{0.32\textwidth}
    \centering
    \includegraphics[scale=1.4, page=15]{case13.pdf}
    \subcaption{}
    \label{fig:algorithm-case13.1-case6.2}
\end{subfigure}
\hfill
\begin{subfigure}{0.32\textwidth}
    \centering
    \includegraphics[scale=1.4, page=16]{case13New.pdf}
    \subcaption{}
    \label{fig:algorithm-case13.1-case6.3}
\end{subfigure}
\caption{Illustrations for Case~13.1 when~$v^*$ belongs to a trivial block~$B$ that is a descendant of~$B_P$ and whose parent is~$w_i$ (a) if~$j+h+1=k$, (b) if~$j+h+1<k$ and~$u^*=w_{i+h+2}$, and (c) if~$u^*=z_{j+h+2}=z_k$.}
\label{fig:algorithm-case13.1-part-VI}
\end{figure}

\begin{enumerate}[1.]
\item First, if~$u^*=w_{i+h+2}$ (see \cref{fig:algorithm-case13.1-case6.2}), then~$z_{j+h+1}$ is opaque. We charge~$z_{j+h+1}$ to~$w_{i+h+1}$ and to~$d_{i+h}$, and we proceed with~$w_{i+h+2}z_{j+h+1}$ as an active edge. Property~(X1) is satisfied since it is satisfied by~$w_iz_j$ and since each of~$z_{j+1},z_{j+2},\dots,z_{j+h+1}$ has been charged to two distinct vertices that belong to~$\mathcal I_P^{i+h+2,j+h+1}$ and do not belong to~$\mathcal I_P^{i,j}$.  Property~(X2) is satisfied since it is satisfied by~$w_iz_j$. Also, Property (X3) is satisfied by setting~$\phi_2=w_{i+h+2}$. Finally, Property~(X4) is satisfied since~$z_{j+h+1}$ is opaque and hence it is not in~$\mathcal I_P^{i+h+2,j+h+1}$. 
\item Second, if~$u^*=z_{j+h+2}=z_k$ (see \cref{fig:algorithm-case13.1-case6.3}), then~$z_{j+h+1}$ is transparent and belongs to~$\mathcal I_P$. We conclude the algorithm by charging~$z_k$ to~$w_{i+h+1}$ and~$z_{j+h+1}$ (note that~$\phi_1$ and~$d_{i+h}$ remain uncharged).   
\item Third, suppose that~$u^*=z_{j+h+2}$, with~$j+h+2<k$ (see \cref{fig:algorithm-case13.1-case6.4}). Then~$z_{j+h+1}$ is transparent and belongs to~$\mathcal I_P$, while~$z_{j+h+2}$ does not belong to~$\mathcal I_P$. We charge~$z_{j+h+2}$ to~$d_{i+h}$ and to~$z_{j+h+1}$, and we proceed with~$w_{i+h+1}z_{j+h+2}$ as an active edge. Property~(X1) is satisfied since it is satisfied by~$w_iz_j$ and since each of~$z_{j+1},z_{j+2},\dots,z_{j+h},z_{j+h+2}$ has been charged to two distinct vertices that belong to~$\mathcal I_P^{i+h+1,j+h+2}$ and do not belong to~$\mathcal I_P^{i,j}$. Property~(X2) is satisfied since it is satisfied by~$w_iz_j$. Also, Property (X3) is satisfied by setting~$\phi_2=w_{i+h+1}$. Finally, Property~(X4) is satisfied since~$z_{j+h+2}$ is not in~$\mathcal I_P^{i+h+1,j+h+2}$. 
\item Finally, suppose that~$u^*$ belongs to a non-trivial pesky block~$B'$ that is a descendant of~$B_P$, whose parent is~$w_{i+h+1}$, and such that~$\ell_{B'}=z_{j+h+1}$ (see \cref{fig:algorithm-case13.1-case6.5}). Let~$\ell_{B'}=z_{j+h+1},z_{j+h+2},\dots,z_{j+h+h'}=r_{B'}$ be the cage path of~$B'$. By \cref{obs:pesky}, we can charge~$z_{j+h+1},z_{j+h+2},\dots,z_{j+h+h'}$ to the~$2h'$ vertices of~$B'$, including~$w_{i+h+1}$. If~$j+h+h'=k$, we are done. Otherwise, we proceed with~$w_{i+h+1}z_{j+h+h'}$ as an active edge. Property~(X1) is satisfied since it is satisfied by~$w_iz_j$ and since each of~$z_{j+1},z_{j+2},\dots,z_{j+h+h'}$ has been charged to two distinct vertices that belong to~$\mathcal I_P^{i+h+1,j+h+h'}$ and do not belong to~$\mathcal I_P^{i,j}$. Property~(X2) is satisfied since it is satisfied by~$w_iz_j$. Also, Property (X3) is satisfied by setting~$\phi_2=d_{i+h}$. Finally, Property~(X4) is satisfied since~$z_{j+h+h'}$ is not in~$\mathcal I_P^{i+h+1,j+h+h'}$. 
\end{enumerate}
\end{enumerate}

This concludes the discussion of Case~13.1.

\begin{figure}[tb]
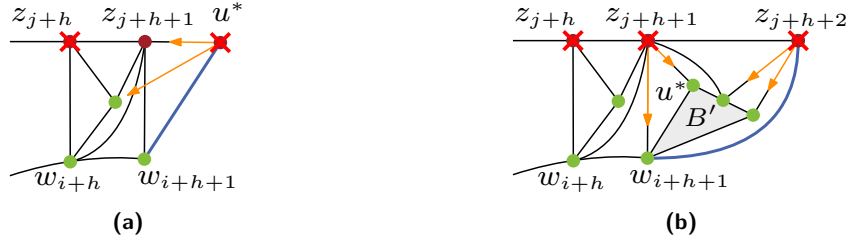

\centering
    \begin{subfigure}{0.48\textwidth}
		\centering
		\includegraphics[scale=1.4, page=17]{case13.pdf}
		\subcaption{}
        \label{fig:algorithm-case13.1-case6.4}
	\end{subfigure}
    \hfill
    \begin{subfigure}{0.48\textwidth}
		\centering
		\includegraphics[scale=1.4, page=18]{case13.pdf}
		\subcaption{}
        \label{fig:algorithm-case13.1-case6.5}
	\end{subfigure}
\caption{Illustrations for Case~13.1 when~$v^*$ belongs to a trivial block~$B$ that is a descendant of~$B_P$ and whose parent is~$w_i$ (a) if~$u^*=z_{j+h+2}$, with~$j+h+2<k$ and (b) if~$u^*$ belongs to a non-trivial pesky block~$B'$ that is a descendant of~$B_P$ and whose parent is~$w_{i+h+1}$.}
\label{fig:algorithm-case13.1-part-VII}
\end{figure}


In {\bf Case~13.2},~$z_k$ is transparent and~$z_{k-1}$ is opaque. Then, for~$j=k-2,k-1,\dots,2$, if~$z_j$ is a transparent vertex, it is in~$\mathcal I_P$ if~$z_{j+1}\notin \mathcal I_P$, and it is not in~$\mathcal I_P$ if~$z_{j+1}\in \mathcal I_P$. We have that~$\mathcal I_P$ satisfies Property~C. The proof is symmetric to the one of Case~13.1.


In {\bf Case~13.3}, we have that Cases 13.1 and 13.2 do not apply, and that~$z_k$ is opaque. Then, for~$j=2,\dots,k-1$, if~$z_j$ is a transparent vertex, it is in~$\mathcal I_P$ if~$z_{j-1}\notin \mathcal I_P$, and it is not in~$\mathcal I_P$ if~$z_{j-1}\in \mathcal I_P$. 

We prove that~$\mathcal I_P$ satisfies Property~A. The proof that~$G[\mathcal I_P\cup\{c_P\}]$ is an outerplane graph is very similar to the one of Case~13.1, with the only difference in the argument which proves that, for any two transparent vertices~$z_j$ and~$z_{j+1}$, at most one of them belongs to~$\mathcal I_P\cup\{c_P\}$. This comes from the algorithm's construction if~$2\leq j\leq k-2$ and from the fact that~$\ell_P$ and~$r_P$ are not in~$\mathcal I_P\cup\{c_P\}$ if~$j=1$ or~$j=k-1$, respectively.

We prove that~$|\mathcal I_P|\geq 2(n_P-1)/3$. As in Case~13.1, we iteratively charge the vertices~$z_1,\dots,z_k$ that do not belong to~$\mathcal I_P$ to two vertices in~$\mathcal I_P$, so that each vertex in~$\mathcal I_P$ is charged with at most one vertex. We again maintain an active edge~$w_iz_j$, for which~$\mathcal C^{i,j}$ and~$\mathcal I_P^{i,j}$ are defined as in Case~13.1, so that the following properties are satisfied:
\begin{enumerate}[(Y1)]
    \item All the vertices among~$z_1,\dots,z_j$ that are not in~$\mathcal I_P^{i,j}$ have been charged to two vertices in~$\mathcal I_P^{i,j}$, so that each vertex in~$\mathcal I_P^{i,j}$ has been charged with at most one vertex.
    \item If~$j<k$ and if all the vertices of a block~$B$ that is a descendant of~$B_P$ are inside or on the boundary of~$\mathcal C^{i,j}$, then~$\mathcal I_P^{i,j}$ contains a \emph{first free vertex}~$\phi_1$, that is, a vertex that has not been charged with any vertex among~$z_1,\dots,z_j$. 
    \item We have that~$z_j$ is not in~$\mathcal I_P^{i,j}$.
\end{enumerate}

Observe that, once~$j=k$, Property~(Y1) implies that~$|\mathcal I_P|\geq 2(n_P-1)/3$. 

While the iteration in the charging scheme is extremely similar to the one in Case~13.1, the initialization is much more complicated and distinguishes several cases. Let~$w_1z_1v^*_1$ be the cycle delimiting an internal face of~$G$ with~$v^*\neq c_P$. 

\begin{itemize}
    \item First, suppose that~$v^*_1=w_{2}$ (see \cref{fig:algorithm-case13.3-init-1}). Then~$z_1$ is opaque. We charge~$z_1$ to~$w_1$ and~$w_2$ and proceed with~$w_2z_1$ as an active edge. Property (Y1) is satisfied since~$z_1$ has been charged to two vertices in~$\mathcal I_P^{2,1}$ and each of them has been charged with~$z_1$ only. Also, Property (Y2) is vacuously satisfied since~$\mathcal C^{2,1}$ does not contain inside or on its boundary any block~$B$ that is a descendant of~$B_P$. Finally, Property (Y3) is satisfied since~$z_1$ is not in~$\mathcal I_P^{2,1}$, by construction. 
    \item Second, suppose that~$v^*_1$ belongs to a non-trivial pesky block~$B$ that is a descendant of~$B_P$, whose parent is~$w_1$, and such that~$\ell_B=z_1$ (see \cref{fig:algorithm-case13.3-init-2}). Let~$\ell_B=z_1,z_2,\dots,z_h=r_B$ be the cage path of~$B$. Note that all of~$z_1,z_2,\dots,z_h$ are opaque. By \cref{obs:pesky}, we can charge~$z_1,z_2,\dots,z_h$ to the~$2h$ vertices of~$B$, including~$w_1$. By~\cref{le:case13}, the edge~$w_2z_h$ exists and the cycle~$w_1w_2z_h$ delimits an internal face of~$G$. If~$h=k$, we are done. Otherwise, we proceed with~$w_2z_h$ as an active edge. Property~(Y1) is satisfied since each of~$z_1,z_2,\dots,z_h$ has been charged to two distinct vertices which belong to~$\mathcal I_P^{2,h}$. Property~(Y2) is satisfied by setting~$\phi_1=w_2$. Also, Property (Y3) is satisfied since~$z_h$ is not in~$\mathcal I_P^{2,h}$.

\begin{figure}[tb]
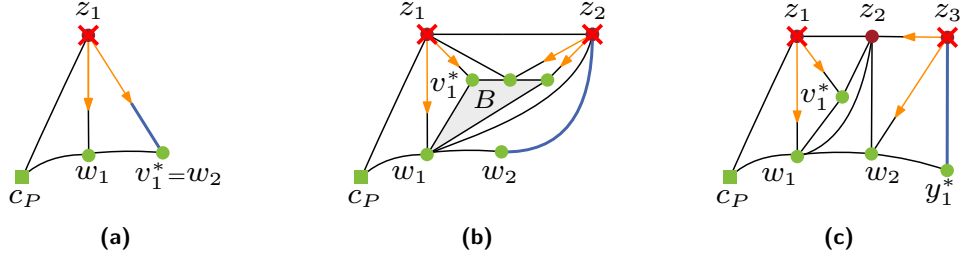

\centering
\begin{subfigure}{0.32\textwidth}
    \centering
    \includegraphics[scale=1.4, page=20]{case13.pdf}
    \subcaption{}
    \label{fig:algorithm-case13.3-init-1}
\end{subfigure}
\hfill
\begin{subfigure}{0.32\textwidth}
    \centering
    \includegraphics[scale=1.4, page=21]{case13.pdf}
    \subcaption{}
    \label{fig:algorithm-case13.3-init-2}
\end{subfigure}
\hfill
\begin{subfigure}{0.32\textwidth}
    \centering
    \includegraphics[scale=1.4, page=25]{case13.pdf}
    \subcaption{}
    \label{fig:algorithm-case13.3-init-4}
\end{subfigure}
\caption{Illustrations for Case~13.3 (a) if~$v^*_1=w_{2}$, (b) if~$v^*_1$ belongs to a non-trivial pesky block~$B$ that is a descendant of~$B_P$ and whose parent is~$w_1$, and (c) if~$v^*_1$ belongs to a trivial block~$B$ that is a descendant of~$B_P$ and whose parent is~$w_1$.}
\label{fig:algorithm-case13.3-part-I}
\end{figure}
    \item Third, suppose that~$v^*_1=z_{2}$ (see \cref{fig:algorithm-case13.3-part-II}). It follows that~$z_1$ is transparent. Since we are not in Case~13.1, we have that~$z_2$ is transparent, as well. This implies that the edge~$w_2z_2$ exists and that the cycle~$w_1w_2z_2$ delimits an internal face of~$G$. Note that~$z_2\in \mathcal I_P$, by construction, and hence~$z_3\notin \mathcal I_P$. Let~$w_2z_2u^*_1$ be the cycle delimiting an internal face of~$G$ with~$u^*_1\neq w_1$. Since~$z_2$ is transparent, we have that either~$u^*_1=z_3$ or that~$u^*_1$ belongs to a trivial block~$B$ that is a descendant of~$B_P$, whose parent is~$w_2$, and such that~$\ell_B=z_2$. We discuss these two cases.

    Suppose first that~$u^*_1=z_3$ and let~$w_2z_3y^*_1$ be the cycle delimiting an internal face of~$G$ with~$y^*_1\neq z_2$ (see \cref{fig:algorithm-case13.3-init-3.1.1}). By \cref{le:case13}, we have that~$y^*_1$ does not belong to a block~$B'$ that is a descendant of~$B_P$, whose parent is~$w_2$, and such that~$\ell_{B'}=z_3$. Also,~$y^*_1\neq z_4$, as otherwise~$z_3$ would have degree~$3$ in~$G$. Hence, we have that~$y^*_1=w_3$, then we charge~$z_1$ to~$w_1$ and~$z_2$, we charge~$z_3$ to~$w_2$ and~$w_3$, and we proceed with~$w_3z_3$ as an active edge. Note that~$k>3$, since~$\mathcal C^{3,3}$ does not contain inside or on its boundary all the vertices of any block that is a descendant of~$B_P$. For the same reason,  (Y2) is vacuously satisfied. Property (Y1) is satisfied since~$z_1$ and~$z_3$ have each been charged to two vertices in~$\mathcal I_P^{3,3}$ and each vertex in~$\mathcal I_P^{3,3}$ has been charged with one vertex only. Also, Property (Y3) is satisfied since~$z_3$ is not in~$\mathcal I_P^{3,3}$, by construction.

    Suppose next that~$u^*_1$ belongs to a trivial block~$B$ that is a descendant of~$B_P$, whose parent is~$w_2$, and such that~$\ell_B=z_2$ (see \cref{fig:algorithm-case13.3-init-3.2}). Then~$r_B=z_3$ and, by \cref{le:case13}, the edge~$w_3z_3$ exists and the cycle~$w_2w_3z_3$ delimits an internal face of~$G$. We charge~$z_1$ to~$w_1$ and~$z_2$ and we charge~$z_3$ to~$w_2$ and to~$u^*_1$. If~$k=3$, we are done.  Otherwise, we proceed with~$w_3z_3$ as an active edge. Property~(Y1) is satisfied since~$z_1$ and~$z_3$ have each been charged to two vertices in~$\mathcal I_P^{3,3}$ and each vertex in~$\mathcal I_P^{3,3}$ has been charged with one vertex only. Property~(Y2) is satisfied by setting~$\phi_1=w_3$. Also, Property (Y3) is satisfied since~$z_3$ is not in~$\mathcal I_P^{3,3}$.

\begin{figure}[tb]
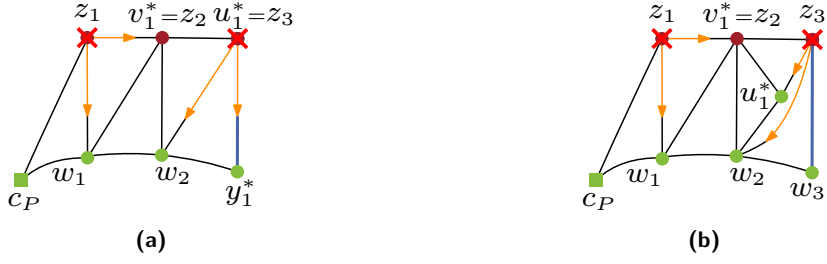

\centering
\begin{subfigure}{0.48\textwidth}
    \centering
    \includegraphics[scale=1.4, page=22]{case13.pdf}
    \subcaption{}
    \label{fig:algorithm-case13.3-init-3.1.1}
\end{subfigure}
\hfill
\begin{subfigure}{0.48\textwidth}
    \centering
    \includegraphics[scale=1.4, page=24]{case13.pdf}
    \subcaption{}
    \label{fig:algorithm-case13.3-init-3.2}
\end{subfigure}
\caption{Illustrations for Case~13.3 when~$v^*_1=z_{2}$ (a) if~$u^*_1=z_3$, and (b) if~$u^*_1$ belongs to a trivial block~$B$ that is a descendant of~$B_P$ and whose parent is~$w_2$.}
\label{fig:algorithm-case13.3-part-II}
\end{figure}


    \item Finally, suppose that~$v^*_1$ belongs to a trivial block~$B$ that is a descendant of~$B_P$, whose parent is~$w_1$, and such that~$\ell_B=z_1$ (see \cref{fig:algorithm-case13.3-init-4}). Note that~$r_B=z_2$ and that~$z_1$ is transparent. Since we are not in Case~13.1,~$z_2$ is transparent, as well. By~\cref{le:case13}, the edge~$w_2z_2$ exists and the cycle~$w_1w_2z_2$ delimits an internal face of~$G$. Since~$z_2$ is transparent, the edge~$w_2z_3$ exists and the cycle~$w_2w_3z_3$ delimits an internal face of~$G$. Let~$w_2z_3y^*_1$ be the cycle delimiting an internal face of~$G$ with~$y^*_1\neq z_2$. By \cref{le:case13}, we have that~$y^*_1$ does not belong to a block~$B$ that is a descendant of~$B_P$, whose parent is~$w_2$, and such that~$\ell_{B}=z_3$. Also,~$y^*_1\neq z_4$, as otherwise~$z_3$ would have degree~$3$ in~$G$. Hence,~$y^*_1=w_3$. We charge~$z_1$ to~$w_1$ and~$v^*_1$, and we charge~$z_3$ to~$z_2$ and~$w_2$. If~$k=3$, we are done. Otherwise, we proceed with~$w_3z_3$ as an active edge. Property (Y1) is satisfied since~$z_1$ and~$z_3$ have each been charged to two vertices in~$\mathcal I_P^{3,3}$ and each vertex in~$\mathcal I_P^{3,3}$ has been charged with one vertex only. Property~(Y2) is satisfied by setting~$\phi_1=w_3$. Also, Property (Y3) is satisfied since~$z_3$ is not in~$\mathcal I_P^{3,3}$.

\end{itemize}


This concludes the initialization. The algorithm now proceeds iteratively, by means of a case distinction which is the same as in Case~13.1, except for three differences. First, whenever in Case~13.1 we set the second free vertex~$\phi_2$ after encountering a block~$B$ descendant of~$B_P$, here we set the first free vertex~$\phi_1$ (which in Case~13.1 was already set in the initialization). This guarantees the satisfaction of Property~(Y2) throughout the algorithm. Second, in Case~13.1(II), in which~$v^*=z_k$, we conclude the algorithm by charging~$z_k$ to~$\phi_1$ and~$\phi_2$. Here, we do not have the second free vertex~$\phi_2$, however we can exploit the fact that~$z_k$ is opaque. Indeed, this implies that~$i<s$ and thus we can conclude the algorithm by charging~$z_k$ to~$\phi_1$ and~$w_s$. Third, in Case~13.1(VI), if~$j+h+1=k$ we conclude the algorithm by charging~$z_k$ to~$\phi_1$ and to~$d_{i+h}$. Here,  we have not necessarily set the first free vertex~$\phi_1$, however we can exploit the fact that~$z_k$ is opaque. Indeed, this implies that~$i+h<s$ and thus we can conclude the algorithm by charging~$z_k$ to~$w_s$ and to~$d_{i+h}$.

In {\bf Case~13.4}, we have that Cases 13.1 and 13.2 do not apply, and that~$z_1$ is opaque. Then, for~$j=k-1,\dots,2$, if~$z_j$ is a transparent vertex, it is in~$\mathcal I_P$ if~$z_{j+1}\notin \mathcal I_P$, and it is not in~$\mathcal I_P$ if~$z_{j+1}\in \mathcal I_P$. We have that~$\mathcal I_P$ satisfies Property~A. The proof is symmetric to the one of Case~13.3.

If we are in neither of Cases 13.1, 13.2, 13.3, and 13.4, then we are in {\bf Case~13.5}, and~$z_1$,~$z_2$,~$z_{k-1}$, and~$z_k$ are all transparent. Let~$a$ and~$b$ be the maximum and minimum indices, respectively, such that~$z_1,\dots,z_a$ and~$z_b,\dots,z_k$ are all transparent. Note that~$a\geq 2$ and~$b\leq k-1$; also,~$a=k$ (and~$b=1$) is possible. 

We first get rid of the special case~$a=k$ (and~$b=1$). If~$k$ is odd, we insert in~$\mathcal I_P$ all the vertices~$z_j$ with~$j$ even. Then~$\mathcal I_P$ satisfies Property~A. The proof that~$G[\mathcal I_P\cup\{c_P\}]$ is an outerplane graph is very similar to the one of Case~13.1. In particular, that at most one of any two consecutive transparent vertices~$z_j$ and~$z_{j+1}$ belongs to~$\mathcal I_P\cup\{c_P\}$ is true by construction. Also, the number of vertices in~$\mathcal I_P$ is equal to~$\frac{k-1}{2}$ (the number of vertices~$z_2,z_4,\dots,z_{k-1}$), plus~$k-1$ (the number of vertices in~$B_P$ different from~$c_P$), plus~$n_P-2k$ (the number of vertices not in~$B_P$ in blocks descendant of~$B_P$); note that~$n_P-2k\geq 1$, since a (trivial) block that is descendant of~$B_P$ exists. Hence, we have~$|\mathcal I_P|=\frac{k-1}{2}+(k-1)+(n_P-2k)\geq \frac{2}{3}(n_P-1)$, where the inequality is satisfied because~$n_P\geq 2k+1$ and~$k\geq 3$. If~$k$ is even, we insert in~$\mathcal I_P$ all the vertices~$z_j$ with~$j$ even, except for~$z_k$. Then~$\mathcal I_P$ satisfies Property~B. The proof that~$G[\mathcal I_P\cup\{c_P,\ell_P\}]$ is an outerplane graph is again very similar to the one of Case~13.1. Also,~$|\mathcal I_P|=(\frac{k}{2}-1)+(k-1)+n_P-2k$, where the first term accounts for the number of vertices~$z_2,z_4,\dots,z_{k-2}$. Hence, we have~$|\mathcal I_P|\geq \frac{2}{3}(n_P-2)$, given that~$n_P\geq 2k+1$ and~$k\geq 2$.

We can now assume~$a\neq k$, which implies that~$a<b-1$. We distinguish three cases.

In {\em Case 13.5.1}, the length of the sequence~$z_b,\dots,z_k$ is even. Then we decide which transparent vertices are in~$\mathcal I_P$ as follows:~$z_2$ is not in~$\mathcal I_P$ and, for~$j=3,\dots,k-1$, if~$z_j$ is a transparent vertex, it is in~$\mathcal I_P$ if~$z_{j-1}\notin \mathcal I_P$, and it is not in~$\mathcal I_P$ if~$z_{j-1}\in \mathcal I_P$. We have that~$\mathcal I_P$ satisfies Property~B. The proof that~$G[\mathcal I_P\cup\{c_P,\ell_P\}]$ is an outerplane graph is very similar to the one of Case~13.1; while there we have~$z_2\notin \mathcal I_P$ because~$z_2$ is opaque, here we have~$z_2\notin \mathcal I_P$ by construction. In order to prove that~$|\mathcal I_P|\geq 2(n_P-2)/3$ we again charge each vertex of~$G_P$ not in~$\mathcal I_P$ and different from~$c_P$ and~$\ell_P$ to two vertices in~$\mathcal I_P$, so that each vertex in~$\mathcal I_P$ is charged with at most one vertex, and we again maintain an active edge~$w_iz_j$, for which~$\mathcal C^{i,j}$ and~$\mathcal I_P^{i,j}$ are defined as in Case~13.1, which satisfies the following properties:
\begin{enumerate}[(Z1)]
    \item All the vertices among~$z_2,\dots,z_j$ that are not in~$\mathcal I_P^{i,j}$ have been charged to two vertices in~$\mathcal I_P^{i,j}$, so that each vertex in~$\mathcal I_P^{i,j}$ has been charged with at most one vertex.
    \item We have that~$z_j$ is not in~$\mathcal I_P^{i,j}$.
\end{enumerate}

Note that here we do not even need to maintain a first free vertex~$\phi_1$.

Since~$z_1$ is transparent, the edge~$w_1z_2$ exists and the cycle~$w_1z_1z_2$ either delimits an internal face of~$G$, or contains inside a trivial block~$B$ that is a descendant of~$B_P$, whose parent is~$w_1$, and such that~$\ell_{B}=z_1$ and~$r_{B}=z_2$. Also, the edge~$w_2z_2$ exists and the cycle~$w_1w_2z_2$ delimits an internal face of~$G$; this comes from \cref{le:case13} if~$w_1z_1z_2$ contains inside a trivial block~$B$ as above, and it comes from \cref{le:case13} and from the fact that we are not in Case~2 if~$w_1z_1z_2$ delimits an internal face of~$G$. Indeed, if~$w_1z_1z_2$ delimits an internal face of~$G$, let~$w_1z_2v_1^*$ denote the cycle delimiting an internal face of~$G$ with~$v_1^*\neq z_1$. Then \cref{le:case13} excludes that~$v_1^*$ belongs to a block descendant of~$B_P$, and the fact that we are not in Case~2 excludes that~$v_1^*=z_3$, as~$z_2$ would have degree~$3$ in~$G$. Thus, we can charge~$z_2$ to~$w_1$ and~$w_2$ and initialize the algorithm for the charging scheme with~$w_2z_2$ as an active edge.  Properties (Z1) and (Z2) are obviously satisfied. Note that, if a trivial block~$B$ inside the cycle~$w_1z_1z_2$ exists, then its vertex different from~$w_1$ remains uncharged. 

The main iteration of the charging algorithm then proceeds as in Case~13.1, with cases that we denote as 13.5(I)--13.5(VI) and that correspond to Cases 13.1(I)--13.1(VI), respectively. However, when the algorithm encounters~$z_{b-1}$, it stops and concludes the charging as described in the following. Let~$w_m$ and~$w_{m+1}$ be the two neighbors in~$B_P$ of~$z_b$. Since~$b$ is the smallest index such that~$z_b,\dots,z_k$ are all transparent and since~$a<b-1$, we have that~$z_{b-1}$ exists and is opaque. Consider the step of the algorithm's execution in which~$z_{b-1}$ is first encountered. Let~$w_iz_j$ be the active edge before that step. Hence,~$z_{b-1}$ is not inside or on the boundary of~$\mathcal C^{i,j}$, but it is inside or on the boundary of~$\mathcal C^{i',j'}$ if~$w_{i'}z_{j'}$ denotes the active edge after~$w_iz_j$. Let~$w_iz_jv^*$ be the cycle delimiting an internal face of~$G$ that does not lie inside~$\mathcal C^{i,j}$.
\begin{itemize}
    \item The algorithm cannot be in Case 13.5(I), as in this case a vertex~$v^*=w_{i+1}$ is encountered. 
    \item The algorithm cannot be in Case 13.5(II), as in this case the vertex~$v^*=z_k$ is encountered, and we have~$b-1<k$.
    \item The algorithm might be in Case 13.5(III). Indeed, we might have~$v^*=z_{j+1}=z_{b-1}$, where~$z_{j+1}w_iw_{i+1}$ and~$z_{j+1}w_{i+1}w_{i+2}$ are cycles delimiting internal faces of~$G$, as in \cref{fig:algorithm-case13.1-case3.1}, and~$i+2\leq m$. Then we conclude the algorithm by charging~$z_{b-1}$ to~$w_{i+1}$ and~$w_{i+2}$, and by charging~$z_{b+j}$ to~$z_{b+j-1}$ and~$w_{m+j}$, for~$j=1,3,5,\dots,k-b$. It is also possible that the algorithm first encounters~$z_{b-1}$ in Case 13.5(III) if~$z_{j+1}w_iw_{i+1}$ is a cycle delimiting an internal face of~$G$, and if the vertex~$v^\circ\neq w_i$ such that the cycle~$w_{i+1}z_{j+1}v^\circ$ delimits an internal face of~$G$ belongs to a non-trivial pesky block~$B$, as in \cref{fig:algorithm-case13.1-case3.2}. In this case, since~$z_b$ is transparent and all the vertices of the cage path of~$B$ are opaque,~$z_{b-1}$ might be the right cage vertex~$r_B$ of~$B$. Then the vertices of the cage path of~$B$, including~$z_{b-1}$, can be charged to the vertices of~$B$ and we conclude the algorithm by charging~$z_{b+j}$ to~$z_{b+j-1}$ and~$w_{m+j}$, for~$j=1,3,5,\dots,k-b$. 
    \item The algorithm might be in Case 13.5(IV). Indeed, we might have~$v^*=z_{j+1}$ and~$z_{j+2}=z_{b-1}$, where~$z_{j+1}\in \mathcal I_P$, where~$z_{j+1}w_iw_{i+1}$ and~$z_{j+1}z_{j+2}w_{i+1}$ are cycles of~$G$, as in \cref{fig:algorithm-case13.1-case4.2,fig:algorithm-case13.1-case4.3}, with~$i+1\leq m$. Then we conclude the algorithm by charging~$z_{j+2}$ to~$w_{i+1}$ and~$z_{j+1}$, and by charging~$z_{b+j}$ to~$z_{b+j-1}$ and~$w_{m+j}$, for~$j=1,3,5,\dots,k-b$.
    \item The algorithm might be in Case 13.5(V). Indeed,~$v^*$ might belong to a non-trivial pesky block~$B$, as in \cref{fig:algorithm-case13.1-part-IV}, with~$z_{b-1}=r_B$. Then the vertices of the cage path of~$B$, including~$z_{b-1}$ and excluding~$\ell_B$, can be charged to the vertices of~$B$ different from~$w_i$ and we conclude the algorithm by charging~$z_{b+j}$ to~$z_{b+j-1}$ and~$w_{m+j}$, for~$j=1,3,5,\dots,k-b$. 
    \item Finally, the algorithm might be in Case 13.5(VI). Recall that, in this case,~$v^*$ belongs to a trivial block~$B$ that is a descendant of~$B_P$, whose parent is~$w_i$, and such that~$\ell_B=z_j$, as in \cref{fig:algorithm-case13.1-part-V}. This results in a sequence~$B_i,\dots,B_{i+h}$ of trivial blocks descendants of~$B_P$ such that, for~$l=0,1\dots,h$, the block~$B_{i+l}$ has~$w_{i+l}$ as parent, has~$d_{i+l}$ as vertex different from~$w_{i+l}$, has~$\ell_{B_{i+l}}=z_{j+l}$, and has~$r_{B_{i+l}}=z_{j+l+1}$.  Since the vertices~$z_{j+1},\dots,z_{j+h}$ are opaque, we might have~$z_{b-1}=z_{j+h}$ or~$z_{b-1}=z_{j+h+1}$. In the former (latter) case, for~$l=1,\dots,h$ (resp.\ for~$l=1,\dots,h+1$), we charge~$z_{j+l}$ to~$w_{i+l}$ and to~$d_{i+l-1}$, and we conclude the algorithm by charging~$z_{b+j}$ to~$z_{b+j-1}$ and~$w_{m+j}$, for~$j=1,3,5,\dots,k-b$. The algorithm might also first encounter~$z_{b-1}$ in Case 13.5(VI) if the cycle~$w_{i+h}w_{i+h+1}z_{j+h+1}$ delimits an internal face of~$G$ and the cycle~$w_{i+h+1}z_{j+h+1}u^*$ delimiting an internal face of~$G$ with~$u^*\neq w_{i+h}$ has~$u^*$ in a non-trivial pesky block~$B'$ that is a descendant of~$B_P$, whose parent is~$w_{i+h+1}$, such that~$\ell_{B'}=z_{j+h+1}$, as in \cref{fig:algorithm-case13.1-case6.5}, and such that~$r_{B'}=z_{b-1}$. Then the vertices of the cage path of~$B'$ can be charged to the vertices of~$B'$, and we conclude the algorithm by charging~$z_{b+j}$ to~$z_{b+j-1}$ and~$w_{m+j}$, for~$j=1,3,5,\dots,k-b$. 
\end{itemize}


In {\em Case 13.5.2}, the length of the sequence~$z_1,\dots,z_a$ is even. This case can be handled symmetrically to Case 13.5.1. 

Finally, in {\em Case 13.5.3}, we have that~$a$ and~$k-b+1$ are both odd. We let~$z_2,z_4,\dots,z_{a-1}$ be in~$\mathcal I_P$ (and~$z_3,z_5,\dots,z_a$ not be in~$\mathcal I_P$). Then, for~$j=a+2,\dots,k-1$, if~$z_j$ is a transparent vertex, it is in~$\mathcal I_P$ if~$z_{j-1}\notin \mathcal I_P$, and it is not in~$\mathcal I_P$ if~$z_{j-1}\in \mathcal I_P$. We have that~$\mathcal I_P$ satisfies Property~C. The proof that~$G[\mathcal I_P\cup\{c_P,r_P\}]$ is an outerplane graph is again very similar to the one of the previous cases. Note that~$z_{k-1}\notin \mathcal I_P$, given that~$k-b+1$ is odd. In order to prove that~$|\mathcal I_P|\geq 2(n_P-2)/3$ we again charge each vertex of~$G_P$ not in~$\mathcal I_P$ and different from~$c_P$ and~$r_P$ to two vertices in~$\mathcal I_P$, so that each vertex in~$\mathcal I_P$ is charged with at most one vertex, and we maintain an active edge~$w_iz_j$, which satisfies Properties (Z1) and (Z2). Note that, since~$a\geq 2$ and~$a$ is odd, we have~$a\geq 3$. Since~$z_1$,~$z_2$, and~$z_3$ are transparent, the cycles~$w_1z_1z_2$,~$w_1w_2z_2$,~$w_2z_2z_3$, and~$w_2w_3z_3$ exist; the second and fourth cycles delimit internal faces of~$G$, while each of the first and third cycles either delimits an internal face of~$G$ or contains inside a trivial block descendant of~$B_P$. Regardless, we can charge~$z_1$ to~$w_1$ and~$z_2$, and~$z_3$ to~$w_2$ and~$w_3$, and initialize the algorithm for the charging scheme with~$w_3z_3$ as an active edge. Properties (Z1) and (Z2) are obviously satisfied. 

The algorithm then proceeds exactly as in Case~13.5.1, except for the fact that, when it encounters and charges $z_{b-1}$ to two vertices in $\mathcal I_P$, it terminates by charging~$z_{b+j}$ to~$z_{b+j-1}$ and~$w_{i+j}$, for~$j=1,3,5,\dots,k-b-1$, rather than for~$j=1,3,5,\dots,k-b$; note that $z_k=r_P$ does not need to be charged.
\end{proof}


By employing~\cref{le:final-case}, we can now apply induction in the same way as described after the proof of~\cref{le:cushy}, with~$\mathcal I_P$,~$c_P$,~$\ell_P$,~$r_P$,~$G_P$ in place of~$\mathcal I_B$,~$c_B$,~$\ell_B$,~$r_B$,~$G_B$, respectively, so to find a good set for~$G$. This concludes the proof of~\cref{th:main}.  

\section{Conclusions} \label{se:conclusions}

In this paper, we proved that every~$n$-vertex~$2$-outerplane graph has a set with at least~$2n/3$ vertices that induces an outerplane graph. A natural goal for future research is to prove analogous results for graph classes broader than the one of the~$2$-outerplane graphs. In particular, our techniques could be useful for proving that every~$n$-vertex~$k$-outerplane graph has a set with at least~$cn$ vertices that induces an outerplane graph, for some (small)~$k\geq 3$ and some constant~$c>11/21$. We recall that proving that  every~$n$-vertex plane graph has a set with at least~$cn$ vertices that induces an outerplane graph, for some~$c>3/5$, would improve the best known lower bound for the notorious Albertson and Berman's conjecture on the size of an induced forest that one is guaranteed to find in a planar graph~\cite{albertson1979conjecture}.

\bibliography{bibliography}

\end{document}